\numberwithin{equation}{section}
\title[Data Assimilation and Statistical Solutions]{Downscaling data assimilation algorithm with applications to statistical solutions of the Navier-Stokes equations}
\date{January 03, 2018}
\author[A. Biswas]{Animikh Biswas}
\address{\textnormal{(Animikh Biswas)} Department of Mathematics and Statistics\\
University of Maryland Baltimore County\\ Baltimore, MD 21250, USA.}
\email[A. Biswas] {abiswas@umbc.edu}
\author[C. Foias]{Ciprian Foias}
\address{\textnormal{(Ciprian Foias)} Department of Mathematics\\
Texas A\&M University\\ College Station, TX 77843, USA.}
\email[C. Foias] {foias@math.tamu.edu}
\author[C. F. Mondaini]{Cecilia F. Mondaini}
\address{\textnormal{(Cecilia F. Mondaini)} Department of Mathematics\\
Texas A\&M University\\ College Station, TX 77843, USA.}
\email[C. F. Mondaini] {cfmondaini@gmail.com}
\author[E. S. Titi]{Edriss S. Titi}
\address{\textnormal{(Edriss S. Titi)} Department of Mathematics\\
Texas A\&M University\\ College Station, TX 77843, USA. {\bf ALSO}, Department of Computer Science and Applied Mathematics, Weizmann Institute of Science, Rehovot 76100, Israel.}
\email[E. S. Titi] {titi@math.tamu.edu, edriss.titi@weizmann.ac.il}
\theoremstyle{plain}
\newtheorem{thm}{Theorem}[section]
\newtheorem{lem}{Lemma}[section]
\newtheorem{prop}{Proposition}[section]
\newtheorem{cor}{Corollary}[section]
\newtheorem{defs}{Definition}[section]
\theoremstyle{definition}
\newtheorem{rmk}{Remark}[section]
\newcommand{\cal}{\mathcal }
\newcommand{\comments}[1]{}
\newcommand{\locH}{{\mathrm{loc}_H}}
\newcommand{\R}{\mathbb R}
\newcommand{\nn}{\nonumber}
\newcommand{\D}{\displaystyle }
\newcommand{\dt}{{\D\frac{d}{dt}}}
\newcommand{\ra}{\rightarrow}
\newcommand{\lra}{\longrightarrow}
\newcommand{\N}{\mathbb N}
\newcommand{\be}{\begin{equation}}
\newcommand{\ee}{\end{equation}}
\newcommand{\bes}{\begin{equation*}}
\newcommand{\ees}{\end{equation*}}
\newcommand{\bs}{\boldsymbol}
\newcommand{\Om}{\Omega}
\newcommand{\ka}{\kappa}
\newcommand{\bu}{{\mathbf u}}
\newcommand{\bv}{{\mathbf v}}
\newcommand{\bw}{{\mathbf w}}
\newcommand{\bx}{{\mathbf x}}
\newcommand{\by}{{\mathbf y}}
\newcommand{\tw}{\widetilde{\bw}}
\newcommand{\tv}{\widetilde{\bv}}
\newcommand{\fT}{{\mathfrak T}}
\newcommand{\fTA}{{\mathfrak T}_{\cal A}}
\newcommand{\fTB}{{\mathfrak T}_{\textnormal{b}}}
\newcommand{\Cloc}{C_{\textnormal{loc}}}
\newcommand{\obv}{\overline{\bv}}
\newcommand{\obw}{\overline{\bw}}
\newcommand{\rd}{{\text{\rm d}}}
\newcommand{\f}{\mathbf{f}}
\newcommand{\g}{\mathbf{g}}
\newcommand{\mA}{\mathcal{A}}
\newcommand{\mB}{\mathcal{B}}
\newcommand{\mD}{\mathcal{D}}
\newcommand{\mE}{\mathcal{E}}
\newcommand{\mJ}{\mathcal{J}}
\newcommand{\mM}{\mathcal{M}}
\newcommand{\mN}{\mathcal{N}}
\newcommand{\mP}{\mathcal{P}}
\newcommand{\mV}{\mathcal{V}}
\newcommand{\mS}{\mathcal{S}}
\newcommand{\bea}{\begin{eqnarray}}
\newcommand{\eea}{\end{eqnarray}}
\newcommand{\beas}{\begin{eqnarray*}}
\newcommand{\eeas}{\end{eqnarray*}}
\newcommand{\mK}{\mathcal{K}}
\newcommand{\Cb}{C_{\textnormal{b}}}
\newcommand{\Lloc}{L_{\textnormal{loc}}}
\newcommand{\Lb}{L_{\textnormal{b}}}
\newcommand{\Lip}{\textnormal{Lip}}
\newcommand{\Ps}{\mathbb{P}_\sigma}
\newcommand{\per}{\textnormal{per}}
\newcommand{\bphi}{\boldsymbol{\varphi}}
\newcommand{\fTBH}{\fT_{\textnormal{b,loc}_{H}}}
\newcommand{\fTBV}{\fT_{\textnormal{b,loc}_{V}}}
\DeclareMathOperator{\Exp}{e}
\def\intav#1{\mathchoice
          {\mathop{\vrule width 6pt height 3 pt depth -2.5pt
                  \kern -9pt \intop}\nolimits_{\kern -6pt#1}}%
          {\mathop{\vrule width 5pt height 3 pt depth -2.6pt
                  \kern -6pt \intop}\nolimits_{#1}}%
          {\mathop{\vrule width 5pt height 3 pt depth -2.6pt
                  \kern -6pt \intop}\nolimits_{#1}}%
          {\mathop{\vrule width 5pt height 3 pt depth -2.6pt
                  \kern -6pt \intop}\nolimits_{#1}}}
\newcommand{\charfn}[1]{{\raisebox{1.2pt}{\mbox{$\chi
_{\kern-1pt\lower3pt\hbox{{$\scriptstyle{#1}$}}}$}}}}
\newcounter{enumaux}
\newcounter{commcount}
\begin{document}

\begin{abstract}
Based on a previously introduced downscaling data assimilation algorithm, which employs a nudging term to synchronize the coarse mesh spatial scales, we construct a {\it determining map} for recovering the full trajectories from their corresponding
coarse mesh spatial trajectories, and investigate its properties. This map is then used to develop a downscaling data assimilation scheme for statistical solutions of the two-dimensional Navier-Stokes equations, where the coarse mesh spatial statistics of the system is obtained from discrete spatial measurements. As a corollary, we deduce that statistical solutions for the Navier-Stokes equations are determined by their coarse mesh spatial distributions. Notably, we present our results in the context of the Navier-Stokes equations; however, the tools are general enough to be implemented for other dissipative evolution equations.
\end{abstract}

\subjclass[2010]{35Q30, 76D06, 34A45, 34A55, 35B42, 93B52}  

\keywords{data assimilation, downscaling, nudging, feedback control, Navier-Stokes equations, determining form, determining parameters, Vishik-Fursikov statistical solutions}

\maketitle

\section{Introduction}

For a given dissipative dynamical system, which is believed to accurately describe some aspect(s) of an underlying physical reality, often the problem of forecasting using the model becomes one of {\it initialization}. More precisely, one does not have the complete data available with which to properly initialize the system. However, in many cases, this is compensated by the fact that one has access to data from measurements of the system, collected continuously (or discretely) in time, albeit on a much coarser spatial grid than the desired resolution of the forecast.  This, for instance, is the case in atmospheric sciences where, since the launch of the first weather satellites in the 1960s, weather data has been collected nearly continuously in time, which furnishes us with the knowledge of the state of the system, e.g., the velocity vector field or temperature, on a {\it coarse spatial grid of points}. The objective of data assimilation and signal synchronization is to use this low spatial resolution observational measurements, obtained (nearly) continuously in time, to accurately find the corresponding reference solution, from which future predictions can be made. This has by now  wide ranging applications in atmospheric, oceanic, medical and biological sciences (see e.g. \cite{asch}, and references therein).

Due to its ubiquity in applications, several different types of methods have been 
developed for data assimilation; see, for instance, \cite{asch, Daley1991, HMbook2012, Kalnay2003, LSZbook2015, ReichCotterbook2015} and the references therein. Our focus here is on what is known as \emph{nudging} (or \emph{Newtonian relaxation}) method. There has been some earlier work implementing various versions of the nudging method in control theory and in the context of ordinary differential equations (ODEs); see, e.g., \cite{Nijmeijer2001,Thau1973}. Moreover, there has also been attempts of extending this approach to the context of partial differential equations (PDEs); see, e.g., \cite{Anthes74, hoke}. However, a proper and rigorous framework for the nudging approach was recently developed in \cite{AzouaniOlsonTiti2014}, where the authors consider a more general setting which is valid for a broad class of infinite-dimensional dissipative PDEs and observables. Although the results in \cite{AzouaniOlsonTiti2014} are obtained for the two-dimensional incompressible Navier-Stokes equations as the reference model and under the assumption of continuous in time and error-free measurements, later works applied this method to several other dissipative dynamical systems \cite{AlbanezNussenzveigLopesTiti2016,BiswasMartinez2017,FarhatJollyTiti2015, FarhatLunasinTiti2016a, FarhatLunasinTiti2016b, FLT,MarkowichTitiTrabelsi2016}, as well as to more general situations such as discrete in time and error-contaminated measurements (\cite{BessaihOlsonTiti2015,FoiasMondainiTiti2016}). Moreover, this method has also been shown to perform remarkably well in numerical simulations \cite{AltafTitiKnioZhaoMcCabeHoteit2015, FJJT, GeshoOlsonTiti2015, HOT}. A notable feature of the  approach in \cite{AzouaniOlsonTiti2014} is that it allows one to provide explicit conditions on the relaxation (nudging) parameter and the spatial resolution of the observations, in order to guarantee convergence of the algorithm to the reference solution of the model equation(s). Their idea is based on the fact that the long-time behavior of various (infinite-dimensional) dissipative dynamical systems, is determined by only a finite number of degrees of freedom, e.g., modes, nodes or volume elements \cite{CockburnJonesTiti1995, CockburnJonesTiti1997, FoiasProdi1967, FoiasTemam1984, FoiasTiti1991}.

The algorithm in \cite{AzouaniOlsonTiti2014} can be described as follows.  Suppose $\bu$ is a solution to a physical model over a domain $\Om$, whose time evolution is governed by the equation
    \begin{align}\label{reference:sys}
    \dt \bu =F(\bu),
    \end{align}
except that the initial data $\bu_0$ has \textit{not} been provided and is thus, unknown.  Then consider the following initial value problem:
    \begin{align}\label{feedback:sys}
        \dt \bw=F(\bw)-\beta J_h(\bw-\bu), \quad \bw(0)=\bw_0,
    \end{align}
where $\bw_0$ is \textit{any} given, initial condition, $h>0$, $\beta=\beta(h)>0$ is the
``relaxation/nudging parameter", and $J_h$ is an adequate finite-rank linear ``interpolant" operator, constructed from the observed coarse scale data (see \cite{AzouaniOlsonTiti2014, CockburnJonesTiti1995, CockburnJonesTiti1997}, and references therein, for details). Due to the fact that there exists finitely many determining coarse-mesh quantities for dissipative systems (e.g., modes, nodes, volume elements \cite{CockburnJonesTiti1995, CockburnJonesTiti1997, FoiasProdi1967, FoiasTemam1984, FoiasTiti1991}), it can be shown that the solution to \eqref{feedback:sys}, corresponding to the measurements interpolated by $J_h(\bu)$, converges exponentially to the solution $\bu$ of \eqref{reference:sys} (see \cite{AzouaniOlsonTiti2014}), provided $h <<1$ and $\beta(h)>>1$ are adequately chosen.

The downscaling data assimilation equation \eqref{feedback:sys} was crucial in constructing a map $W$ in \cite{FoiasJollyKravchenkoTiti2012, FoiasJollyKravchenkoTiti2014}, which we refer to as the {\it determining map}, from a ball in the space of all continuous, bounded functions on $\R$ with values in the (finite dimensional) space $J_h H^2$ to the phase space. This was then used in \cite{FoiasJollyKravchenkoTiti2014, FJLT} to construct an associated ODE termed the {\it determining form} for the Navier-Stokes equations \cite{FoiasJollyKravchenkoTiti2012, FoiasJollyKravchenkoTiti2014, FJLT}, and  subsequently several other evolutionary equations including surface quasi-geostrophic equation, the damped, driven nonlinear Schr\"{o}dinger equation and the Korteweg-de Vries equation \cite{JollyMartinezSadigovTiti2017, JollySadigovTiti2015, JollySadigovTiti2017}.

Our objective here is two fold. First, we study in more detail this determining map, including extending its domain of definition, which allows for rougher (in time) observations. In particular, we present a simpler
proof of the existence of the determining map $W$ with much more relaxed conditions than the previous construction in \cite{FoiasJollyKravchenkoTiti2012, FoiasJollyKravchenkoTiti2014}. This will allow us to apply it to the case where the observations are noisy or are contaminated with a random error. Additionally, we establish various properties including Frech\'{e}t differentiability of the map $W$. Our second objective is to construct a downscaling data assimilation algorithm to recover the statistics generated by the NSE from random initial data from the observed spatial coarse-mesh statistics. This is formalized by using the concept of statistical solutions, either on phase-space, introduced in \cite{Foias72, Foias73, FoiasProdi1976} (see also \cite{FMRT2001,FRT2010,FRT2013}), or trajectory space, introduced in \cite{VF78, VF88}. The map $W$ plays a fundamental role in developing such downscaling data assimilation algorithm for statistical solutions. As a corollary, we deduce that the notion of determining quantities, established before for individual trajectories, extends to the case of statistical solutions. We remark that other approaches to statistical data assimilation, for instance via Bayesian analysis and Kalman filtering, can be found in \cite{BLSZ2013, BLLMCSS2013, CLMMT2016, HMbook2012, LSZbook2015, ReichCotterbook2015} and the references therein. The statistical data assimilation technique introduced here can be viewed as providing a connection between the deterministic algorithm developed in \cite{AzouaniOlsonTiti2014} and the above mentioned approaches via Bayesian analysis and Kalman filtering. This issue will be further explored in a future work.

It is worth mentioning that we demonstrate our results in the context of the Navier-Stokes equations. However, the tools are general enough that they can be implemented equally to other dissipative evolution equations, such as the Rayleigh-B\'enard convection system and other geophysical models, such as the 3D primitive equations and planetary geostrophic models.

\section{Preliminaries}\label{secPreliminaries}

In this section, we briefly recall the background material and introduce some of the notation that is needed for the results presented later in sections \ref{secDetMap} and \ref{secStatSol}. In particular, we recall the basic setup and results regarding the 2D Navier-Stokes equations in the spatial periodic case; introduce the topologies and useful operators on spaces of continuous functions; and present the notation and properties of the interpolation operators used throughout this manuscript.


\subsection{Navier-Stokes equations and its functional setting}\label{subsecNSE}

The Navier-Stokes equations for a Newtonian, homogeneous and incompressible fluid in two dimensions are given by
\be \label{nse}
 \frac{\partial \bu}{\partial t} - \nu \Delta \bu + (\bu \cdot \nabla ) \bu + \nabla p = \g, \quad \nabla \cdot \bu = 0,
\ee
where $\bu = (u_1,u_2)$ and $p$ are the unknowns and denote the velocity vector field and the pressure, respectively; while $\nu > 0$ and $\g$ are given and denote the kinematic viscosity parameter and the body forces applied to the fluid per unit mass, respectively. We denote the spatial domain by $\Omega \subset \R^2$ and the time interval by $I \subset \R$. The variables $\bu$ and $p$ are functions of $(\bx,t) \in \Omega \times I$, while, for simplicity, we assume that $\g$ is a function of $\bx \in \Omega$ only and $\g \in (L^2(\Omega))^2$. However, similar results, taking into account the necessary details, are also valid for a function $\g = \g(\bx,t)$ with $\g \in L^\infty(I;(L^2(\Omega))^2)$.

For simplicity, and in order to fix ideas, we consider system \eqref{nse} with periodic boundary conditions, with $\Omega \subset \R^2$ as the basic domain of periodicity given by $\Omega = [0,L] \times [0,L]$, $L > 0$. Moreover, we assume that $\g$ has zero spatial average, i.e.,
\[
 \int_{\Omega}  \g (\bx) \rd \bx = 0.
\]

We now   proceed to recall the basic functional setting of the NSE, a systematic development of which can be found in \cite{bookcf1988, Temambook1997, Temambook2001}.
Let $\mV$ be the space of test functions, given by
\begin{multline}
 \mV = \left\{\bs{\varphi}: \R^2 \to \R^2 \, :\, \bs{\varphi}\, \text{ is a $L$-periodic trigonometric polynomial, } \right.\\
 \left. \nabla \cdot \bs{\varphi} = 0, \int_\Omega \bs{\varphi} (\bx) \rd \bx = 0 \right\}.
\end{multline}
We denote by $H$ and $V$ the closures of $\mV$ with respect to the norms in $(L^2(\Omega))^2$ and $(H^1(\Omega))^2$, respectively. Moreover, we denote by $H'$ and $V'$ the dual spaces of $H$ and $V$, respectively. As usual, we identify $H$ with $H'$, so that $V \subseteq H \subseteq V'$ with the injections being continuous and compact, with each space being densely embedded in the following one. The duality action between $V'$ and $V'$ is denoted by $\langle \cdot, \cdot \rangle_{V',V}$.

The inner product in $H$ is given by
\[
 (\bu_1,\bu_2) = \int_\Omega \bu_1 \cdot \bu_2 \, \rd \bx \quad \forall \bu_1, \bu_2 \in H,
\]
with the corresponding norm denoted by $\|\bu\|_{L^2} = (\bu,\bu)^{1/2}$. In $V$, we consider the following inner product:
\[
 (\!( \bu_1,\bu_2 )\!) = \int_\Omega \nabla \bu_1 : \nabla \bu_2 \, \rd \bx \quad \forall \bu_1, \bu_2 \in V,
\]
where it is understood that $\nabla \bu_1 : \nabla \bu_2$ denotes the component-wise product between the tensors $\nabla \bu_1$ and $\nabla \bu_2$. The corresponding norm in $V$ is given by $\|\nabla \bu\|_{L^2} = (\!( \bu, \bu )\!)^{1/2}$. The fact that $\|\nabla \cdot \|_{L^2}$ defines a norm on $V$ follows from the Poincar\'e inequality, given in \eqref{ineqPoincare}, below.

For every subspace $\Lambda \subset (L^1(\Omega))^2$, we denote
\[
 \dot{\Lambda}_\per = \left\{ \bs{\varphi} \in \Lambda \,:\, \bs{\varphi} \mbox{ is $L$-periodic and } \int_{\Omega} \bs{\varphi}(\bx) \rd \bx = 0 \right\}.
\]

Observe that $H$ is a closed subspace of $(\dot{L}^2(\Omega))^2$.
Let $P_\sigma$ denote the Helmholtz-Leray projector, which is defined as the orthogonal projection from $(\dot{L}^2_\per(\Omega))^2$ onto $H$. Applying $P_\sigma$ to \eqref{nse}, we obtain the following equivalent functional formulation
\be\label{eqNSE}
\frac{\rd \bu}{\rd t} + \nu A \bu + B(\bu,\bu) = \f \mbox{ in } V',
\ee
where $\f = P_\sigma \g$. The bilinear operator $B: V \times V \to V'$ is defined as the continuous extension of
\[
 B(\bu,\bv) = P_\sigma [(\bu \cdot \nabla) \bv] \quad \forall \bu, \bv \in \mV,
\]
and $A: D(A) \subset V \to V'$, the Stokes operator, is the continuous extension of
\[
 A \bu = - P_\sigma \Delta \bu \quad \forall \bu \in \mV.
\]
In fact, in the case of periodic boundary conditions, we have $A = - \Delta$.

We recall that $D(A) = V \cap (\dot{H}^2_\per(\Omega))^2$ and that $A$ is a positive and self-adjoint operator with compact inverse. Therefore, the space $H$ admits an orthonormal basis $\{\bs \phi_j\}_{j=1}^\infty$ of eigenfunctions of $A$ corresponding to a non-decreasing sequence of eigenvalues $\{\lambda_j\}_{j=1}^\infty$, with $\lambda_1:=\ka_0^2=(2\pi/L)^2$.

For each $N \in \N$, we consider the finite-dimensional space $H_N = \textnormal{span}\{\bs{\phi_1},$ $\ldots, \bs{\phi_N}\}$ and denote by $P_N$ the orthogonal projector of $H$ onto $H_N$.

We now recall some useful inequalities and identities. First, the Poincar\'e inequality, given by
\be\label{ineqPoincare}
 \kappa_0 \|\bu\|_{L^2} \leq \|\nabla \bu\|_{L^2} \quad \forall \bu \in V.
\ee
In two dimensions, the Ladyzhenskaya inequality is given by
\be\label{ineqLadyzhenskaya}
 \|\bu\|_{L^4} \leq c_L \|\bu\|_{L^2}^{1/2} \|\nabla \bu\|_{L^2}^{1/2} \quad \forall \bu \in (\dot{H}^1_\per(\Omega))^2,
\ee
and the Br\'ezis-Gallouet inequality,
\be\label{ineqBrezisGallouet}
 \|\bu\|_{L^\infty} \leq c_B \|\nabla \bu\|_{L^2} \left[1 + \log \left( \frac{\|A\bu\|_{L^2}}{\kappa_0 \|\nabla \bu\|_{L^2}} \right) \right]^{1/2} \quad \forall \bu \in D(A),
\ee
where $c_L$ and $c_B$ are nondimensional (scale invariant) constants and $\|\cdot\|_{L^4}$ and $\|\cdot\|_{L^\infty}$ denote the usual norms of the Lebesgue spaces $(L^4(\Omega))^2$ and $(L^\infty(\Omega))^2$.

The bilinear operator $B$ satisfies the following orthogonality property:
\[
 \langle B(\bu_1,\bu_2),\bu_3 \rangle_{V',V} = - \langle B(\bu_1,\bu_3),\bu_2 \rangle_{V',V} \quad \forall \bu_1, \bu_2, \bu_3 \in V.
\]
Moreover, in the case of periodic boundary conditions, we additionally have
\be \label{em}
 (B(\bu,\bu),A\bu) = 0 \quad \forall \bu \in D(A).
\ee

From Ladyzhenskaya inequality, \eqref{ineqLadyzhenskaya}, we obtain that
\be\label{estNonlTermC}
 \|B(\bu,\bv)\|_{L^2} \leq c_L^2 \|\bu\|_{L^2}^{1/2} \|\nabla\bu\|_{L^2}^{1/2} \|\nabla \bv\|_{L^2}^{1/2} \|A\bv\|_{L^2}^{1/2} \quad \forall \bu \in V, \bv \in D(A).
\ee
From Br\'ezis-Gallouet inequality, \eqref{ineqBrezisGallouet}, it follows that
\be\label{estNonlTermA}
 |(B(\bu_1,\bu_2),\bu_3)| \leq c_B \|\nabla \bu_1\|_{L^2} \|\nabla \bu_2\|_{L^2} \|\bu_3\|_{L^2} \left[1 + \log \left(\frac{\|A\bu_1\|_{L^2}}{\kappa_0 \|\nabla \bu_1\|_{L^2}} \right)\right]^{1/2}
\ee
for all $\bu_1 \in D(A), \bu_2 \in V, \bu_3 \in H$.
Moreover, the following logarithmic inequality was proved in \cite[Lemma 1.2, (c)]{Titi1987}:
\be\label{estNonlTermB}
 |(B(\bu_1, \bu_2),A\bu_3)| \leq c_T \|\nabla \bu_1\|_{L^2} \|\nabla \bu_2\|_{L^2} \|A\bu_3\|_{L^2}  \left[ 1 + \log \left(\frac{\|A\bu_2\|_{L^2}}{\kappa_0 \|\nabla\bu_2\|_{L^2}} \right) \right]^{1/2}
\ee
for all $\bu_1 \in V, \bu_2 \in D(A), \bu_3 \in D(A)$, with $c_T$ being a nondimensional constant.

It is well-known that, given $\bu_0 \in H$, there exists a unique solution $\bu$ of \eqref{eqNSE} on $(0,\infty)$ such that $\bu(0) = \bu_0$ and
\be\label{propssolNSE}
\bu \in C([0,\infty);H) \cap \Lloc^2([0,\infty);V)\ \mbox{and}\ \dt \bu \in L^2_{loc}([0,\infty); V').
\ee
Moreover, we also have $\bu \in C((0,\infty);D(A))$ (see, e.g., \cite[Theorem 12.1]{bookcf1988}). Therefore, equation \eqref{eqNSE} has an associated semigroup $\{S(t)\}_{t\geq 0}$, where, for each $t \geq 0$, $S(t): H \to H$ is the mapping given by
\be\label{defSt}
 S(t)\bu_0 = \bu(t),
\ee
with $\bu$ being the unique solution of \eqref{eqNSE} on $[0,\infty)$ satisfying $\bu(0) = \bu_0$ and \eqref{propssolNSE}.

Recall that a bounded set $\mB \subset H$ is called \emph{absorbing} with respect to $\{S(t)\}_{t \geq 0}$ if, for any bounded subset $B \subset H$, there exists a time $T = T(B)$ such that $S(t)B \subset \mB$ for all $t \geq T$. The existence of a bounded absorbing set for \eqref{eqNSE} is a well-known result. Then, the global attractor $\mA$ of \eqref{eqNSE} is defined as  the set satisfying any of the equivalent conditions given below.
\begin{enumerate}
\item Let $\mB \subset H$ be a bounded absorbing set with respect to $\{S(t)\}_{t \geq 0}$.
Then, the global attractor $\mA$ is given by
 \[
 \mA = \bigcap_{t\geq0} S(t) \mB.
\]

\item $\mA$ is the largest compact subset of $H$ which is invariant under the action of the semigroup $\{S(t)\}_{t\geq 0}$, i.e., $S(t) \mA = \mA$ for all $t \geq 0$.
\item $\mA$ is the minimal set that attracts all bounded sets.
\item $\mA$ is the set of all points in $H$ through which there exists a globally bounded trajectory $\bu(t), t \in \R$ with $\sup_{t \in \R} \|\bu(t)\|_{L^2} < \infty $.
\end{enumerate}

Also, recall the definition of the (dimensionless) Grashof number, given by
\be\label{defG}
 G = \frac{\|\f\|_{L^2}}{(\nu \kappa_0)^2}.
\ee

The following bounds hold in the global attractor $\mA$:
\be\label{boundmAH1}
 \|\nabla \bu\|_{L^2} \leq \nu \kappa_0 G \quad \forall \bu \in \mA,
\ee
and
\be\label{boundmADA}
 \|A\bu\|_{L^2} \leq c_2  \nu \kappa_0^2 (G + c_L^{-2})^3 \quad \forall \bu \in \mA,
\ee
where $c_2 = 2137 c_L^4$, with $c_L$ being the constant from \eqref{ineqLadyzhenskaya}. The proof of \eqref{boundmAH1} can be found in any of the references listed above (\cite{bookcf1988, Temambook1997, Temambook2001}) and the proof of \eqref{boundmADA} is given in \cite[Lemma 4.4]{FoiasJollyLanRupamYangZhang2015}.

\subsection{Spaces of continuous functions}\label{subsecSpacContFunc}

Given an interval $I \subset \R$  (we allow for the possibility that $I=\R$)
and a Banach space $Z$, we denote by $C(I;Z)$ the space of all continuous functions on $I$ with values in $Z$. Moreover, we denote by $\Cb(I;Z)$ the subset of $C(I;Z)$ consisting of bounded functions.

For every $t \in I$, we define the evaluation operator $\mE_t: C(I;Z) \to Z$, given by
\be
 \mE_t u = u(t) \quad \forall u \in C(I;Z).
\ee

For every subinterval $\tilde{I} \subset I$, we define the restriction operator $\mE_{\tilde{I}}: C(I;Z) \to C(\tilde{I};Z)$, as
\be
 \mE_{\tilde{I}} u(t) = u(t) \quad \forall t \in \tilde{I}.
\ee

Moreover, for every $I \subset \R$ and $\sigma \in \R$ such that $t + \sigma \in I$ for all $t \in I$, we define the translation operator $\tau_\sigma: C(I;Z) \to C(I;Z)$, given by
\be
(\tau_\sigma u)(t) = u(t + \sigma) \quad \forall t \in I, \quad \forall u \in C(I;Z).
\ee

\comments{
A useful topology on $C(I,Z)$ is the \emph{topology of uniform convergence on compact subsets} (see, e.g., \cite{Kelley1975}), which is defined as the topology generated by the subbase of neighborhoods of the form
\[
 \mN(K,O) =\{ u \in C(I,Z) \,:\, u(t) \in O \;\; \forall t \in K \},
\]
where $K \subset I$ is a compact subset and $O \subset Z$ is a neighborhood of the origin. Notably, a sequence $\{u_n\}_{n\in \mathbb{N}}$ in $C(I,Z)$ converges to $u \in C(I,Z)$ with respect to this topology if, and only if, for every compact subset $K \subset I$,
\[
 \sup_{t \in K} \|u_n(t) - u(t)\|_Z \to 0 \mbox{ as } n \to \infty,
\]
where $\|\cdot\|_Z$ denotes the norm in $Z$.

From now on, we use the notation $\Cloc(I,Z)$ to denote the space $C(I,Z)$ endowed with the topology of uniform convergence on compact subsets.
}

Also, for every $R > 0$, we denote by $B_Z(R)$ the closed ball in the Banach space $Z$ centered at $0$ with radius $R$.

\subsection{Interpolant Operators}

We recall an approach introduced in \cite{AzouaniOlsonTiti2014,AzouaniTiti2014} for dealing with several types of observables through a general class of interpolant operators. These operators are bounded, linear and of finite rank, and are required to satisfy an approximation of identity-type condition.

We consider two types of such operators. First, we say that $J: (\dot{H}^1(\Omega))^2 \to (\dot{L}^2(\Omega))^2$ is a \emph{Type I interpolant operator} if it satisfies
\be\label{type1}
 \|\bphi - J\bphi\|_{L^2} \leq c_1 h \|\nabla \bphi\|_{L^2} \quad \forall \bphi \in (\dot{H}^1(\Omega))^2.
\ee
Secondly, we say that $J : (\dot{H}^2(\Omega))^2 \to (\dot{L}^2(\Omega))^2$ is a \emph{Type II interpolant operator} if it satisfies
\be\label{type2}
 \|\bphi - J\bphi\|_{L^2} \leq c_{2,1} h\|\nabla \bphi\|_{L^2} + c_{2,2} h^2 \|\Delta \bphi\|_{L^2} \quad \forall \bphi \in (\dot{H}^2(\Omega))^2.
\ee
Here, $c_1$, $c_{2,1}$ and $c_{2,2}$ are absolute constants.

The idea is that, if $\bphi$ is the (unknown) state vector of a certain reference physical system, $J\bphi$ represents a spatial interpolation of the given spatial coarse-mesh measurements of $\bphi$. Thus, $J \bphi$ is known, while $\bphi$ is unknown.


A stronger property than \eqref{type2} is given by
\be\label{type2b}
 \|\bphi - J\bphi\|_{L^2} \leq c_{2,1}' h\|\nabla \bphi\|_{L^2} + c_{2,2}' h^{3/2}\|\nabla \bphi\|_{L^2}^{1/2} \|\Delta \bphi\|_{L^2}^{1/2} \quad \forall \bphi \in (\dot{H}^2(\Omega))^2,
\ee
where $c_{2,1}'$ and $c_{2,2}'$ are again absolute constants. Indeed, notice that \eqref{type2} follows from \eqref{type2b} by applying Young's inequality to the second term on the right-hand side. This stronger property for Type II interpolant operators is needed for obtaining some of the results concerning the ensemble data assimilation algorithm presented in subsection \ref{subsecEnsDA}.

Moreover, in sections \ref{secDetMap} and \ref{secStatSol}, in particular, we apply the operator $W$ (given in \eqref{defW}, below) to $J\bu(s), s \in \R$, with $\bu$ being a solution of \eqref{eqNSE} lying in the global attractor $\mA$. However, in order for this to make sense, we need to guarantee that $J\bu(s), s \in \R$ is in the domain of $W$, which is contained in a bounded subset of $\Cb(\R; (\dot{H}^1(\Omega))^2)$. With this aim, additional properties of $J$ must be assumed. Namely, in the case of a Type I interpolant, we assume in addition that $J((\dot{H}^1(\Omega))^2) \subset (\dot{H}^1(\Omega))^2$ and
that
\be\label{h1type1}
 \|\nabla(\bphi - J\bphi)\|_{L^2} \leq \tilde{c}_1 \|\nabla \bphi\|_{L^2} \quad \forall \bphi \in (\dot{H}^1(\Omega))^2),
\ee
for some absolute constant $\tilde{c}_1$. In the case of a Type II interpolant, we assume in addition that $J((\dot{H}^2(\Omega))^2) \subset (\dot{H}^1(\Omega))^2$ and
\be \label{h1type2}
  \|\nabla(\bphi - J\bphi)\|_{L^2} \leq \tilde{c}_{2,1} \|\nabla \bphi\|_{L^2} +\tilde{c}_{2,2}h \|\Delta \bphi\|_{L^2} \quad \forall \bphi \in  (\dot{H}^2(\Omega))^2,
\ee
for some absolute constants $\tilde{c}_{2,1}$ and $\tilde{c}_{2,2}$.

Examples of interpolant operators satisfying conditions \eqref{type1} and \eqref{h1type1} and, consequently, also \eqref{type2} and \eqref{h1type2}, include
\begin{enumerate}[(i)]
 \item the low-modes projector, i.e., $J = P_N$, for some $N = N(h) \in \N$;
 \item a spatial interpolation of observables given as averages over volume elements $Q_j$ of sidelength $h$, with $\Omega = \bigcup_{j=1}^{N} Q_j$ and $N = (L/h)^2$, defined as
\be\label{Jvolelts}
 (J\bphi)(\bx) = \sum_{j=1}^{N(h)} \frac{1}{|Q_j|} \int_{Q_j} \bphi(\by)\rd \by [ (\rho_\varepsilon \ast \chi_{Q_j})(\bx) - \langle \rho_\varepsilon \ast \chi_{Q_j}\rangle],
\ee
where $|Q_j| = h^2$ is the area of $Q_j$, $\chi_{Q_j}$ is the characteristic function of $Q_j$, $\rho_\varepsilon$ is a smooth mollifier with $\varepsilon = \varepsilon(h)$, and $\langle \cdot \rangle$ denotes the spatial average over $\Omega$, i.e., $\langle \psi\rangle = |\Omega|^{-1} \int_\Omega \psi \rd x$. Notably, the reason for subtracting the term $ \langle \rho_\varepsilon \ast \chi_{Q_j}\rangle$ is to obtain that $\langle J \bphi\rangle = 0$.
\end{enumerate}
Moreover, another example of interpolant operator satisfying \eqref{type2} and \eqref{h1type2} (but not necessarily \eqref{type1} and \eqref{h1type1}) is:
\begin{enumerate}[(i)]
 \setcounter{enumi}{2}
 \item a spatial interpolation of observables given as nodal values over volume elements $\{Q_j\}_{j=1}^{N}$ as above, defined as
\be\label{Jnodes}
 (J\bphi)(\bx) = \sum_{j=1}^{N(h)} \bphi(\bx_j) [ (\rho_\varepsilon \ast \chi_{Q_j})(\bx) - \langle \rho_\varepsilon \ast \chi_{Q_j}\rangle],
\ee
where $\bx_j \in Q_j$, $j = 1, \ldots, N$, are the nodal points and $\rho_\varepsilon$, $\chi_{Q_j}$ and $\langle \cdot\rangle$ are as above. In fact, one can show that such example of $J$ also satisfies the stronger property \eqref{type2b}. The proof of this fact is given in the Appendix.
\end{enumerate}

\section{The Determining Map}\label{secDetMap}

We now describe the lifting map introduced in \cite{FoiasJollyKravchenkoTiti2012, FoiasJollyKravchenkoTiti2014}, which played a crucial role in obtaining the determining form for the Navier-Stokes equations, as well as several other evolutionary equations \cite{JollyMartinezSadigovTiti2017, JollySadigovTiti2015, JollySadigovTiti2017}. In some sense, this map is proposed as a substitute, or alternative, for the notion of inertial manifold (see, e.g., \cite{bookcf1988, ConstantinFoiasNicolaenkoTemam1989, FoiasSellTemam1988, FoiasSellTiti1989, Temambook1997}), which is not known to exist for the NSE. Notably, the conditions we require for the construction of this map here are weaker than the ones considered in \cite{FoiasJollyKravchenkoTiti2014}. In particular, we do not require any condition on the time derivative of the input function $\bv = \bv(t)$ in \eqref{dataeqn}, below. This may be useful within the context of data assimilation, where $\bv(t)$ represents the observed spatial coarse-mesh data, which is usually noisy.

\subsection{The Determining Map $W$}\label{subsecDetMapW}

We start by defining the functional spaces where the domain and range of $W$ are contained, and by introducing the evolution equation which yields the definition of $W$.

First, we denote by $\Lb^2(\mathbb{R};D(A))$ the space of functions in $L^2(\mathbb{R};D(A))$ which are translation-bounded, i.e.
\[ \sup_{s\in\mathbb{R}} \int_s^{s+ \frac{1}{\nu \kappa_0^2}} \|A\bu(r)\|_{L^2}^2 \rd r < + \infty.
\]
The definition of translation-bounded functions is given, e.g., in \cite{ChepyzhovVishik2002}, with the slight difference that the upper limit of integration is written as ``$s+1$''. Here, we consider it as $s + (\nu \kappa_0^2)^{-1}$ in order to be dimensionally consistent.

Let
\[ Y = \Cb(\mathbb{R};V) \cap \Lb^2(\mathbb{R};D(A)).
\]
Notice that $Y$ is a Banach space with the norm
\[ \| \bu\|_Y = \left\{ \sup_{s\in \mathbb{R}} \frac{\|\nabla \bu(s)\|_{L^2}^2}{\nu^2 \kappa_0^2} + \sup_{s\in \mathbb{R}} \frac{1}{\nu \kappa_0^2} \int_s^{s + \frac{1}{\nu \kappa_0^2}} \|A\bu(r)\|_{L^2}^2 \rd r \right\}^{1/2}.
\]

Moreover, let $X$ be the Banach space
\[ X = \Cb(\mathbb{R}; (\dot{H}^1(\Omega))^2)
\]
equipped with the norm
\[ \| \bv\|_X = \sup_{s\in\mathbb{R}} \frac{\|\nabla \bv(s)\|_{L^2}}{\nu \kappa_0}.
\]

Given $\bv \in X$, and $\beta > 0$ a dimensionless parameter, consider the following evolution equation in functional form:
\be\label{dataeqn}
 \frac{\rd \bw}{\rd s} + \nu A \bw + B(\bw,\bw) = \f - \beta \nu \kappa_0^2 \Ps (J\bw - \bv), \quad s \in \mathbb{R},
\ee
where $\nu$ and $\f$ are the same as in \eqref{eqNSE}. Observe that \eqref{dataeqn} is not an initial value problem, but an evolution equation for all $s \in \R$.

\begin{rmk}
 Although we consider equation \eqref{dataeqn} with $\bv$ being a general element in the space $\Cb(\mathbb{R}; (\dot{H}^1(\Omega))^2)$, we emphasize that, for the specific applications we have in mind, we will restrict ourselves to the subspaces where $\bv$ takes values in the range of an interpolant operator $J$, i.e., $J((\dot{H}^1(\Omega))^2)$ in the case of a Type I interpolant, or $J((\dot{H}^2(\Omega))^2)$ in the case of a Type II interpolant.
\end{rmk}

The next proposition shows that, if $\bv \in B_X(\rho)$, for some $\rho > 0$, then, provided $\beta$ is large enough depending on the Grashoff number $G$, the parameter $h$ and $\rho$, with $h$ small enough depending on $\beta$ and $\rho$, the system \eqref{dataeqn} is well-posed.

In the following statement and in the remaining of this paper, we denote by $c$ a generic absolute constant, whose value may change from line to line.

\begin{prop}\label{propexistuniqw}
Let $J$ be either a Type I or Type II interpolant operator, i.e., satisfying either \eqref{type1} or \eqref{type2}. Let $\rho > 0$ and assume that $\beta > 0$ and $h>0$ satisfy
\be\label{condbeta}
\beta \geq c_1^*\left(\frac{G^2}{\beta} + \rho^2 \right) \log\left[c_1^* \left(\frac{G^2}{\beta} + \rho^2 \right) \right]
\ee
and
\be\label{condbetah}
\beta \kappa_0^2 h^2 \leq c_2^*.
\ee
Here, $c_1^* = c \max \{c_T^2, c_B^2\}$ and $c_2^* = c c_1^{-2}$ in case $J$ is a Type I interpolant satisfying \eqref{type1}, while $c_2^* = [c(c_{2,1}^2 + c_{2,2})]^{-1}$ in case $J$ is a Type II interpolant satisfying \eqref{type2}. Then, given $\bv \in B_X(\rho)$, there exists a unique global solution $\bw$ of \eqref{dataeqn} on $\mathbb{R}$ satisfying
\be\label{propssolw}
 \bw \in \Cb(\mathbb{R}; V) \cap \Lloc^2(\mathbb{R};D(A)), \quad \frac{\rd \bw}{\rd s} \in \Lloc^2(\mathbb{R};H).
\ee
Moreover, the following bounds hold:
\begin{enumerate}[(i)]
 \item\label{estw1} $\displaystyle \sup_{s\in \mathbb{R}} \|\nabla \bw(s)\|_{L^2}^2 \leq 2 (\nu \kappa_0)^2 \left( \frac{G^2}{\beta} + \|\bv\|_X^2 \right)$.
 \item\label{estw2}  $\displaystyle \sup_{s \in \mathbb{R}} \frac{1}{\nu \kappa_0^2} \int_s^{s+\frac{1}{\nu \kappa_0^2}} \|A\bw(r)\|_{L^2}^2 \rd r \leq 2 (1 + \beta) \left(\frac{G^2}{\beta} + \|\bv\|_X^2 \right)$.
 \setcounter{enumaux}{\value{enumi}}
\end{enumerate}
Also, consider $\bv_1, \bv_2 \in B_X(\rho)$ and let $\bw_1, \bw_2$ be the solutions of \eqref{dataeqn} on $\mathbb{R}$ corresponding to $\bv_1$ and $\bv_2$, respectively. Denote $\widetilde{\bw} = \bw_2 - \bw_1$ and $\widetilde{\bv} = \bv_2 - \bv_1$. Then,
\begin{enumerate}[(i)]
 \setcounter{enumi}{\value{enumaux}}
 \item\label{estdiffw1} $ \displaystyle \sup_{s\in \R} \|\nabla \widetilde{\bw}(s)\|_{L^2}^2 \leq 4 (\nu \kappa_0)^2 \|\widetilde{\bv}\|_X^2$.
 \item\label{estdiffw2} $ \displaystyle \sup_{s\in \R} \frac{1}{\nu \kappa_0^2} \int_s^{s + \frac{1}{\nu \kappa_0^2}} \|A\widetilde{\bw}(r)\|_{L^2}^2 \rd r \leq 4 (2 + \beta) \|\widetilde{\bv}\|_X^2$.
\end{enumerate}
In other words, the solution $\bw$ of \eqref{dataeqn} is unique and the map $W$ is locally Lipschitz and continuous in the $Y$-topology with respect to the input $\bv \in B_X(\rho)$.
\end{prop}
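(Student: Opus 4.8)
The plan is to establish the a priori bounds \eqref{estw1}--\eqref{estw2} first, to deduce from them the existence of a bounded solution on all of $\R$, and then to prove the difference bounds \eqref{estdiffw1}--\eqref{estdiffw2}, from which uniqueness and the local Lipschitz (hence $Y$-)continuity of the map $W$ follow at once by setting $\tv=\mathbf 0$. For the a priori estimates I take the $H$-inner product of \eqref{dataeqn} with $A\bw$. In the periodic case the orthogonality \eqref{em} removes the inertial term, $(B(\bw,\bw),A\bw)=0$; since $A\bw\in H$ I may drop $\Ps$ and split the feedback contribution as $(J\bw-\bv,A\bw)=(J\bw-\bw,A\bw)+\|\nabla\bw\|_{L^2}^2-(\nabla\bv,\nabla\bw)$, where the term $\|\nabla\bw\|_{L^2}^2$ supplies the coercive damping $-\mn\|\nabla\bw\|_{L^2}^2$. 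The interpolation error $(J\bw-\bw,A\bw)$ is controlled by \eqref{type1} (resp.\ \eqref{type2}) and Young's inequality, and condition \eqref{condbetah} is precisely what renders the resulting multiple of $\|\nabla\bw\|_{L^2}^2$ (resp.\ also of $\|A\bw\|_{L^2}^2$) absorbable into a fraction of the damping while reserving a fraction of the dissipation $\nu\|A\bw\|_{L^2}^2$; the forcing $(\f,A\bw)$ and the input term $\mn(\nabla\bv,\nabla\bw)$ are disposed of by Young, using $\|\nabla\bv\|_{L^2}\le\nu\ka_0\|\bv\|_X$. One is thus left with a differential inequality of the form
\[ \frac{d}{ds}\|\nabla\bw\|_{L^2}^2 + \mn\,\|\nabla\bw\|_{L^2}^2 \;\le\; 2\nu^3\ka_0^4\left( G^2 + \beta\|\bv\|_X^2\right), \]
whose bounded solutions satisfy \eqref{estw1} upon integrating $\frac{d}{ds}(\Exp^{\mn s}\|\nabla\bw\|_{L^2}^2)$ from $-\infty$ to $s$; integrating the reserved dissipation over a window of length $(\nu\ka_0^2)^{-1}$ and inserting \eqref{estw1} then yields \eqref{estw2}.

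For existence on all of $\R$ I solve, for each $n\in\N$, the initial value problem for \eqref{dataeqn} on $[-n,\infty)$ with $\bw(-n)=\mathbf 0$. Because the feedback $-\mn\Ps(J\bw-\bv)$ is a bounded, finite-rank linear term together with the given datum $\bv$, this is a lower-order perturbation of the $2$D Navier--Stokes equations and is globally well posed --- a Galerkin scheme together with the a priori estimates above precludes blow-up --- producing a unique $\bw^{(n)}$ with the regularity \eqref{propssolw}. Repeating the energy computation, now carrying the transient factor $\Exp^{-\mn(s+n)}$, shows that \eqref{estw1}--\eqref{estw2} hold for $\bw^{(n)}$ uniformly in $n$ on every fixed interval once $n$ is large. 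An Aubin--Lions compactness argument with a diagonal extraction then furnishes a subsequence converging in $\Cloc(\R;H)$ and weakly in $\Lloc^2(\R;D(A))$ to a solution $\bw$ of \eqref{dataeqn} on all of $\R$ that inherits \eqref{propssolw} and the bounds \eqref{estw1}--\eqref{estw2}.

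Finally, for \eqref{estdiffw1}--\eqref{estdiffw2} I set $\tw=\bw_2-\bw_1$, $\tv=\bv_2-\bv_1$, subtract the two instances of \eqref{dataeqn}, write $B(\bw_2,\bw_2)-B(\bw_1,\bw_1)=B(\tw,\bw_2)+B(\bw_1,\tw)$, and test with $A\tw$. The damping, interpolation and input terms are treated exactly as above (there is now no forcing $\f$, only the $\tv$ term, which accounts for the clean right-hand sides of \eqref{estdiffw1}--\eqref{estdiffw2}). The delicate point is the nonlinearity, and the key is an asymmetric choice of estimates: I bound $(B(\tw,\bw_2),A\tw)$ by the logarithmic inequality \eqref{estNonlTermB}, which puts the logarithm on $\bw_2$, and I bound $(B(\bw_1,\tw),A\tw)$ by \eqref{estNonlTermA} with $A\tw\in H$ playing the role of the low-order ($H$-)argument, which puts the logarithm on $\bw_1$. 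This keeps every logarithm on the already-controlled solutions $\bw_1,\bw_2$, avoiding an uncontrollable factor $\log(\|A\tw\|_{L^2}/(\ka_0\|\nabla\tw\|_{L^2}))$, and it is exactly why $c_1^*$ depends on both $c_T^2$ and $c_B^2$. After Young's inequality absorbs $\|A\tw\|_{L^2}^2$ into $\nu\|A\tw\|_{L^2}^2$, the coefficient multiplying $\|\nabla\tw\|_{L^2}^2$ has the form $\tfrac{c}{\nu}\big(c_T^2\|\nabla\bw_2\|_{L^2}^2+c_B^2\|\nabla\bw_1\|_{L^2}^2\big)\,[1+\log(\cdots)]$; bounding $\|\nabla\bw_j\|_{L^2}^2$ through \eqref{estw1} and controlling the time average of the logarithm through \eqref{estw2} and Jensen's inequality, condition \eqref{condbeta} forces this coefficient to be, on average over windows of length $(\nu\ka_0^2)^{-1}$, at most $\tfrac12\mn$. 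A generalized (uniform) Gronwall argument then delivers net exponential damping, hence \eqref{estdiffw1}, and a further integration gives \eqref{estdiffw2}. Setting $\tv=\mathbf 0$ forces $\tw=\mathbf 0$, which is the asserted uniqueness, while \eqref{estdiffw1}--\eqref{estdiffw2} are precisely the local Lipschitz continuity of $W$ into $Y$.

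I expect the main obstacle to be this last logarithmic Gronwall step: making rigorous that condition \eqref{condbeta}, featuring the logarithm of a mean-enstrophy--type quantity, dominates the time-averaged nonlinear coefficient, and then converting this ``on-average'' lower bound on the damping rate into a genuine pointwise bound on $\sup_{s\in\R}\|\nabla\tw(s)\|_{L^2}^2$. Securing the a priori bounds uniformly in the initial time --- so that the two-sided solution on all of $\R$, and not merely a forward initial value problem, is controlled --- is the other point requiring care, though it follows directly from the strict damping built into \eqref{dataeqn}.
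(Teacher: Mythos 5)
Your a priori bounds and your existence scheme (zero data at time $-n$, uniform estimates, compactness plus diagonal extraction) are essentially the paper's argument, which runs a Galerkin scheme from $-N(\nu\ka_0^2)^{-1}$ with zero initial value and passes to the limit; that part is fine. The gap is in the difference estimate, and it is the heart of the proposition. You bound $(B(\tw,\bw_2),A\tw)$ and $(B(\bw_1,\tw),A\tw)$ so that the logarithms land on $\bw_1,\bw_2$, precisely in order to avoid the factor $\log\left(\|A\tw\|_{L^2}/(\ka_0\|\nabla\tw\|_{L^2})\right)$, which you declare uncontrollable. This is backwards. The logarithm on $\tw$ is exactly the one that \emph{can} be controlled, pointwise in time: writing the dissipation as $\frac{\nu}{2}\|A\tw\|_{L^2}^2=\frac{\nu\ka_0^2}{2}\,r\,\|\nabla\tw\|_{L^2}^2$ with $r=\|A\tw\|_{L^2}^2/(\ka_0^2\|\nabla\tw\|_{L^2}^2)\ge 1$, the elementary inequality \eqref{elemineq} shows that the dissipation absorbs the logarithmic term up to the additive constant $\zeta\log\zeta$, with $\zeta$ proportional to $c_{BT}^2\left(G^2/\beta+\rho^2\right)$, and condition \eqref{condbeta} is tailored exactly to dominate that constant. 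This is the paper's (and \cite{AzouaniOlsonTiti2014}'s) key device; it produces a pointwise differential inequality with constant damping rate $\frac{\beta\nu\ka_0^2}{2}$, and plain Gronwall then gives \eqref{estdiffw1} with the stated constant.

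Your placement instead produces the coefficient $\|\nabla\bw_j\|_{L^2}^2\left[1+\log\left(\|A\bw_j\|_{L^2}/(\ka_0\|\nabla\bw_j\|_{L^2})\right)\right]$, which is not pointwise bounded (only $\|A\bw_j\|_{L^2}$ is $L^2$-in-time controlled), and your averaging step does not close as sketched. Direct Jensen on the logarithm requires a bound on the time average of $\|A\bw_j\|_{L^2}/\|\nabla\bw_j\|_{L^2}$, for which no lower bound on $\|\nabla\bw_j\|_{L^2}$ is available; if you instead use $\sup_{y\ge1}(1+\log y)/y^{2}=1$ to trade the whole product for $\|A\bw_j\|_{L^2}^2/\ka_0^2$, then \eqref{estw2} makes the window average of your coefficient of order $c\,c_{BT}^2(1+\beta)\left(G^2/\beta+\rho^2\right)$, and \eqref{condbeta} does not force this below $\beta/2$ unless $G^2/\beta+\rho^2$ is small, which the hypotheses do not require. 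Moreover, even granting a repaired bound over unit windows, a damping rate that is only controlled on average yields Gronwall constants $\Exp^{C}$ with $C$ comparable to the integral of the bad coefficient over one window, i.e.\ of order $\beta$: you would obtain \eqref{estdiffw1}--\eqref{estdiffw2} with an extra factor $\Exp^{c\beta}$ rather than the clean constants $4$ and $4(2+\beta)$, and hence a Lipschitz constant for $W$ exponentially large in $\beta$ instead of $2(3+\beta)^{1/2}$ --- constants which Theorem \ref{thmpropsW}, the Fr\'echet differentiability proof, and Section \ref{secStatSol} actually rely on. The repair is simple: keep your decomposition and the testing with $A\tw$, but apply \eqref{estNonlTermA} and \eqref{estNonlTermB} so that the logarithm falls on $\tw$, and close the estimate pointwise with \eqref{elemineq} and \eqref{condbeta}, as in the paper.
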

\begin{proof}
The existence of a global solution of \eqref{dataeqn} satisfying the properties in \eqref{propssolw} follows by deriving the estimates in \eqref{estw1} and \eqref{estw2} for
the unique solution $\bw_N$ of the Galerkin system \eqref{dataeqn} on the interval
$[-N (\nu \ka_0^2)^{-1}, \infty)$, subject to the initial value $\bw_N(-N (\nu \ka_0^2)^{-1})=0$, and then extracting a subsequence using the diagonal process and then passing to the limit, as in \cite{FoiasJollyKravchenkoTiti2014}.
  Since the details are given in \cite{FoiasJollyKravchenkoTiti2014}, we will omit them here, and restrict ourselves to proving estimates \eqref{estw1} and \eqref{estw2} formally. Moreover, the uniqueness of such solution will follow from the estimate in \eqref{estdiffw1}. Also, we show only the case of an interpolant operator $J$ satisfying \eqref{type2}, i.e., when $J$ is a Type II interpolant, since the case when $J$ is a Type I interpolant, satisfying \eqref{type1}, follows analogously.

Taking the inner product of \eqref{dataeqn} with $A \bw$ in $H$ and applying Cauchy-Schwarz, Young's inequality and property \eqref{type2} of $J$, we obtain that
\begin{multline}
   \frac12 \frac{\rd}{\rd s} \|\nabla \bw\|_{L^2}^2 + \nu \|A\bw\|_{L^2}^2
   = - \beta \nu \kappa_0^2 \|\nabla \bw\|_{L^2}^2 + \beta \nu \kappa_0^2 (\bv, A \bw) \\
   + (\f, A\bw) -  \beta \nu \kappa_0^2 (J \bw - \bw, A \bw)\\
   \leq - \beta \nu \kappa_0^2 \|\nabla \bw\|_{L^2}^2 + \beta \nu \kappa_0^2\|\nabla \bv\|_{L^2}^2 + \frac{\beta \nu \kappa_0^2}{4} \|\nabla \bw\|_{L^2}^2 + \frac{\|\f\|_{L^2}^2}{\nu} + \frac{\nu}{4} \|A\bw\|_{L^2}^2 \\
   + \frac{\beta \nu \kappa_0^2}{4} \|\nabla\bw\|_{L^2}^2 + (c_{2,1}^2 + c_{2,2}) \beta \nu \kappa_0^2 h^2 \|A\bw\|_{L^2}^2.
\end{multline}

Using hypothesis \eqref{condbetah} to estimate the last term in the right-hand side of the inequality above and rearranging the terms, we obtain
\be\label{eqwdotAw}
\frac{\rd}{\rd s} \|\nabla\bw\|_{L^2}^2 + \nu\|A\bw\|_{L^2}^2 + \beta \nu \kappa_0^2 \|\nabla\bw\|_{L^2}^2 \leq 2\beta \nu \kappa_0^2 \|\nabla \bv\|_{L^2}^2 + 2\frac{\|\f\|_{L^2}^2}{\nu}.
\ee

Integrating with respect to $s$ on the interval $[\sigma,t]$, we have
\be\label{estwVa}
 \|\nabla \bw(t)\|_{L^2}^2 \leq \| \nabla \bw(\sigma)\|_{L^2}^2 \Exp^{- \beta \nu \kappa_0^2 (t - \sigma)} + 2 \nu^2 \kappa_0^2 \left( \frac{G^2}{\beta} + \|\bv\|_X^2 \right).
\ee

Since $\bw \in \Cb(\mathbb{R}; V)$, taking the limit $\sigma \to -\infty$ in \eqref{estwVa} yields
\be\label{estwVb}
 \|\nabla \bw(t)\|_{L^2}^2 \leq 2 \nu^2 \kappa_0^2 \left( \frac{G^2}{\beta} + \|\bv\|_X^2 \right) \quad \forall t \in \mathbb{R},
\ee
which proves \eqref{estw1}. We stress again that this is a formal proof to establish the explicit bounds. In particular, the rigorous proof, using the Galerkin procedure, does not use the assumption that $\bw \in C_b(\R; V)$; but it establishes these estimates for the solution of the galerkin system $\bw_N(s)$, for $s \in [-N (\nu \ka-0^2)^{-1},\infty)$. The limit solution $W$ will enjoy these estimates for all $s \in \R$.

Now, ignoring the third term on the left-hand side of \eqref{eqwdotAw}, integrating both sides of the resulting inequality with respect to $s$ on the interval $[t, t+ (\nu \kappa_0^2)^{-1}]$ and using \eqref{estwVb}, we obtain
\be
 \frac{1}{\nu \kappa_0^2} \int_t^{t + \frac{1}{\nu \kappa_0^2}} \|A\bw(s)\|_{L^2}^2 \rd s \leq 2 (1 + \beta) \left( \frac{G^2}{\beta} + \|\bv\|_X^2 \right) \quad \forall t \in \mathbb{R},
\ee
proving \eqref{estw2}.

In order to prove \eqref{estdiffw1} and \eqref{estdiffw2}, let $\bw_1$ and $\bw_2$ be two solutions of \eqref{dataeqn} corresponding to functions $\bv_1$ and $\bv_2$ in $X$, respectively. Let $\widetilde{\bw} = \bw_2 - \bw_1$ and $\widetilde{\bv} = \bv_2 - \bv_1$. Then, $\widetilde{\bw}$ satisfies
\be\label{eqwtilde}
 \frac{\rd \tw}{\rd s} + \nu A\tw + B(\tw,\bw_1) + B(\bw_1,\tw) + B(\tw,\tw) = -\beta \nu \kappa_0^2 \Ps (J \tw - \tv).
\ee

Taking the inner product of \eqref{eqwtilde} with $A\tw$ in $H$ and applying Cauchy-Schwarz, Young's inequality and property \eqref{type2} of $J$, we have
\begin{multline}\label{ineqwtilde}
\frac{1}{2} \frac{\rd}{\rd s} \|\nabla \tw\|_{L^2}^2 + \nu \|A\tw\|_{L^2}^2 \leq -\beta \nu \kappa_0^2 \|\nabla \tw\|_{L^2}^2 \\
+ 2 \max\left\{ |(B(\tw,\bw_1),A\tw)|, |(B(\bw_1,\tw),A\tw)|\right\} + \beta \nu \kappa_0^2 (\tv, A\tw) \\
- \beta \nu \kappa_0^2 (J \tw - \tw , A \tw)\\
\leq -\beta \nu \kappa_0^2 \|\nabla \tw\|_{L^2}^2 + 2 \max\left\{ |(B(\tw,\bw_1),A\tw)|, |(B(\bw_1,\tw),A\tw)|\right\} \\
+ \frac{\beta \nu \kappa_0^2}{4} \|\nabla \tw\|_{L^2}^2 
+ \beta \nu \kappa_0^2 \|\nabla \tv\|_{L^2}^2
+ \frac{\beta \nu \kappa_0^2}{4} \|\nabla \tw\|_{L^2}^2 + \beta \nu \kappa_0^2 (c_{2,1}^2 + c_{2,2}) h^2 \|A\tw\|_{L^2}^2
\end{multline}

Notice that, due to \eqref{estNonlTermA} and \eqref{estNonlTermB}, we have
\begin{multline}\label{nonlinLip}
\max\left\{ |(B(\tw,\bw_1),A\tw)|, |(B(\bw_1,\tw),A\tw)|\right\} \\
 \leq c_{BT} \|\nabla\tw\|_{L^2}\|\nabla\bw_1\|_{L^2}\|A\tw\|_{L^2}\left[\log \left(\frac{e}{\kappa_0}\frac{\|A\tw\|_{L^2}}{\|\nabla \tw\|_{L^2}}\right)\right]^{1/2} \\
 \leq \frac{\nu}{6}\|A\tw\|_{L^2}^2 + \frac{3}{2} \frac{c_{BT}^2}{\nu} \|\nabla \bw_1\|_{L^2}^2 \|\nabla\tw\|_{L^2}^2  \left[1+ \log\left(\frac{\|A\tw\|_{L^2}^2}{\kappa_0^2\|\nabla\tw\|_{L^2}^2}\right)\right],
\end{multline}
where $c_{BT}^2 = \max\{c_T^2,c_B^2\}$.

Thus, using \eqref{nonlinLip} and \eqref{condbetah} in \eqref{ineqwtilde} and rearranging the terms, we obtain that
 \begin{multline}\label{wtildeineq}
 \frac{\rd}{\rd s} \|\nabla \tw\|_{L^2}^2 + \frac{\nu}{2}\|A\tw\|_{L^2}^2\\
 +
\left\{ \beta\nu \kappa_0^2 + \frac{\nu\kappa_0^2}{2}  \frac{\|A\tw\|_{L^2}^2}{\kappa_0^2\|\nabla\tw\|_{L^2}^2}-
  \frac{6 c_{BT}^2}{\nu} \|\nabla\bw_1\|_{L^2}^2 \left[1+ \log\left(\frac{\|A\tw\|_{L^2}^2}{\kappa_0^2\|\nabla\tw\|_{L^2}^2}\right)\right]\right\}\|\nabla \tw\|_{L^2}^2\\
 \leq 2 \beta \nu \kappa_0^2 \|\nabla\tilde{\bv}\|_{L^2}^2.
\end{multline}

We will now need the following elementary inequality (see \cite{AzouaniOlsonTiti2014}), namely,
\be\label{elemineq}
 \min_{r \ge 1}\{r - \zeta (1+ \log r)\}\ge - \zeta \log \zeta , \quad \zeta >0.
\ee

Since $\bw_1$ satisfies \eqref{estwVb} and $\bv_1 \in B_X(\rho)$, we have
\be\label{boundw1H1}
\frac{\|\nabla\bw_1(t)\|_{L^2}^2}{(\nu \kappa_0)^2} \leq K(G,\rho,\beta) := 2 \left( \frac{G^2}{\beta} + \rho^2 \right).
\ee

Thus, from \eqref{elemineq} and \eqref{boundw1H1}, we obtain
\begin{multline}\label{estfromelemineq}
 \frac{\nu\kappa_0^2}{2}  \frac{\|A\tw\|_{L^2}^2}{\kappa_0^2\|\nabla \tw\|_{L^2}^2} - \frac{6 c_{BT}^2}{\nu} \|\nabla \bw_1\|_{L^2}^2 \left(1+ \log\left(\frac{\|A\tw\|_{L^2}^2}{\kappa_0^2\|\nabla\tw\|_{L^2}^2}\right)\right) \\
 \geq \frac{\nu\kappa_0^2}{2} \left(  \frac{\|A\tw\|_{L^2}^2}{\kappa_0^2\|\nabla\tw\|_{L^2}^2}  - 12 c_{BT}^2 K \left(1+ \log\left(\frac{\|A\tw\|_{L^2}^2}{\kappa_0^2\|\nabla\tw\|_{L^2}^2}\right)\right) \right) \\
 \geq - 6 c_{BT}^2 \nu \kappa_0^2 K \log(12 c_{BT}^2 K).
\end{multline}

Using \eqref{estfromelemineq} in \eqref{wtildeineq} and hypothesis \eqref{condbeta}, we then have
\be\label{wtildeineq2}
\frac{\rd}{\rd s} \|\nabla\tw\|_{L^2}^2 + \frac{\nu}{2}\|A\tw\|_{L^2}^2 + \frac{\beta\nu \kappa_0^2}{2} \|\nabla\tw\|_{L^2}^2 \leq 2 \beta \nu \kappa_0^2 \|\nabla\tv\|_{L^2}^2.
\ee

Integrating both sides of \eqref{wtildeineq2} with respect to $s$ on the interval $[\sigma,t]$, it follows that
\be\label{estdiffw3a}
\|\nabla \tw(t)\|_{L^2}^2 \leq \|\nabla \tw(\sigma)\|_{L^2}^2 \Exp^{-\frac{\beta \nu \kappa_0^2}{2}(t-\sigma)} + 4 \sup_{s\in \mathbb{R}} \|\nabla \tv(s)\|_{L^2}^2.
\ee
Since $\bw_1,\bw_2 \in \Cb(\mathbb{R};V)$, taking the limit $\sigma \to -\infty$, we obtain
\be\label{estdiffw3}
 \|\nabla\tw(t)\|_{L^2}^2 \leq 4 (\nu \kappa_0)^2 \|\tv\|_X^2 \quad \forall t \in \mathbb{R},
\ee
which proves \eqref{estdiffw1}.

Now, integrating \eqref{wtildeineq2} with respect to $s$ on the interval $[t, t+ (\nu \kappa_0^2)^{-1}]$, we obtain
\[
\frac{1}{\nu \kappa_0^2} \int_t^{t + \frac{1}{\nu \kappa_0^2}} \|A\widetilde{\bw}(s)\|_{L^2}^2 \rd s \leq 4 (2 + \beta) \|\widetilde{\bv}\|_X^2 \quad \forall t \in \mathbb{R},
\]
proving \eqref{estdiffw2}.

\end{proof}

\begin{rmk}
 Notice that condition \eqref{condbeta} on $\beta$ is not used for proving items \eqref{estw1} and \eqref{estw2} of Proposition \ref{propexistuniqw}, but only for proving items \eqref{estdiffw1} and \eqref{estdiffw2}. In particular, this means that the existence of solutions to equation \eqref{dataeqn} can be proved by only assuming condition \eqref{condbetah}, but for proving uniqueness of such solutions, and the local Lipschitz continuity of the map $W$ with respect to the input $\bv$,
 one needs to assume, in addition, \eqref{condbeta}.
\end{rmk}


Now, the result of Proposition \ref{propexistuniqw} allows us to give the following definition.

\begin{defs}\label{defW0}
Consider $\rho > 0$, $\beta > 0$ and $h>0$ satisfying conditions \eqref{condbeta} and \eqref{condbetah}. Then, the \textbf{determining map} $W: B_X(\rho) \to Y$, is given by
\be\label{defW}
W(\bv) = \bw,
\ee
where $\bw$ is the unique solution of \eqref{dataeqn} corresponding to $\bv \in B_X(\rho)$.
\end{defs}

\begin{rmk}
 Observe that if $\bv_1,\bv_2 \in B_X(\rho)$ are such that $P_\sigma \bv_1 = P_\sigma \bv_2$, then, from \eqref{dataeqn} and the definition of $W$, it follows that $W(\bv_1) = W(\bv_2)$. Thus, for a given $\bv \in B_X(\rho)$, $W$ is in fact determined by $P_\sigma \bv$.
\end{rmk}


\subsection{Basic properties of the determining map $W$}

The next theorem summarizes some of the properties of the determining map $W$ given in Definition \ref{defW0}. These properties are essential for obtaining the results in Section \ref{secStatSol}.

Let $J$ be an interpolant operator of Type I or Type II. For every interval $I \subset \R$, we denote by $\mJ: C(I,(\dot{H}^1(\Omega))^2) \to C(I,(\dot{L}^2(\Omega))^2)$, in case $J$ is of Type I,
or $\mJ: C(I,(\dot{H}^2(\Omega))^2) \to C(I,(\dot{L}^2(\Omega))^2)$, in case $J$ is of Type II, the linear operator defined by
\be\label{defmJ}
 (\mJ \bu)(t) = J (\bu(t)) \quad \forall t \in I.
\ee

\begin{thm}\label{thmpropsW}
 Assume the hypotheses of Proposition \ref{propexistuniqw}. Then, the mapping $W: B_X(\rho) \to Y$ defined in \eqref{defW} satisfies the following properties:
 \begin{enumerate}[(i)]
  \item\label{thmpropsWi} For every $\bv \in B_X(\rho)$, $W(\bv) \in B_Y(\sqrt{M})$, with
    \[
      M := M(G,\rho,\beta) = 2(2+\beta)\left( \frac{G^2}{\beta} + \rho^2 \right).
    \]
  \item\label{thmpropsWii} $W$ is a Lipschitz mapping from $B_X(\rho)$ to $Y$ with Lipschitz constant $2(3+\beta)^{1/2}$.
  \item\label{thmpropsWiii} Let $\bv \in B_X(\rho)$ and $\bu$ be a solution of \eqref{eqNSE} on the global attractor $\mA$ (i.e., $\bu(t) \in \mA$ for all $t \in \mathbb{R}$) such that $\|\nabla J\bu(s) - \nabla\bv(s)\|_{L^2} \to 0$, as $s \to \infty$. Then $\|\nabla[W(\bv) - \bu](s)\|_{L^2} \to 0$, as $s \to \infty$.
  \item\label{thmpropsWiv} Let $\rho>0$ and $J$ be either a Type I or a Type II interpolant. In case $J$ is a Type I interpolant, assume that it additionally satisfies
  \eqref{h1type1}, and $\rho$ satisfies
    \be\label{condrhoa}
      \rho \geq (1 + \tilde{c}_1) G,
    \ee
    where $\tilde{c}_1$ is the constant from \eqref{h1type1} .
    In case $J$ is a Type II interpolant,  assume that it also satisfies \eqref{h1type2} and
    \be\label{condrho}
      \rho \geq c_3^* \left[ G + \frac{(G + c_L^{-2})^3}{\beta^{1/2}} \right],
    \ee
    where $c_3^* = \max\{1 + \tilde{c}_{2,1}, \tilde{c}_{2,2} c_2 \sqrt{c_2^*}\}$, with $\tilde{c}_{2,1}$ and $\tilde{c}_{2,2}$ being the constants from \eqref{h1type2}, $c_2$ the constant from \eqref{boundmADA} and $c_2^*$ the constant from \eqref{condbetah}. Under these hypotheses,
    if $\bu(s)$ for $ s \in \R$ is a solution of \eqref{eqNSE}, which is a trajectory in the  global attractor $\mA$, then
    \[
     W \circ \mJ (\bu) = \bu.
    \]
    Moreover, if $\mJ \circ W (\bv) = \bv$ for some $\bv \in B_X(\rho)$, then $W(\bv)$ is a solution of \eqref{eqNSE}, and it is a trajectory in the global attractor $\mA$.
  \item\label{thmpropsWv} Let $\bv_1,\bv_2 \in B_X(\rho)$. Then, $W(\bv_1) = W(\bv_2)$ if and only if $P_\sigma (\bv_1 - \bv_2) = 0$.
  \item\label{thmpropsWvi} For every $\sigma \in \mathbb{R}$,
    \[
      W \circ \tau_\sigma( \bv) = \tau_\sigma \circ W(\bv) \quad \forall \bv \in B_X(\rho).
    \]
 \end{enumerate}
\end{thm}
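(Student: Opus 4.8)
The plan is to treat the six items in order of increasing difficulty, using Proposition \ref{propexistuniqw} as the workhorse. Items \eqref{thmpropsWi} and \eqref{thmpropsWii} are immediate bookkeeping, since the definition of the $Y$-norm is precisely the sum of the two quantities estimated in Proposition \ref{propexistuniqw}. First I would set $\bw = W(\bv)$ with $\|\bv\|_X \le \rho$ and add the bound \eqref{estw1} (divided by $\nu^2\kappa_0^2$) to \eqref{estw2}; this gives $\|W(\bv)\|_Y^2 \le [2 + 2(1+\beta)](G^2/\beta + \rho^2) = M$, proving \eqref{thmpropsWi}. Likewise, writing $\tw = W(\bv_2) - W(\bv_1)$ and adding \eqref{estdiffw1}$/(\nu^2\kappa_0^2)$ to \eqref{estdiffw2} yields $\|\tw\|_Y^2 \le [4 + 4(2+\beta)]\|\tv\|_X^2 = 4(3+\beta)\|\tv\|_X^2$, which is exactly the Lipschitz bound of \eqref{thmpropsWii}.

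Items \eqref{thmpropsWv} and \eqref{thmpropsWvi} rest on uniqueness and on the structure of \eqref{dataeqn}. For \eqref{thmpropsWv}, note that \eqref{dataeqn} depends on $\bv$ only through $P_\sigma\bv$; hence if $P_\sigma(\bv_1 - \bv_2) = 0$ the two inputs produce identical equations and, by uniqueness, identical solutions. Conversely, if $W(\bv_1) = W(\bv_2) =: \bw$, subtracting the two copies of \eqref{dataeqn} satisfied by the \emph{same} $\bw$ leaves $\beta\nu\kappa_0^2 P_\sigma(\bv_1 - \bv_2) = 0$, forcing $P_\sigma(\bv_1 - \bv_2) = 0$. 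For \eqref{thmpropsWvi}, since \eqref{dataeqn} is autonomous, $\tau_\sigma W(\bv)$ solves \eqref{dataeqn} with input $\tau_\sigma\bv$; as translation preserves the $X$-norm we have $\tau_\sigma\bv \in B_X(\rho)$, and uniqueness gives $W(\tau_\sigma\bv) = \tau_\sigma W(\bv)$.

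The substance of the theorem lies in items \eqref{thmpropsWiii} and \eqref{thmpropsWiv}. For \eqref{thmpropsWiv}, the first step is a domain check: I must verify $\mJ\bu \in B_X(\rho)$ so that $W(\mJ\bu)$ is defined. Using $\|\nabla J\bu\|_{L^2} \le \|\nabla\bu\|_{L^2} + \|\nabla(\bu - J\bu)\|_{L^2}$ together with the additional interpolation bounds \eqref{h1type1} (Type I) or \eqref{h1type2} (Type II), the attractor bounds \eqref{boundmAH1} and \eqref{boundmADA}, and the $h$-smallness \eqref{condbetah}, one finds $\|\mJ\bu\|_X$ bounded by the right-hand side of \eqref{condrhoa}, respectively \eqref{condrho}, hence $\le \rho$. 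The key observation is then that substituting $\bw = \bu$ and $\bv = \mJ\bu = J\bu$ into \eqref{dataeqn} makes the nudging term $P_\sigma(J\bu - J\bu)$ vanish, so the equation reduces to \eqref{eqNSE}, which $\bu$ satisfies; a bounded attractor trajectory lies in $\Cb(\R;V)\cap\Lloc^2(\R;D(A))$ with time derivative in $\Lloc^2(\R;H)$, so $\bu$ is admissible in the sense of \eqref{propssolw}, and uniqueness yields $W(\mJ\bu) = \bu$. The converse is dual: if $\mJ\circ W(\bv) = \bv$ then the nudging term vanishes identically, so $\bw := W(\bv)$ solves \eqref{eqNSE}; as $\bw \in \Cb(\R;V)$ it is a globally bounded solution, whence $\bw(s)\in\mA$ for all $s$ by characterization (4) in the list of equivalent definitions of $\mA$.

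The main obstacle is item \eqref{thmpropsWiii}, the asymptotic tracking. I would set $\tw = W(\bv) - \bu$, subtract \eqref{eqNSE} from \eqref{dataeqn}, and write $J\bw - \bv = J\tw + (J\bu - \bv)$ to obtain
\[
\frac{\rd\tw}{\rd s} + \nu A\tw + B(\tw,\bu) + B(\bu,\tw) + B(\tw,\tw) = -\beta\nu\kappa_0^2 P_\sigma J\tw - \beta\nu\kappa_0^2 P_\sigma(J\bu - \bv).
\]
Taking the inner product with $A\tw$ in $H$, using \eqref{em} to annihilate $B(\tw,\tw)$, splitting $J\tw = \tw + (J\tw - \tw)$ to extract the dissipative term $-\beta\nu\kappa_0^2\|\nabla\tw\|_{L^2}^2$, bounding the nonlinear terms by the logarithmic estimate exactly as in \eqref{nonlinLip} (now with $\|\nabla\bu\|_{L^2}$ controlled by \eqref{boundmAH1}), and absorbing via the elementary inequality \eqref{elemineq} and hypothesis \eqref{condbeta}, one reaches, in parallel with \eqref{wtildeineq2},
\[
\frac{\rd}{\rd s}\|\nabla\tw\|_{L^2}^2 + \frac{\beta\nu\kappa_0^2}{2}\|\nabla\tw\|_{L^2}^2 \le 2\beta\nu\kappa_0^2\|\nabla(J\bu - \bv)\|_{L^2}^2 =: g(s),
\]
where $g(s)\to0$ as $s\to\infty$ by hypothesis. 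The delicate point is passing from this differential inequality to $\|\nabla\tw(s)\|_{L^2}\to0$: I would use the integrating factor to write $\|\nabla\tw(t)\|_{L^2}^2 \le \|\nabla\tw(t_0)\|_{L^2}^2\, e^{-\alpha(t - t_0)} + \int_{t_0}^t e^{-\alpha(t-s)} g(s)\,\rd s$ with $\alpha = \beta\nu\kappa_0^2/2$, then for given $\eps>0$ fix $t_0$ so that $g \le \eps$ on $[t_0,\infty)$, bound the integral by $\eps/\alpha$, and let $t\to\infty$ so that the first (bounded, since $\tw\in\Cb(\R;V)$) term decays; as $\eps$ is arbitrary this gives the claim. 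The only genuine work beyond Proposition \ref{propexistuniqw} is re-deriving the logarithmic nonlinear estimate with the attractor trajectory $\bu$ in place of $\bw_1$ and handling the decaying, non-autonomous forcing $g$.
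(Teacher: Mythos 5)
Your items (i), (ii), (v), and (vi) coincide with the paper's proof, and your item (iv) is a legitimate minor variant: instead of letting $\sigma\to-\infty$ in the decay estimate \eqref{ineqwtildeb} as the paper does, you observe that $\bu$ itself solves \eqref{dataeqn} with input $J\bu$ (the nudging term vanishes) and is admissible in the class \eqref{propssolw}, so the uniqueness assertion of Proposition \ref{propexistuniqw} gives $W\circ\mJ(\bu)=\bu$ directly; this is slightly shorter and costs nothing, since the domain check $\mJ\bu\in B_X(\rho)$ is carried out the same way in both arguments.

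The genuine gap is in item (iii), in your choice of linearization base. You expand $B(\bw,\bw)-B(\bu,\bu)=B(\tw,\bu)+B(\bu,\tw)+B(\tw,\tw)$ and estimate the nonlinear terms with $\|\nabla\bu\|_{L^2}\le\nu\kappa_0 G$ from \eqref{boundmAH1}, so after invoking \eqref{elemineq} your absorption step requires a condition of the form $\beta\gtrsim c_{BT}^2G^2\log\left(c_{BT}^2G^2\right)$. But the only hypothesis available in item (iii) is \eqref{condbeta}, which is calibrated to the quantity $K=2(G^2/\beta+\rho^2)$ of \eqref{boundw1H1}; item (iii) does not assume \eqref{condrhoa} or \eqref{condrho}, so $\rho$ may be much smaller than $G$, in which case $K$ can be far below $G^2$ (take, e.g., $\rho=1$ and $\beta\sim G^{4/3}$ with $G$ large: \eqref{condbeta} holds, yet $\beta\ll G^2\log G^2$), and the claimed parallel with \eqref{wtildeineq2} breaks down. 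The paper avoids this by linearizing around $\bw=W(\bv)$ instead, writing $B(\bw,\bw)-B(\bu,\bu)=B(\tw,\bw)+B(\bw,\tw)-B(\tw,\tw)$, whose coefficient $\|\nabla\bw\|_{L^2}$ is controlled by \eqref{estwVb} exactly by $K$, so that \eqref{condbeta} applies verbatim, "exactly as in the proof of item \eqref{estdiffw1}" of Proposition \ref{propexistuniqw}. Replacing your decomposition by this one (the quadratic term still drops upon pairing with $A\tw$ by \eqref{em}) repairs the proof; your subsequent integrating-factor passage to the limit is correct and equivalent to the paper's $\limsup$ argument.
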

\begin{proof}
We provide only the proof for a Type II interpolant operator $J$, since the proof for a Type I interpolant follows analogously.

Items \eqref{thmpropsWi}
and \eqref{thmpropsWii} follow directly from the estimates in items \eqref{estw1}-\eqref{estdiffw2} of Proposition \ref{propexistuniqw}. In order to prove \eqref{thmpropsWiii}, let $\bv \in B_X(\rho)$ and $\bu(s)$ for $ s \in \R$ be a solution of \eqref{eqNSE}, and it is a trajectory in the global attractor $\mA$ such that $\|\nabla J\bu(s) - \nabla\bv(s)\|_{L^2} \to 0$, as $s \to \infty$. Denote $\bw = W(\bv)$, $\tw = \bw - \bu$ and $\tv = J\bu - \bv$. Subtracting \eqref{eqNSE} from \eqref{dataeqn}, we obtain
\begin{multline}
\frac{\rd \tw}{\rd s} + \nu A \tw + B(\tw,\bw) + B(\bw,\tw) - B(\tw,\tw) = -\beta \nu \kappa_0^2 P_\sigma (J \bw - \bv) \\
= -\beta \nu \kappa_0^2 \tw - \beta \nu \kappa_0^2 P_\sigma (J\tw - \tw) - \beta \nu \kappa_0^2 P_\sigma \tv.
\end{multline}
Proceeding exactly as in the proof of item \eqref{estdiffw1} of Proposition \ref{propexistuniqw}, we obtain, by using conditions \eqref{condbeta} and \eqref{condbetah}, that for every $\sigma, t \in \mathbb{R}$ with $\sigma < t$,
\be\label{ineqwtildeb}
 \|\nabla\tw (t)\|_{L^2}^2 \leq \|\nabla\tw(\sigma)\|_{L^2}^2 \Exp^{-\frac{\beta\nu\kappa_0^2}{2}(t-\sigma)} + 4 \sup_{\sigma \leq s \leq t} \|\nabla\tv(s)\|_{L^2}^2.
\ee
Taking the $\limsup$ as $t \to \infty$, we have
\[
 \limsup_{t\to \infty} \|\nabla\tw(t)\|_{L^2}^2 \leq 4 \sup_{\sigma \leq s < \infty} \|\nabla\tv(s)\|_{L^2}^2.
\]
Thus, taking the limit as $\sigma \to \infty$ and using that, by hypothesis, $\|\nabla\tv(s)\|_{L^2} \to 0$, as $s \to \infty$, we obtain
\[
 \lim_{t\to \infty} \|\nabla \tw(t)\|_{L^2} = 0,
\]
which proves \eqref{thmpropsWiii}.

In order to prove \eqref{thmpropsWiv}, let $\bu(s)$, $ s \in \R$, be a solution of \eqref{eqNSE},
which is a trajectory  on the global attractor $\mA$, and assume that $J$ satisfies \eqref{h1type2}. Thus, we have
\begin{multline}
 \|\nabla J\bu(s)\|_{L^2} \leq \|\nabla J\bu(s) - \nabla \bu(s)\|_{L^2} + \|\nabla\bu(s)\|_{L^2} \\
 \leq (1 + \tilde{c}_{2,1})\|\nabla\bu(s)\|_{L^2} + \tilde{c}_{2,2} h \|A\bu(s)\|_{L^2} \\
 \leq (1 + \tilde{c}_{2,1}) \nu \kappa_0 G + \tilde{c}_{2,2} h c_2 \nu \kappa_0^2 (G + c_L^{-2})^3 \\
 \leq (1 + \tilde{c}_{2,1}) \nu \kappa_0 G + \tilde{c}_{2,2} c_2 \sqrt{c_2^*} \frac{ \nu \kappa_0}{\beta^{1/2}} (G + c_L^{-2})^3,
\end{multline}
where in the last inequality we used hypothesis \eqref{condbetah} and \eqref{boundmADA}.
Thus,
\[
 \sup_{s\in \mathbb{R}} \frac{\|\nabla J\bu(s)\|_{L^2}}{\nu \kappa_0} \leq \max \{1 + \tilde{c}_{2,1}, \tilde{c}_{2,2} c_2 \sqrt{c_2^*} \} \left[ G + \frac{(G + c_L^{-2})^3}{\beta^{1/2}}\right].
\]
Hence, if $\rho \geq c_3^* [G + \beta^{-1/2} (G + c_L^{-2})^3]$, with $c_3^* = \max \{1 + \tilde{c}_{2,1}, \tilde{c}_{2,2} c_2 \sqrt{c_2^*} \}$, then $\|J\bu\|_X \leq \rho$, i.e., $J\bu \in B_X(\rho)$. From \eqref{ineqwtildeb}, it then follows that
\[
  \|\nabla[W \circ \mJ(\bu) - \bu](t)\|_{L^2}^2 \leq \|\nabla[W \circ \mJ(\bu) - \bu](\sigma)\|_{L^2}^2 \Exp^{-\frac{\beta\nu\kappa_0^2}{2}(t-\sigma)}
\]
Thus, taking the limit as $\sigma \to - \infty$, it follows that $W \circ \mJ (\bu) = \bu$.

Moreover, if $\mJ \circ W (\bv) = \bv$ for some $\bv \in B_X(\rho)$, then, denoting $\bw = W(\bv)$, we have $J\bw(t) -\bv(t) = 0$, for all $t \in \R$. Thus, we see from \eqref{dataeqn} that
$\bw(t)$ for all $t \in \mathbb{R}$, is a solution of \eqref{eqNSE} which is uniformly bounded with respect to the norm in $V$. Thus, $\bw(t) \in \mA$, for all $t \in \mathbb{R}$ (see the characterization of $\mA$ in section 2.1).

In order to prove \eqref{thmpropsWv}, let $\bv_1,\bv_2 \in B_X(\rho)$ and denote $\bw_1 = W(\bv_1)$, $\bw_2 = W(\bv_2)$ and $\tw = \bw_2 - \bw_1$. Then, it follows from \eqref{dataeqn} that
\be\label{eqtildev}
\beta \nu \kappa_0^2 P_\sigma \tv = \frac{\rd \tw}{\rd s} + \nu A \tw + B(\tw,\bw_2) + B(\bw_1, \tw) + \beta \nu \kappa_0^2 P_\sigma J \tw.
\ee

If $W(\bv_1) = W(\bv_2)$, i.e., $\tw \equiv 0$, then it follows from \eqref{eqtildev} and the linearity of $J$ that $P_\sigma \tv(s) = 0$, for a.e. $s \in \mathbb{R}$. But since $\bv_1,\bv_2 \in \Cb(\mathbb{R};J(\dot{H}^1(\Omega))^2)$ then, in fact, $P_\sigma \tv(s) = 0$, for every $s \in \mathbb{R}$.

On the other hand, if $P_\sigma \tv = 0$, then it follows from the uniqueness of solutions to \eqref{dataeqn} that $\tw = 0$, since $\tw=0$ is a solution of \eqref{dataeqn}. This finishes the proof of \eqref{thmpropsWv}.

In order to prove (vi), notice that, given $\bv \in B_X(\rho)$,  $\tau_\sigma \circ W(\bv)$ is a solution of \eqref{dataeqn} corresponding to $\tau_\sigma \bv$. By the uniqueness of solutions to \eqref{dataeqn}, it follows that $\tau_\sigma \circ W(\bv) = W \circ \tau_\sigma (\bv)$. This proves \eqref{thmpropsWvi}.
\end{proof}

\subsection{Fr\'echet differentiability of the determining map $W$}

In this subsection, we show the Fr\'echet differentiability property of the determining map $W$ given in Definition \ref{defW0}, and explicitly identify its derivative.
Although this property is not used in the results of section \ref{secStatSol}, we present it here due to its own importance.

First, we state a result on well-posedness of an auxiliary, linear evolution equation, whose solutions are directly related to the Fr\'echet derivative of $W$. Its proof is similar to the proof of well-posedness of \eqref{dataeqn} (see Proposition \ref{propexistuniqw}), so we omit its details.

\begin{prop}\label{propexistuniqwstar}
 Assume the hypotheses of Proposition \ref{propexistuniqw}. Let $\obv \in X$ and
 $\bw \in C_b(\R; V)$ satisfying the bound \eqref{estwVb}.
 Consider the following evolution equation on $\R $, which is the linearization of \eqref{dataeqn} around $\bw$:
 \be\label{eqwstar}
  \frac{\rd \bw^*}{\rd s} + \nu A \bw^* + B(\bw,\bw^*) + B(\bw^*,\bw) = -\beta \nu \kappa_0^2 P_\sigma (J \bw^* - \obv),
 \ee
 with $\nu > 0$ the same as in \eqref{eqNSE}. Then, equation \eqref{eqwstar} has a unique, bounded solution $\bw^*$ on $\R$ satisfying
 \be \label{freq}
   \bw^* \in \Cb(\mathbb{R}; V) \cap \Lb^2(\mathbb{R};D(A)), \quad \frac{\rd \bw^*}{\rd s} \in \Lloc^2(\mathbb{R};H).
 \ee
 Moreover, $\|\bw^*\|_Y \le 2(3+\beta)^{1/2}\|\obv\|_X$.
\end{prop}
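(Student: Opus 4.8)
The plan is to mirror the proof of Proposition \ref{propexistuniqw} almost verbatim, exploiting that \eqref{eqwstar} has exactly the same structure as the difference equation \eqref{eqwtilde}, only that it is genuinely linear in the unknown $\bw^*$ and lacks the term $B(\tw,\tw)$ (which anyway vanished via \eqref{em}). For existence I would use Galerkin approximation: for each $N \in \N$, project \eqref{eqwstar} onto $H_N$ and solve the resulting finite-dimensional system on $[-N(\nu\kappa_0^2)^{-1},\infty)$ with zero initial datum at the left endpoint. Since $\bw$ enters only as a given (time-dependent) coefficient, this is a linear non-autonomous ODE, so $\bw_N^*$ exists globally with no possibility of blow-up. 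After deriving $N$-uniform bounds one extracts a subsequence by the usual diagonal/Aubin--Lions compactness argument and passes to the limit; this step is identical to the one in Proposition \ref{propexistuniqw} and I would omit it.

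The a priori estimates come from pairing \eqref{eqwstar} with $A\bw^*$ in $H$. The terms coming from $-\beta\nu\kappa_0^2 P_\sigma(J\bw^* - \obv)$ are handled exactly as in the proof of \eqref{estw1}: one writes $J\bw^* - \obv = \bw^* + (J\bw^* - \bw^*) - \obv$ and uses \eqref{type2}, \eqref{condbetah}, Cauchy--Schwarz and Young's inequality. The two transport terms $(B(\bw,\bw^*),A\bw^*)$ and $(B(\bw^*,\bw),A\bw^*)$ are precisely the ones estimated in \eqref{nonlinLip}, with $\tw$ and $\bw_1$ there replaced by $\bw^*$ and $\bw$ here. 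The key point is that the hypothesis guarantees $\bw$ obeys the bound \eqref{estwVb}, so $\|\nabla\bw\|_{L^2}^2/(\nu\kappa_0)^2 \le K := 2(G^2/\beta + \rho^2)$ exactly as in \eqref{boundw1H1}; this is what licenses the logarithmic Gr\"onwall mechanism. Consequently the elementary inequality \eqref{elemineq} together with condition \eqref{condbeta} combine as in \eqref{estfromelemineq}--\eqref{wtildeineq2} to produce
\[
 \frac{\rd}{\rd s}\|\nabla\bw^*\|_{L^2}^2 + \frac{\nu}{2}\|A\bw^*\|_{L^2}^2 + \frac{\beta\nu\kappa_0^2}{2}\|\nabla\bw^*\|_{L^2}^2 \le 2\beta\nu\kappa_0^2\|\nabla\obv\|_{L^2}^2 .
\]

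From this inequality the bounds \eqref{estdiffw1} and \eqref{estdiffw2} hold verbatim with $\tw$ replaced by $\bw^*$ and $\tv$ by $\obv$: integrating over $[\sigma,t]$ and letting $\sigma \to -\infty$ (discarding the exponentially decaying term using $\bw^* \in \Cb(\R;V)$) gives the $V$-bound, and integrating over a window of length $(\nu\kappa_0^2)^{-1}$ gives the $D(A)$-bound. Adding the two contributions yields $\|\bw^*\|_Y^2 \le 4(3+\beta)\|\obv\|_X^2$, i.e.\ the claimed estimate $\|\bw^*\|_Y \le 2(3+\beta)^{1/2}\|\obv\|_X$. Uniqueness is then immediate from linearity: the difference of two bounded solutions solves \eqref{eqwstar} with $\obv = 0$, and the estimate just derived forces it to vanish. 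I do not anticipate a real obstacle here; the only genuinely new ingredient relative to Proposition \ref{propexistuniqw} is the global solvability of the Galerkin system, which is trivial precisely because the equation is linear. The one point deserving care is recognizing that the hypothesis ``$\bw$ satisfies \eqref{estwVb}'' is exactly what is needed to invoke \eqref{boundw1H1} and thereby run the logarithmic Gr\"onwall argument unchanged.
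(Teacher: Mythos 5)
Your proposal is correct and follows exactly the route the paper intends: the paper omits the proof of Proposition \ref{propexistuniqwstar}, stating only that it is similar to that of Proposition \ref{propexistuniqw}, and your argument is precisely that proof carried over — Galerkin existence (trivially global by linearity), the energy estimate with $A\bw^*$ mirroring \eqref{ineqwtilde}--\eqref{wtildeineq2} with $\tw\mapsto\bw^*$, $\bw_1\mapsto\bw$, the use of \eqref{estwVb}/\eqref{boundw1H1} with \eqref{elemineq} and \eqref{condbeta}, and the resulting bounds summing to $\|\bw^*\|_Y^2\le 4(3+\beta)\|\obv\|_X^2$. Your uniqueness argument via linearity (the difference solves the homogeneous equation, hence vanishes by the same estimate) is also the intended one, so there is nothing to correct.
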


From the result of Proposition \ref{propexistuniqwstar}, we conclude that, given $\rho > 0$, $\beta > 0$ and $h > 0$, satisfying conditions \eqref{condbeta} and \eqref{condbetah}, we can define a mapping $\mD: B_X(\rho) \times X \to Y$ given by
\be\label{defmD}
 \mD(\bv,\obv) = \bw^*,
\ee
where $\bw^*$ is the unique, bounded solution of \eqref{eqwstar}, satisfying \eqref{freq}, and which corresponds to $\bw = W(\bv)$, with $\bv \in B_X(\rho)$, and $\obv \in X$. Indeed, by item \eqref{estw1} of Proposition \ref{propexistuniqw}, it follows that $\bw = W(\bv) \in  C_b(\R;V)$ and it satisfies the bound \eqref{estwVb}.  Moreover, the mapping $\mD(\bv,\cdot):X \to Y$ defines a bounded linear operator from $X$ to $Y$, with operator norm bounded by $2(3+\beta)^{1/2}$.

It turns out that, for a fixed $\bv \in B_X(\rho)$, the bounded linear operator $\mD(\bv,\cdot):X \to Y$ is precisely the Fr\'echet derivative of $W$ at $\bv$, as shown in the next theorem.

\begin{thm}
  Assume the hypotheses of Proposition \ref{propexistuniqw}. Then, the mapping $W: B_X(\rho) \to Y$ is Fr\'echet differentiable and its Fr\'echet derivative at $\bv \in B_X(\rho)$, denoted by $D W (\bv): X \to Y$, is given by
  \be\label{eqFrechetderiv}
   D W (\bv) (\obv) = \mD(\bv,\obv) \quad \forall \obv \in X,
  \ee
  with $\mD(\bv,\obv)$ as defined in \eqref{defmD}.
\end{thm}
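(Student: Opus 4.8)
The plan is to show that $\mD(\bv,\cdot)$ is the Fréchet derivative of $W$ at $\bv$ by directly estimating the remainder. Fix $\bv \in B_X(\rho)$ and let $\obv \in X$ be small enough that $\bv + \obv \in B_X(\rho)$. Write $\bw = W(\bv)$, $\bw^* = \mD(\bv,\obv)$, and $\bw^{\obv} = W(\bv + \obv)$. The goal is to control the remainder $\br := \bw^{\obv} - \bw - \bw^*$ and prove that $\|\br\|_Y / \|\obv\|_X \to 0$ as $\|\obv\|_X \to 0$. The first step is to subtract the three relevant equations: equation \eqref{dataeqn} for $\bw^{\obv}$ (with input $\bv + \obv$), equation \eqref{dataeqn} for $\bw$ (with input $\bv$), and the linearized equation \eqref{eqwstar} for $\bw^*$. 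Using bilinearity of $B$, the nonlinear terms should combine so that $\br$ satisfies an evolution equation of the same structure as \eqref{eqwstar} but with a forcing term that is purely quadratic in the increment. Precisely, I expect $\br$ to solve
\be
 \frac{\rd \br}{\rd s} + \nu A \br + B(\bw,\br) + B(\br,\bw) + \beta \nu \kappa_0^2 P_\sigma J \br = -B(\tw,\tw),
\ee
where $\tw := \bw^{\obv} - \bw = W(\bv+\obv) - W(\bv)$, so that the right-hand side is a genuinely second-order quantity.

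The second step is to apply the \emph{a priori} machinery already developed in Proposition \ref{propexistuniqw}. Taking the inner product of the equation for $\br$ with $A\br$ in $H$, the left-hand side produces exactly the coercive structure that led, via the elementary inequality \eqref{elemineq} and conditions \eqref{condbeta}--\eqref{condbetah}, to an estimate of the form $\frac{\rd}{\rd s}\|\nabla \br\|_{L^2}^2 + \tfrac{\nu}{2}\|A\br\|_{L^2}^2 + \tfrac{\beta\nu\kappa_0^2}{2}\|\nabla \br\|_{L^2}^2 \le 2|(B(\tw,\tw), A\br)|$. The forcing term is handled with \eqref{estNonlTermB} (the logarithmic estimate), bounding $|(B(\tw,\tw),A\br)|$ by $\tfrac{\nu}{4}\|A\br\|_{L^2}^2$ plus a constant times $\|\nabla\tw\|_{L^2}^2\|A\tw\|_{L^2}^2 [1 + \log(\cdot)]$, after absorbing the $A\br$ term. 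Integrating from $\sigma$ to $t$, taking $\sigma \to -\infty$ using $\br \in \Cb(\R;V)$, and then integrating over the unit-length window for the $D(A)$-part, yields
\be
 \|\br\|_Y^2 \leq c(\beta) \sup_{s\in\R}\|\nabla\tw(s)\|_{L^2}^2 \cdot \sup_{s\in\R}\frac{1}{\nu\kappa_0^2}\int_s^{s+\frac{1}{\nu\kappa_0^2}}\|A\tw(r)\|_{L^2}^2\rd r,
\ee
that is, $\|\br\|_Y \leq c(\beta)\, \|\tw\|_Y^2$.

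The third and final step is to invoke the Lipschitz bound from Theorem \ref{thmpropsW}\eqref{thmpropsWii}, namely $\|\tw\|_Y = \|W(\bv+\obv) - W(\bv)\|_Y \leq 2(3+\beta)^{1/2}\|\obv\|_X$. Combining this with the remainder estimate gives $\|\br\|_Y \leq c(\beta)\,\|\obv\|_X^2$, so that $\|\br\|_Y / \|\obv\|_X \leq c(\beta)\,\|\obv\|_X \to 0$ as $\|\obv\|_X \to 0$. Since $\bw^{\obv} - \bw - \mD(\bv,\obv) = \br$ and $\mD(\bv,\cdot)$ is a bounded linear operator by Proposition \ref{propexistuniqwstar}, this is exactly the definition of Fréchet differentiability, establishing \eqref{eqFrechetderiv}.

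The main obstacle I anticipate is controlling the quadratic forcing $B(\tw,\tw)$ in the $A\br$-pairing while preserving the quadratic order in $\|\obv\|_X$. The delicate point is that the logarithmic factor in \eqref{estNonlTermB} must be handled so that no spurious growth or extra power of the increment is introduced; one must carefully track that the product $\|\nabla\tw\|_{L^2}^2 \cdot \|A\tw\|_{L^2}^2$ integrates against the translation-bounded norm of $\tw$ rather than a pointwise $D(A)$-bound (which is unavailable for $\tw$). Because the $Y$-norm only controls $\|A\tw\|_{L^2}$ in an averaged, translation-bounded sense, the integration window and the uniform-in-$s$ supremum of $\|\nabla\tw\|_{L^2}$ must be combined correctly to recover $\|\tw\|_Y^2$. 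The rest of the argument is a faithful repetition of the energy estimates already carried out in Proposition \ref{propexistuniqw}.
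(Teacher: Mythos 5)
Your proposal is correct and is essentially the paper's own proof: the same remainder decomposition $\obw - \bw^*$ (with $\obw = W(\bv+\obv)-W(\bv)$ and $\bw^* = \mD(\bv,\obv)$) satisfying the linearized equation with purely quadratic forcing $B(\obw,\obw)$, the same coercive energy estimate under \eqref{condbeta}--\eqref{condbetah}, the same splitting of the Duhamel integral into unit-length windows against the exponential kernel to exploit the translation-bounded $D(A)$ norm (the paper's estimate \eqref{estintAobw}), and the same concluding use of the Lipschitz bound from Theorem \ref{thmpropsW} to get $\|\obw - \bw^*\|_Y \leq c(\beta)\,\|\obv\|_X^2 = o(\|\obv\|_X)$. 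The only cosmetic difference is that the paper bounds the forcing via Cauchy--Schwarz and \eqref{estNonlTermC}, giving $\|B(\obw,\obw)\|_{L^2}^2 \leq c_L^4 \kappa_0^{-1}\|\nabla\obw\|_{L^2}^3\|A\obw\|_{L^2}$, rather than the logarithmic estimate \eqref{estNonlTermB} you suggest; both yield the same quartic order in $\|\obw\|_Y$ and hence the same conclusion.
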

\begin{proof}
 Assume $J$ is an interpolant operator satisfying \eqref{type2}. The proof of the case when $J$ satisfies \eqref{type1} follows analogously.

 Let $\bv, \tv \in B_X(\rho)$ and denote $\bw = W(\bv)$, $\tw = W(\tv)$, $\obw = \tw - \bw$ and $\obv = \tv - \bv$.

 In order to prove \eqref{eqFrechetderiv}, we need to show that
 \be\label{eqlittleoh}
  \|\obw - \mD(\bv,\obv)\|_Y = o(\|\obv\|_X).
 \ee

 Notice that $\obw$ satisfies
 \be\label{eqobw}
  \frac{\rd \obw}{\rd s} + \nu A \obw + B(\bw, \obw) + B(\obw,\bw) + B(\obw,\obw) = -\beta \nu \kappa_0^2 P_\sigma (J\obw - \obv).
 \ee

 Denote $\bw^* = \mD(\bv,\obv)$. Subtracting \eqref{eqwstar} from \eqref{eqobw}, we obtain that
 \begin{multline}\label{eqdiffobwbwstar}
  \frac{\rd }{\rd s}(\obw - \bw^*) + \nu A (\obw - \bw^*) + B(\bw,\obw - \bw^*) + B(\obw - \bw^*,\bw) + B(\obw,\obw) \\
  = -\beta \nu \kappa_0^2 (\obw - \bw^*) - \beta \nu \kappa_0^2 P_\sigma [J(\obw - \bw^*) - (\obw - \bw^*)].
 \end{multline}

 Taking the inner product in $H$ of \eqref{eqdiffobwbwstar} with $A(\obw - \bw^*)$ and applying Cauchy-Schwarz, Young's inequality and property \eqref{h1type2} of $J$, we obtain that
 \begin{multline}
  \frac{1}{2} \frac{\rd }{\rd s} \|\nabla\obw - \nabla\bw^*\|_{L^2}^2 + \nu \|A(\obw - \bw^*)\|_{L^2}^2 \\
  \leq 2 \max \left\{ |( B(\bw,\obw - \bw^*), A(\obw - \bw^*))|, |( B(\obw - \bw^*,\bw), A(\obw - \bw^*))| \right\}\\
  + \frac{\nu}{8} \|A(\obw - \bw^*)\|_{L^2}^2 + \frac{2}{\nu} \|B(\obw,\obw)\|_{L^2}^2 - \frac{\beta \nu \kappa_0^2}{2} \|\nabla \obw - \nabla \bw^*\|_{L^2}^2 \\
  + \beta \nu \kappa_0^2 \left(\frac{c_{2,1}^2}{2} + c_{2,2}\right) h^2 \|A(\obw - \bw^*)\|_{L^2}^2
 \end{multline}

 Now, proceeding analogously as in \eqref{nonlinLip}-\eqref{wtildeineq2} and using conditions \eqref{condbeta} and \eqref{condbetah} on $\beta$ and $h$, it follows that
 \be\label{eqdiffobwbwstar2}
  \frac{\rd}{\rd s}\|\nabla \obw - \nabla \bw^*\|_{L^2}^2 + \frac{\nu}{2} \|A(\obw - \bw)\|_{L^2}^2 + \frac{\beta \nu \kappa_0^2}{2} \|\nabla \obw - \nabla\bw^*\|_{L^2}^2 \leq \frac{4}{\nu} \|B(\obw,\obw)\|_{L^2}^2.
 \ee

 From \eqref{estNonlTermC} and Poincar\'e inequality \eqref{ineqPoincare}, we have
 \be\label{estBobwobw}
  \|B(\obw,\obw)\|_{L^2}^2 \leq \frac{c_L^4}{\kappa_0} \|\nabla\obw\|_{L^2}^3 \|A\obw\|_{L^2}.
 \ee

 Thus, integrating \eqref{eqdiffobwbwstar2} on the interval $[\sigma,s]$ and using \eqref{estBobwobw}, yields
 \begin{multline}\label{estdiffobwbwstar}
  \|\nabla (\obw - \bw^*)(s)\|_{L^2}^2 \leq \Exp^{-\frac{\beta \nu \kappa_0^2}{2} (s-\sigma)} \|\nabla(\obw - \bw^*)(\sigma)\|_{L^2}^2 \\
  + \frac{4c_L^4}{\nu \kappa_0} \int_{\sigma}^s \Exp^{-\frac{\beta \nu \kappa_0^2}{2} (s-\tau)} \|\nabla\obw(\tau)\|_{L^2}^3 \|A\obw(\tau)\|_{L^2} \rd \tau.
 \end{multline}

 Now, we choose $\sigma = s - \frac{n}{\nu \kappa_0^2}$, for some $n \in \mathbb{N}$. Notice that
 \begin{multline}\label{estintAobw}
  \int_{\sigma}^s \|A\obw(\tau)\|_{L^2} \Exp^{-\frac{\beta \nu \kappa_0^2}{2} (s-\tau)} \rd \tau = \sum_{j = 1}^n \int_{s - \frac{j}{\nu\kappa_0^2}}^{s - \frac{(j-1)}{\nu\kappa_0^2}} \|A\obw(\tau)\|_{L^2} \Exp^{-\frac{\beta \nu \kappa_0^2}{2} (s-\tau)} \rd \tau \\
  \leq \sum_{j = 1}^n \left( \int_{s - \frac{j}{\nu\kappa_0^2}}^{s - \frac{(j-1)}{\nu\kappa_0^2}} \|A\obw(\tau)\|_{L^2}^2 \right)^{1/2} \left( \frac{\Exp^{-\beta(j-1)} - \Exp^{-\beta j}}{\beta \nu \kappa_0^2} \right)^{1/2} \\
  \leq \sup_{t\in \R} \left( \frac{1}{\nu \kappa_0^2} \int_t^{t + \frac{1}{\nu \kappa_0^2}} \|A\obw(\tau)\|_{L^2}^2 \rd \tau \right)^{1/2} \frac{(1 - \Exp^{-\beta})^{1/2}}{\beta^{1/2}} \sum_{j = 1}^n \Exp^{-\frac{\beta(j-1)}{2}}\\
  \leq \|\obw\|_Y C_\beta (1 - \Exp^{\frac{-\beta n}{2}}),
 \end{multline}
where $C_\beta = (1 - \Exp^{-\beta})^{1/2}[\beta^{1/2}(1 - \Exp^{-\beta/2})]^{-1}$.

Hence, from \eqref{estdiffobwbwstar} and \eqref{estintAobw}, we obtain that
\begin{multline}
 \|\nabla(\obw - \bw^*)(s)\|_{L^2}^2 \leq \Exp^{-\frac{\beta \nu \kappa_0^2}{2} (s-\sigma)} \|\nabla(\obw - \bw^*)(\sigma)\|_{L^2}^2\\
 + 4 c_L^4 (\nu \kappa_0)^2 \sup_{t \in \R} \frac{\|\nabla\obw(\tau)\|_{L^2}^3}{(\nu \kappa_0)^3} \int_{\sigma}^s \|A\obw(\tau)\|_{L^2} \Exp^{-\frac{\beta \nu \kappa_0^2}{2} (s-\tau)} \rd \tau \\
 \leq \Exp^{-\frac{\beta \nu \kappa_0^2}{2} (s-\sigma)} \|\nabla(\obw - \bw^*)(\sigma)\|_{L^2}^2  + 4 c_L^4 C_\beta (\nu \kappa_0)^2 \|\obw\|_Y^4 (1 - \Exp^{-\frac{\beta n}{2}}).
\end{multline}

Then, taking the limit as $\sigma \to -\infty$ (i.e., $n \to \infty$) and using that $\obw, \bw^* \in \Cb(\R;V)$, it follows that
\be\label{estsupsdiffobwbwstarYa}
 \sup_{s\in \R} \frac{\|\nabla(\obw - \bw^*)(s)\|_{L^2}^2}{(\nu \kappa_0)^2} \leq 4 c_L^4 C_\beta \|\obw\|_Y^4 \leq c c_L^4 C_\beta (3 + \beta)^2 \|\obv\|_X^4,
\ee
where, in the last inequality, we used item \eqref{thmpropsWii} of Theorem \ref{thmpropsW}.

Now, integrating \eqref{eqdiffobwbwstar2} on the interval $[s,s + (\nu \kappa_0^2)^{-1}]$ and using \eqref{estBobwobw} and \eqref{estsupsdiffobwbwstarYa}, yields
\begin{multline}\label{estsupsdiffobwbwstarYb}
 \frac{1}{\nu \kappa_0^2} \int_s^{s + \frac{1}{\nu \kappa_0^2}} \|A(\obw - \bw^*)(\tau)\|_{L^2}^2 \rd \tau \leq c c_L^4 C_\beta (3 + \beta)^2 \|\obv\|_X^4 \\
 + c c_L^4 \left( \sup_{\tau \in \R} \frac{\|\nabla\obw(\tau)\|_{L^2}^3}{(\nu \kappa_0)^3}\right) \int_s^{s + \frac{1}{\nu \kappa_0^2}} \|A\obw(\tau)\|_{L^2}\rd \tau \\
 \leq c c_L^4 C_\beta (3 + \beta)^2 \|\obv\|_X^4 + c c_L^4 \|\obw\|_Y^3 \left( \frac{1}{\nu \kappa_0^2} \int_s^{s + \frac{1}{\nu \kappa_0^2}} \|A\obw(\tau)\|_{L^2}^2 \rd \tau \right)^{1/2} \\
 \leq c c_L^4 (1 + C_\beta) (3 + \beta)^2 \|\obv\|_X^4.
\end{multline}

Hence, from \eqref{estsupsdiffobwbwstarYa} and \eqref{estsupsdiffobwbwstarYb}, we conclude that
\[
 \|\obw - \bw^* \|_Y \leq \widetilde{C}_\beta \|\obv\|_X^2,
\]
where $\widetilde{C}_\beta = [c c_L^4 (1 + 2 C_\beta) (3 + \beta)^2]^{1/2}$. This proves \eqref{eqlittleoh} and concludes the proof of the theorem.
\end{proof}

\subsection{The map $W_+$}

We now restrict our attention to functions defined on $\R_+ = [0,\infty)$ and introduce an analogous framework to the one developed in Subsection \ref{subsecDetMapW}. Thus, we consider the Banach spaces
\[
 Y_+ = \Cb(\R_+;V) \cap \Lb^2 (\R_+;D(A))
\]
and
\[
 X_+ = \Cb(\R_+; (\dot{H}^1(\Omega))^2)
\]
endowed with the norms
\[
 \|\bu\|_{Y_+} = \left\{ \sup_{s\geq 0} \frac{\|\nabla\bu(s)\|_{L^2}^2}{(\nu \kappa_0)^2} + \sup_{s\geq 0} \frac{1}{\nu \kappa_0^2} \int_s^{s + \frac{1}{\nu\kappa_0^2}} \|A\bu(r)\|_{L^2}^2 \rd r \right\}^{1/2}
\]
and
\[
 \|\bv\|_{X_+} = \sup_{s\geq 0} \frac{\|\nabla\bv(s)\|_{L^2}}{\nu \kappa_0},
\]
respectively.

Now, given $\bv \in X_+$, we consider the following initial-value problem:
\be\label{dataeqn+}
 \frac{\rd \bw}{\rd s} + \nu A \bw + B(\bw,\bw) = \f - \beta \nu \kappa_0^2 \Ps (J\bw - \bv), \quad s \in \mathbb{R}_+,
\ee
\be\label{initcond+}
\bw(0) = 0,
\ee
where $\nu$ and $\f$ are the same as in \eqref{eqNSE}.

Similarly as in Proposition \ref{propexistuniqw}, one can show that system \eqref{dataeqn+}-\eqref{initcond+} is well-posed. In fact, all the results given in Proposition \ref{propexistuniqw} are still valid after replacing $\R$ by $\R_+$ and $X$ by $X_+$.

Therefore, given $\rho>0$, $\beta > 0$ and $h > 0$ satisfying conditions \eqref{condbeta} and \eqref{condbetah}, we can define a mapping $W_+ : B_{X_+}(\rho) \to Y_+$ given by
\be\label{defW+}
W_+(\bv) = \bw,
\ee
where $\bw$ is the unique solution of \eqref{dataeqn+}-\eqref{initcond+} corresponding to $\bv \in B_{X_+}(\rho)$.

Moreover, we have the following analogous version of Theorem \ref{thmpropsW} for
$W_+$:

\begin{thm}\label{thmpropsW+}
 Assume the hypotheses of Proposition \ref{propexistuniqw}, with $X$ and $Y$ replaced by
 $X_+$ and $Y_+$ respectively.
 Then, the mapping $W_+: B_{X_+}(\rho) \to Y_+$ defined in \eqref{defW+} satisfies the following properties:
 \begin{enumerate}[(i)]
  \item\label{thmpropsW+i} For every $\bv \in B_{X_+}(\rho)$, $W_+(\bv) \in B_{Y_+}(\sqrt{M})$, with
    \[
      M = M(G,\rho) = 2(2+\beta)\left( \frac{G^2}{\beta} + \rho^2 \right).
    \]
  \item\label{thmpropsW+ii} $W_+$ is a Lipschitz mapping from $B_{X_+}(\rho)$ to $Y_+$ with Lipschitz constant $2(3+\beta)^{1/2}$.
  \item\label{thmpropsW+iii} Let $\bv \in B_{X_+}(\rho)$ and $\bu$ be a solution of \eqref{eqNSE} with $\bu(0) = \bu_0 \in V$, and assume that $\|\nabla J\bu(s) - \nabla\bv(s)\|_{L^2} \to 0$, as $s \to \infty$. Then $\|\nabla[W_+(\bv) - \bu](s)\|_{L^2} \to 0$, as $s \to \infty$, uniformly with respect to $\bu_0$ in any bounded set in $V$.
  \item\label{thmpropsW+iiip} Let $\bu$ be a solution of \eqref{eqNSE} with $\bu(0) = \bu_0 \in V$. Then, $\|\nabla[W_+(J \bu) - \bu](s)\|_{L^2} \to 0$ exponentially, as $s \to \infty$.
  \item\label{thmpropsW+iv} Let $\bv_1,\bv_2 \in B_{X_+}(\rho)$. Then, $W_+(\bv_1) = W_+(\bv_2)$ if and only if $P_\sigma (\bv_1 - \bv_2) = 0$.
  \item\label{thmpropsW+v} For every $\sigma \geq 0$,
    \[
      W_+ \circ \tau_\sigma (\bv) = \tau_\sigma \circ W_+(\bv) \quad \forall \bv \in B_{X_+}(\rho).
    \]
 \end{enumerate}
\end{thm}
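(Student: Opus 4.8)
The whole statement is the half-line transcription of Proposition \ref{propexistuniqw} and Theorem \ref{thmpropsW}, and the plan is to rerun those arguments on $\R_+$, the only structural change being that the prescribed datum $\bw(0)=0$ in \eqref{initcond+} now plays the role that the limit $\sigma\to-\infty$ played on the full line. Granting the half-line analogues of the a priori bounds \eqref{estw1}--\eqref{estdiffw2} for \eqref{dataeqn+}--\eqref{initcond+}, items (i) and (ii) are then immediate: integrating the differential inequality \eqref{eqwdotAw} from $0$ to $t$, the boundary term carries the factor $\|\nabla\bw(0)\|_{L^2}^2=0$, which yields at once the bound of (i) with the stated constant $\sqrt M$, and the same computation applied to $\tw=\bw_2-\bw_1$, for which $\tw(0)=0$ as well, produces the Lipschitz constant $2(3+\beta)^{1/2}$ of (ii).

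For the tracking property (iii) I would mimic the proof of item \eqref{thmpropsWiii} of Theorem \ref{thmpropsW}: with $\tw=W_+(\bv)-\bu$ and $\tv=J\bu-\bv$, subtracting \eqref{eqNSE} from \eqref{dataeqn+} and repeating the $A\tw$-estimate gives, for $0\le\sigma\le t$,
\[
 \|\nabla\tw(t)\|_{L^2}^2\leq\|\nabla\tw(\sigma)\|_{L^2}^2\Exp^{-\frac{\beta\nu\kappa_0^2}{2}(t-\sigma)}+4\sup_{\sigma\leq s\leq t}\|\nabla\tv(s)\|_{L^2}^2,
\]
the half-line counterpart of \eqref{ineqwtildeb}; taking $\limsup_{t\to\infty}$ and then $\sigma\to\infty$ removes the transient and, by hypothesis, the forcing. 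Uniformity over $\bu_0$ in a bounded set $B\subset V$ follows because the solutions of \eqref{eqNSE} issuing from $B$ admit a uniform-in-time $V$ bound, so that, together with (i), $\|\nabla\tw(\sigma)\|_{L^2}$ is bounded uniformly in $\bu_0\in B$ and $\sigma\ge0$, and the threshold after which the exponential term is small may be chosen independently of $\bu_0$. Property (iv) is the case $\tv\equiv0$ of the same inequality with $\sigma=0$ and $\tw(0)=-\bu_0$, giving $\|\nabla\tw(t)\|_{L^2}^2\le\|\nabla\bu_0\|_{L^2}^2\Exp^{-\frac{\beta\nu\kappa_0^2}{2}t}$; here one must first check that $J\bu\in B_{X_+}(\rho)$, which, exactly as in item \eqref{thmpropsWiv} of Theorem \ref{thmpropsW}, rests on the a priori bounds for $\bu$ (via \eqref{h1type1} for a Type I interpolant, and via \eqref{h1type2} together with the $D(A)$ bounds for a Type II interpolant, where the behaviour near $s=0$ must be watched).

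The injectivity statement (v) goes through from item \eqref{thmpropsWv} of Theorem \ref{thmpropsW}: writing the difference equation \eqref{eqtildev} for $\tw=W_+(\bv_2)-W_+(\bv_1)$, $\tw\equiv0$ forces $P_\sigma\tv=0$ a.e., hence everywhere by continuity; conversely, if $P_\sigma\tv=0$ then $\tw\equiv0$ solves the initial-value problem \eqref{dataeqn+}--\eqref{initcond+} for the difference, and here the argument is in fact cleaner than on $\R$, since both solutions share the datum $0$ at $s=0$, so that uniqueness for the IVP immediately gives $\tw=0$.

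The delicate point, which I expect to be the main obstacle, is the translation identity (vi). On $\R$ the proof was simply that $\tau_\sigma\circ W(\bv)$ solves \eqref{dataeqn} with input $\tau_\sigma\bv$ and uniqueness closes the argument; on $\R_+$ this fails, because $\tau_\sigma W_+(\bv)$ does solve \eqref{dataeqn+} with input $\tau_\sigma\bv$ but with initial value $(\tau_\sigma W_+(\bv))(0)=W_+(\bv)(\sigma)$, which is in general nonzero, whereas $W_+(\tau_\sigma\bv)$ is pinned to $0$ at $s=0$. Thus the two functions differ by the evolution of a nonzero initial error, and the exact commutation cannot be obtained from uniqueness alone: for a time-independent input one has $\tau_\sigma\bv=\bv$, so (vi) would force $W_+(\bv)$ to be constant, contradicting $W_+(\bv)(0)=0$. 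The natural way to rescue (vi) is to read it asymptotically: since $\tau_\sigma W_+(\bv)$ and $W_+(\tau_\sigma\bv)$ solve \eqref{dataeqn+} with the \emph{same} input $\tau_\sigma\bv$, their difference obeys the homogeneous stability estimate of the second paragraph with $\tv\equiv0$, whence $\|\nabla[\tau_\sigma\circ W_+(\bv)-W_+\circ\tau_\sigma(\bv)](s)\|_{L^2}\to0$ exponentially as $s\to\infty$, which is the half-line remnant of the full-line commutation.
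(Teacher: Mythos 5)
Your treatment of items (i)--(v) is correct and coincides with the paper's own proof, which consists of the single remark that these items follow ``similar to the proof of Theorem \ref{thmpropsW}'' together with the observation that item \eqref{thmpropsW+iiip} follows from \eqref{ineqwtildeb} with $\tv\equiv 0$; your write-up supplies exactly the details the paper leaves implicit (the boundary term at $s=0$ replacing the limit $\sigma\to-\infty$, the uniform-in-$\bu_0$ bound on $\|\nabla\tw(\sigma)\|_{L^2}$ behind the uniformity claim in (iii), and the observation that the converse direction of (v) is cleaner on $\R_+$ since both solutions share the datum $0$). Your parenthetical caution that one must first verify $\mJ\bu\in B_{X_+}(\rho)$ for (iv) to be meaningful --- delicate for Type II interpolants, since $\bu_0\in V$ gives no $D(A)$ control of $\bu(s)$ near $s=0$ --- flags a point the paper silently skips.

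On item \eqref{thmpropsW+v}, what you have found is not a gap in your argument but a genuine flaw in the paper's. The paper claims this item, too, follows as in Theorem \ref{thmpropsW}, where the proof of \eqref{thmpropsWvi} is pure uniqueness: $\tau_\sigma\circ W(\bv)$ solves \eqref{dataeqn} with input $\tau_\sigma\bv$, hence equals $W(\tau_\sigma\bv)$. On $\R_+$ this collapses exactly as you say: $\tau_\sigma W_+(\bv)$ solves \eqref{dataeqn+} with input $\tau_\sigma\bv$ but with initial value $W_+(\bv)(\sigma)$, whereas $W_+(\tau_\sigma\bv)$ is pinned to $0$ by \eqref{initcond+}, and uniqueness for the initial-value problem cannot identify solutions of two different initial-value problems. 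Your counterexample settles that the identity is false rather than merely unprovable this way: for time-independent $\bv\in B_{X_+}(\rho)$ the identity would force $W_+(\bv)$ to be constant, hence $\equiv 0$, hence $\f+\beta\nu\kappa_0^2 P_\sigma\bv=0$ by \eqref{dataeqn+}, which fails for generic $\bv$. Your exponential-decay substitute (both functions solve \eqref{dataeqn+} with the same input $\tau_\sigma\bv$, so their difference obeys the homogeneous stability estimate \eqref{estdiffw3a}, the needed bound \eqref{boundw1H1} being available from item (i)) is the correct remnant. Be aware, though, of the downstream cost, which your proposal does not address: the exact commutation is invoked in \eqref{ineqkappa0c} and, via \eqref{rewriteEt}, in Theorem \ref{cordetquantmeas}, and the asymptotic version does not slot into those arguments automatically, because the commutator evaluated at $s=0$ equals $\|W_+(\bv)(t)\|$-sized quantities that do not decay as $t\to\infty$; a repair must evaluate the commutator at times growing with $t$ (e.g.\ writing $\mE_t=\mE_{t/2}\circ\tau_{t/2}$), where your exponential estimate does apply.
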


The proof of items \eqref{thmpropsW+i}-\eqref{thmpropsW+iii} and \eqref{thmpropsW+iv}-\eqref{thmpropsW+v} is similar to the proof of Theorem \ref{thmpropsW}. The proof of item \eqref{thmpropsW+iiip} follows from \eqref{ineqwtildeb}, by noting that $\tilde{\bv}(s) = 0$, for all $s \in \R_+$.

\begin{rmk}
Notice that, \emph{a priori}, there is no relation between the maps $W$ and $W_+$ defined on the spaces $X$ and $X_+$, except the fact that they are defined following a similar approach. The construction of the map $W$ on $X$ given here is inspired by \cite{FoiasJollyKravchenkoTiti2014}, while the construction of $W_+$ on $X_+$ uses well-established tools from \cite{AzouaniOlsonTiti2014}. However, in principle, they are not much different, except that for $W$, one has to deal with an evolution equation on all of $\R$, and thus without an initial value.
\end{rmk}

\section{Study of Statistical Solutions}\label{secStatSol}

In this section, we use the determining maps $W$ and $W_+$ introduced in section \ref{secDetMap} to obtain results concerning statistical solutions of the Navier-Stokes equations. In subsections \ref{subsecPrelimMeasTh}-\ref{subsecStatSol}, we recall the preliminary material needed, while Subsections \ref{subsecEnsDA} and \ref{subsecDetParamStatSol} contain the main results.

\subsection{Preliminaries on Measure Theory}\label{subsecPrelimMeasTh}

Let $(\mM_1, \Sigma_1)$ and $(\mM_2, \Sigma_2)$ be measure spaces and $T: \mM_1 \to \mM_2$ be a measurable map. Then, given a measure $\mu$ on $\mM_1$, the \emph{push-forward measure} of $\mu$ by $T$, denoted $T\mu$, is defined as
\be\label{defpushforwardmeas}
 T\mu(E) = \mu (T^{-1} E) \quad \forall E \in \Sigma_2.
\ee

Moreover, given a measure space $(\mM, \Sigma)$ and a measure $\mu$ on $\mM$, for every $\widetilde{\mM} \subset \mM$ with $\widetilde{\mM} \in \Sigma$, we denote by $\mu|_{\widetilde{\mM}}$ the restriction of $\mu$ to $(\widetilde{\mM}, \widetilde{\Sigma})$, where
\be\label{restrictedsigmaalg}
 \widetilde{\Sigma} = \{E \cap \widetilde{\mM} \,:\, E \in \Sigma \}.
\ee

When $\mM$ is a topological space, we denote by $\mP(\mM)$ the space of Borel probability measures on $\mM$, i.e., the space of measures $\mu$ defined on the sigma-algebra $\Sigma$ of Borel subsets of $\mM$.

We say that a measure $\mu \in \mP(\mM)$ is \emph{carried} by $E \in \Sigma$ if $\mu(E) = 1$.

A measure $\mu \in \mP(\mM)$ is said to be \emph{tight} if for every $E  \in \Sigma$,
\[
 \mu(E) = \sup \{ \mu(\mK) \,:\, \mK \mbox{ is a compact subset of }\mM \mbox{ and } \mK \subset E\}.
\]

Moreover, in case $\mM$ is a metric space with a metric $d$, we denote by $\mP_1(\mM,d)$ the subset of all measures $\mu\in \mP(\mM)$ satisfying
\[
 \int_{\mM} d(x,y) \rd \mu(x) < \infty \quad \forall y \in \mM.
 \]

Notably, if $\mu \in \mP(\mM)$ is carried by a bounded subset of $(\mM,d)$, then, clearly, $\mu \in \mP_1(\mM,d)$.

Let $\Lip(\mM,d)$ denote the space of real-valued Lipschitz continuous functions on a metric space $(\mM,d)$, endowed with the seminorm
\[
 \|\varphi\|_{\textnormal{Lip}} := \sup_{x,y \in \mM, x\neq y} \frac{|\varphi(x) - \varphi(y)|}{d(x,y)}.
\]
We recall the definition of the Kantorovich metric in $\mP_1(\mM,d)$ (see, e.g., \cite{Dudley2002}), given by
\begin{multline}
\gamma_\mM (\mu, \eta) = \\
\sup \left\{ \left| \int_\mM \varphi(x) \rd \mu(x) - \int_{\mM} \varphi(x) \rd \eta (x) \right| \,:\, \varphi \in \Lip(\mM,d), \,\, \|\varphi\|_{\textnormal{Lip}} \leq 1 \right\} \\
\quad \forall \mu, \eta \in \mP_1(\mM,d).
\end{multline}

In the sequel, we denote by $\gamma_H$ the Kantorovich metric on $\mP_1(H,d_H)$, where $d_H$ is the metric induced by the norm $\|\cdot \|_{L^2}$, i.e., $d_H (\bu,\bv) = \|\bu - \bv\|_{L^2}$ for all $\bu,\bv \in H$. Also, we denote by $\Gamma_H$ the Kantorovich metric on $\mP_1(\Cloc(\R_+,H),d_0^+)$, where $d_0^+$ is a suitable metric on $\Cloc(\R_+,H)$ defined in subsection \ref{subsecMetrics}, below.

\subsection{Metrics on spaces of continuous functions}\label{subsecMetrics}

For a given Banach space $Z$,
a useful topology on the space of continuous functions $C(I,Z)$ is the \emph{topology of uniform convergence on compact subsets} (see, e.g., \cite{Kelley1975}), which is defined as the topology generated by the sub-base of neighborhoods of the form
\[
 \mN(K,O) =\{ u \in C(I,Z) \,:\, u(t) \in O \;\; \forall t \in K \},
\]
where $K \subset I$ is a compact subset and $O \subset Z$ is an open neighborhood of the origin. Notably, a sequence $\{u_n\}_{n\in \mathbb{N}}$ in $C(I,Z)$ converges to $u \in C(I,Z)$ with respect to this topology if, and only if, for every compact subset $K \subset I$,
\[
 \sup_{t \in K} \|u_n(t) - u(t)\|_Z \to 0 \mbox{ as } n \to \infty,
\]
where $\|\cdot\|_Z$ denotes the norm in $Z$.
From now on, we use the notation $\Cloc(I,Z)$ to denote the space $C(I,Z)$ endowed with the topology of uniform convergence on compact subsets.

Consider the spaces $\Cloc(I,$ $(\dot{L}^2(\Omega))^2)$ and $\Cloc(I,(\dot{H}^1(\Omega))^2)$ of continuous functions on an interval $I \subset \R$ with values in $(\dot{L}^2(\Omega))^2$ and $(\dot{H}^1(\Omega))^2$, respectively, endowed with the corresponding topology of uniform convergence on compact subsets,
 as defined above. 
The fact that $(\dot{L}^2(\Omega))^2$ and $(\dot{H}^1(\Omega))^2$ are, in particular, metric spaces implies that $\Cloc(I,(\dot{L}^2(\Omega))^2)$ and $\Cloc(I,(\dot{H}^1(\Omega))^2)$ are metrizable. Indeed, let $\{K_n\}_{n \geq 1}$ be a sequence of compact subintervals of $I$ such that $I = \bigcup_{n \geq 1} K_n$. Then, a compatible metric with the topology of $\Cloc(I,(\dot{L}^2(\Omega))^2)$ is given by
\be\label{defd0}
d_0^I(\bu,\bv) = \sum_{n \geq 1} \frac{1}{2^n} \frac{\sup_{t \in K_n} \|\bu(t) - \bv(t)\|_{L^2}}{\nu + \sup_{t \in K_n} \|\bu(t) - \bv(t)\|_{L^2}} \quad \forall \bu, \bv \in \Cloc(I,(\dot{L}^2(\Omega))^2),
\ee
while a compatible metric with the topology of $\Cloc(I,(\dot{H}^1(\Omega))^2)$ is given by
\begin{multline}\label{defd1}
d_1^I(\bu,\bv) = \sum_{n \geq 1} \frac{1}{2^n} \frac{\sup_{t \in K_n} \|\nabla\bu(t) - \nabla\bv(t)\|_{L^2}}{\nu \kappa_0 + \sup_{t \in K_n} \|\nabla\bu(t) - \nabla\bv(t)\|_{L^2}}\\
 \forall \bu, \bv \in \Cloc(I,(\dot{H}^1(\Omega))^2).
\end{multline}

 In particular, when $I = \R$, we consider $K_n = [-n (\nu \kappa_0^2)^{-1},n (\nu \kappa_0^2)^{-1}]$ for all $n \geq 1$; and when $I = \R_+$, we consider $K_n^+ = [0,n (\nu \kappa_0^2)^{-1}]$ for all $n \geq 1$. Moreover, in order to simplify the notation, we denote $d_0 = d_0^{\R}$, $d_0^+ = d_0^{\R_+}$, $d_1 = d_1^{\R}$ and $d_1^+ = d_1^{\R_+}$.

\subsection{Statistical solutions}\label{subsecStatSol}

Statistical solutions of an evolution equation are given as probability measures which represent the evolution in time of probability distributions of the model state variables according to the underlying dynamics. They are particularly useful in the study of a physical system for which there is uncertainty with respect to the initial condition, and one would like to determine how this uncertainty is going to evolve in time.

Such statistical solutions can be of two types: first,  the  {\it phase space statistical solutions}, i.e., a family of probability measures $\{\mu_t\}_{t\in I}$ defined on the phase space and indexed by the time variable $t$, representing the evolution in time of probability distributions of the state variables in the phase space; and secondly,
 the  {\it trajectory statistical solutions}, i.e., a single space-time probability measure $\mu$ defined on the space of trajectories, whose support is contained in the set of all possible individual solutions of the evolution equation.

These concepts proved to be the natural rigorous mathematical framework for investigating the statistical properties of 3D turbulent flows as demonstrated in the pioneering works of Foias and Prodi in \cite{Foias72, Foias73}, concerning phase-space statistical solutions, and later by Vishik and Fursikov in \cite{VF78, VF88}, concerning trajectory statistical solutions. More recently, in \cite{FRT2010, FRT2013}, inspired by the definition of a trajectory statistical solution given in \cite{VF78, VF88}, the authors provide a slightly different definition, still in the context of the 3D NSE, which allows for a connection with the definition of a phase-space statistical solution given in \cite{Foias72, Foias73}. An extension of this more recent definition to an abstract framework that can be applied to a large class of evolution equations was given in \cite{BronziMondainiRosa2016}.

When the evolution equation is well-posed, as is the case of the 2D NSE, then, given an initial probability measure $\mu_0$ defined on the associated phase space, obtaining its evolution in time is rather simple. Indeed, using the notation related to the 2D NSE from subsection \ref{subsecNSE}, for the first type of statistical solution, one considers the family of measures given as the push-forward measures of $\mu_0$ by the solution semigroup $S(t)$, $t \geq 0$, i.e., $\{S(t)\mu_0\}_{t\geq 0}$. Moreover, for the second type of statistical solution, let $\mS$ be the solution operator associated to \eqref{eqNSE}, i.e.
\begin{eqnarray*}
\qquad \mS:H &\rightarrow & C(\R_+,H)\\
                \bu_0 &\mapsto& \bu(t)\ \mbox{for all}\ t \in \R_+.
\end{eqnarray*}
where $\bu(t)$ for $t \in \R_+$ is the unique solution of \eqref{eqNSE} satisfying \eqref{propssolNSE} with $\bu(0) = \bu_0$.  From the well-known stability estimates, i.e., Lipschitz continuous dependence on the initial data,
for the 2D NSE \cite{bookcf1988, Temambook1995, Temambook2001}, it follows  that $\mS$ is a continuous, and therefore Borel measurable, map from $H$ to $\Cloc(\R_+, H)$.
Then, the push-forward of $\mu_0$ by $\mS$, i.e., $\mS \mu_0$, is a measure defined on the space of trajectories which is carried by the set of solutions of \eqref{eqNSE}.


Now, we provide the definitions of statistical solutions on the space of trajectories and on the phase space for the 2D NSE, on any time interval $I \subset \R$. First, for every interval $I \subset \R$, let us denote
\be\label{defTI}
\fT^I = \{\bu \in \Cloc(I;H) \,:\, \bu \text{ is a solution of \eqref{eqNSE} on } I\}.
\ee

In the next proposition, we show that $\fT^I$ is a measurable subset of $\Cloc(I,H)$.

\begin{prop}\label{propfTIBorel}
 For every interval $I \subset \R$, $\fT^{I}$ is a Borel subset of $\Cloc(I,H)$.
\end{prop}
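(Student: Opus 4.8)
The plan is to exhibit $\fT^I$ as a countable intersection of \emph{closed} subsets of $\Cloc(I,H)$, using the well-posedness of \eqref{eqNSE} to reduce membership in $\fT^I$ to a constraint involving only the solution map $\mS$ (equivalently the semigroup $S(t)$) evaluated at countably many pairs of times. This avoids having to verify directly that a merely $H$-valued continuous function enjoys the $\Lloc^2(I;V)$-regularity built into the notion of a solution of \eqref{eqNSE}.

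First I would establish the key characterization: a function $\bu \in \Cloc(I,H)$ belongs to $\fT^I$ if and only if $\bu(t) = S(t-s)\bu(s)$ for every $s,t \in I$ with $s \le t$. The forward implication follows from uniqueness of solutions: if $\bu$ is a solution on $I$, then for fixed $s$ the restriction of $\bu$ to $I \cap [s,\infty)$ is the unique solution issuing from $\bu(s)$ at time $s$, hence equals $S(\cdot - s)\bu(s)$. The converse uses existence together with the regularity \eqref{propssolNSE}: if the identity holds, then on each $I \cap [s_0,\infty)$ the function $\bu$ coincides with the semigroup orbit of $\bu(s_0)$, which is a genuine solution of \eqref{eqNSE} with the required regularity; letting $s_0$ decrease to $\inf I$ shows $\bu$ solves \eqref{eqNSE} on all of $I$. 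Note that only forward-in-time evaluations $S(t-s)$ with $t-s \ge 0$ appear, so everything is well defined.

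Next I would pass to a countable family of conditions. For each pair $(s,t)$ with $s \le t$ define $F_{s,t}: \Cloc(I,H) \to H$ by $F_{s,t}(\bu) = \mE_t \bu - S(t-s)(\mE_s\bu)$. Each evaluation operator $\mE_r$ is continuous on $\Cloc(I,H)$ (uniform convergence on compacta implies pointwise convergence), and $S(t-s) = \mE_{t-s}\circ \mS$ is continuous on $H$ by the Lipschitz continuous dependence on the initial data recalled in subsection \ref{subsecStatSol}; hence $F_{s,t}$ is continuous and $F_{s,t}^{-1}(\{0\})$ is closed. I would then show that imposing the identity at rational times already suffices, i.e.
\[
 \fT^I = \bigcap_{\substack{s,t \in \mathbb{Q}\cap I\\ s\le t}} F_{s,t}^{-1}(\{0\}).
\]
The inclusion ``$\subseteq$'' is immediate from the characterization. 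For ``$\supseteq$'', given $\bu$ in the right-hand side and arbitrary real $s \le t$ in $I$ (the case $s=t$ being trivial since $S(0)$ is the identity), I approximate $s,t$ by rationals $s_n,t_n \in I$ with $s_n \le t_n$ and pass to the limit in $\bu(t_n) = S(t_n-s_n)\bu(s_n)$, using the continuity of $\bu$ together with the joint continuity of $(r,\bx)\mapsto S(r)\bx = (\mS\bx)(r)$, itself a direct consequence of the continuity of $\mS: H \to \Cloc(\R_+,H)$.

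Finally, since each $F_{s,t}^{-1}(\{0\})$ is closed, hence Borel, and the intersection runs over the countable set $\{(s,t)\in (\mathbb{Q}\cap I)^2 : s\le t\}$, the set $\fT^I$ is Borel, as claimed. The main obstacle is the characterization step, namely correctly translating ``$\bu$ is a solution of \eqref{eqNSE} on $I$'' into the semigroup identity without losing the regularity information; once that is in place the measurability is a routine continuity-plus-countability argument. An alternative would be to test the weak (integrated-by-parts) formulation of \eqref{eqNSE} against a countable dense family of smooth divergence-free fields over rational time intervals, the relevant point being that the nonlinear term, written as $\int_\Omega (\bu\otimes\bu):\nabla\bphi\,\rd\bx$, is continuous in $\bu$ with respect to the $L^2$-topology; but the semigroup formulation is cleaner because it builds in uniqueness and regularity automatically.
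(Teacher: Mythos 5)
Your proof is correct and rests on the same two ingredients as the paper's: well-posedness of the 2D NSE identifies elements of $\fT^I$ with semigroup orbits, and continuous dependence on initial data (i.e.\ continuity of $\mS$, hence of $(r,\bx)\mapsto S(r)\bx$) makes that condition stable under convergence in $\Cloc(I,H)$. The only difference is packaging: the paper proves sequential closedness directly, using an exhaustion of $I$ by compact intervals $[a_j,b_j]$ and passing to the limit in $\bu_n(t)=S(t-a_j)\bu_n(a_j)$, whereas you exhibit $\fT^I$ as an intersection of the closed sets $F_{s,t}^{-1}(\{0\})$ over rational pairs $s\le t$ --- which in fact shows $\fT^I$ is closed, not merely Borel, so the countability of the index set is not even needed.
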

\begin{proof}
Let $\{\bu_n\} \subset \fT^I$ such that $\bu_n \ra \bu$ in $\Cloc(I; H)$. Obviously,
$I = \bigcup_{j \in \N} [a_j,b_j]$ such that $a_{j+1} \le a_j$ and $b_{j+1} \ge b_j$ for all $j \in \N$. Since $\bu_n(a_j) \ra \bu(a_j)$ in $H$ as $n \ra \infty$, it follows from the continuous dependence on initial data of the solution (cf. \cite{bookcf1988}) that
\[
\sup_{t \in K_j} \|S(t-a_j)\bu_n(a_j)-S(t-a_j)\bu(a_j)\|_{L^2} \ra 0\ \mbox{as}\ n \ra \infty.
\]
Since $S(t-a_j)\bu_n(a_j)=\bu_n(t)$, it follows that $\bu(t)=S(t-a_j)\bu(a_j)$, which is a solution on $K_j$. Thus, $\bu(t)$ is a solution on every $K_j$, and therefore on $I$.
Consequently, $\bu(t) \in \fT^I$. Therefore, $\fT^I$ is closed, and thus a Borel subset of $\Cloc(I,H)$.
\end{proof}



\begin{defs}\label{defVFmeas}
 Given an interval $I \subset \mathbb{R}$, we say that a Borel probability measure $\mu$ on $\Cloc(I,H)$ is a \textbf{trajectory statistical solution of the 2D Navier-Stokes equations \eqref{eqNSE} over $I$} if $\mu$ is carried by $\fT^I$.
\end{defs}

\begin{rmk}
 The abstract definition of a trajectory statistical solution given in \cite{BronziMondainiRosa2016} requires $\mu$ to be a tight measure  as well. However, since every finite Borel measure on a Polish space (i.e., a separable and completely metrizable topological space) is tight (\cite[Theorem 12.7]{AliprantisBorderbook}), and $\Cloc(I;H)$ is a Polish space (see, e.g., \cite[Lemma 3.99]{AliprantisBorderbook}),
 then the tightness condition on $\mu$ in our case is automatically satisfied. Moreover, the definition in \cite{BronziMondainiRosa2016} only requires $\mu$ to be carried by a Borel set containing the set of solutions, i.e., $\fT^I$ in our case. This is done in order to allow for the application to evolution equations for which one cannot determine if its corresponding set of solutions is a Borel set. But since, as shown in Proposition \ref{propfTIBorel}, $\fT^I$ is a Borel set, we can define $\mu$ as being carried by $\fT^I$ directly.
\end{rmk}

Before providing the definition of statistical solutions on phase space, we need to recall the definition of a special class of test functions, called \emph{cylindrical test functions} (see, e.g., \cite{FMRT2001, FRT2013}). These are functions $\Phi: V' \to \R$ of the form
\[
 \Phi(\bu) = \sum_{j=1}^k \phi(\langle \bu, \bv_1 \rangle_{V',V}, \ldots, \langle \bu, \bv_k\rangle_{V',V}) \quad \forall \bu \in V',
\]
where $\phi$ is a continuously differentiable real-valued function on $\R^k$ with compact support and $\bv_1, \ldots, \bv_k \in V$. We denote by $\Phi': V'\to V$ its Fr\'echet derivative, given by
\[
 \Phi'(\bu) = \sum_{j=1}^k \partial_j \phi (\langle \bu, \bv_1 \rangle_{V',V}, \ldots, \langle \bu, \bv_k\rangle_{V',V}) \bv_j \quad \forall \bu \in V',
\]
where $\partial_j \phi$ denotes the derivative of $\phi$ with respect to its $j$-th coordinate.

\begin{defs}\label{defstatsolphasespace}
 Given an interval $I \subset \R$, we say that a family $\{\mu_t\}_{t \in I}$ of Borel probability measures on $H$ is a \textbf{phase-space statistical solution of the 2D Navier-Stokes equations \eqref{eqNSE} over $I$} if
 \begin{enumerate}[(i)]
  \item The function
   \[
     t \mapsto \int_H \varphi(\bu) \rd \mu_t(\bu)
   \]
   is continuous on $I$, for every $\varphi \in \Cb(H)$;
  \item For almost every $t \in I$, the measure $\mu_t$ is carried by $V$ and the function
   \[
    \bu \mapsto \langle \f - \nu A \bu - B(\bu ,\bu ), \bv \rangle_{V',V}
   \]
   is $\mu_t$-integrable, for every $\bv \in V$. Moreover, the map
   \[
    t \mapsto \int_H \langle \f - \nu A \bu - B(\bu,\bu ), \bv \rangle_{V',V} \rd \mu_t(\bu)
   \]
   belongs to $\Lloc^1(I)$, for every $\bv \in V$.
  \item For any cylindrical test function $\Phi$ in $V'$, it holds
   \begin{multline}
   \int_H \Phi(\bu) \rd \mu_t(\bu) = \int_H \Phi(\bu) \rd \mu_{t'}(\bu) \\
    + \int_{t'}^t \int_H \langle \f - \nu A \bu - B(\bu ,\bu ), \bv \rangle_{V',V} \rd \mu_s(\bu) \rd s,
   \end{multline}
   for all $t, t' \in I$  with $t'<t$.
 \end{enumerate}
\end{defs}

In \cite{BronziMondainiRosa2016}, it is shown that, given a trajectory statistical solution $\mu$ on $I \subset \R$, in the sense of Definition \ref{defVFmeas}, the family of measures obtained as its projections in time, i.e., $\{\mE_t \mu\}_{t\in I}$, is a phase-space statistical solution on $I$, in the sense of Definition \ref{defstatsolphasespace}.

Notice that, given $\bu \in \fT^I$, the Dirac measure concentrated on $\bu$, denoted $\delta_\bu$, is clearly a trajectory statistical solution on $I$. Thus, the family of measures $\{\mE_t \delta_{\bu}\}_{t \in I} = \{\delta_{\bu(t)}\}_{t \in I}$ is a phase-space statistical solution on $I$. In fact, the same holds for any convex combination of Dirac measures $\delta_{\bu_1}, \ldots, \delta_{\bu_k}$, with $\bu_1, \ldots, \bu_k \in \fT^I$ and $k \in \mathbb{N}$.  The following proposition elucidates the connection between phase space statistical solutions and trajectory statistical solutions for the 2D NSE.
\begin{prop}
Let $\mu_0$ be a Borel probability measure on $H$ with
\[
 \int_H \|\bu\|_{L^2}^2 \rd \mu_0(\bu) < \infty.
\]
Then the family of measures $\{\mu_t=\mE_t\mS\mu_0=S(t)\mu_0\}_{t \ge 0}$ is the unique
phase space statistical solution with the initial measure given by $\mu_0$ and moreover,
$\mu = \mS\mu_0$ is the unique trajectory statistical solution satisfying $\mE_0\mu=\mu_0$.
\end{prop}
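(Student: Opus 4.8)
The plan is to exhibit $\mu=\mS\mu_0$ as the trajectory statistical solution and its time marginals $\{S(t)\mu_0\}_{t\ge0}$ as the phase-space statistical solution, and then to establish uniqueness for each separately. For existence, I would first note that since $\mS:H\to\Cloc(\R_+,H)$ is continuous, hence Borel measurable, the push-forward $\mu=\mS\mu_0$ is a well-defined Borel probability measure on $\Cloc(\R_+,H)$; moreover $\mS(H)\subseteq\fT^{\R_+}$, so $\mu$ is carried by $\fT^{\R_+}$ (a Borel set by Proposition \ref{propfTIBorel}) and is therefore a trajectory statistical solution in the sense of Definition \ref{defVFmeas}. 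Since $\mE_0\mS=S(0)=\mathrm{Id}_H$, we have $\mE_0\mu=\mu_0$. The time marginals $\mu_t:=\mE_t\mu=\mE_t\mS\mu_0=S(t)\mu_0$ then form a phase-space statistical solution by the projection result of \cite{BronziMondainiRosa2016}; here the moment hypothesis $\int_H\|\bu\|_{L^2}^2\,\rd\mu_0<\infty$ is what guarantees the integrability requirements (ii)--(iii) of Definition \ref{defstatsolphasespace}, because the energy estimates propagate the finite second moment and place $\mu_t$ on $V$ for a.e.\ $t$.

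For uniqueness of the trajectory statistical solution, the key observation is that well-posedness of \eqref{eqNSE} makes $\mE_0$ and $\mS$ mutually inverse on $\fT^{\R_+}$: indeed $\mE_0\circ\mS=\mathrm{Id}_H$, and $\mS\circ\mE_0=\mathrm{Id}_{\fT^{\R_+}}$ because the unique solution issuing from $\bu(0)$ of any $\bu\in\fT^{\R_+}$ is $\bu$ itself. Hence $\mE_0\big|_{\fT^{\R_+}}$ is a homeomorphism onto $H$ with inverse $\mS$. Now if $\eta$ is any trajectory statistical solution with $\mE_0\eta=\mu_0$, then $\eta$ is carried by $\fT^{\R_+}$, on which $\mS\circ\mE_0$ acts as the identity, so $\eta=(\mS\circ\mE_0)\eta=\mS(\mE_0\eta)=\mS\mu_0=\mu$. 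This settles the trajectory part.

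The main obstacle is the uniqueness of the phase-space statistical solution, since the correspondence of \cite{BronziMondainiRosa2016} only runs from trajectory to phase-space. Let $\{\vartheta_t\}_{t\ge0}$ be any phase-space statistical solution with $\vartheta_0=\mu_0$; the goal is $\vartheta_t=S(t)\mu_0$. I would argue by the method of characteristics for the Liouville equation in condition (iii): fix $T>0$ and a cylindrical test function $\Phi$, and consider $g(s)=\int_H\Phi\big(S(T-s)\bu\big)\,\rd\vartheta_s(\bu)$ on $[0,T]$. Differentiating, the contribution from the evolution of $\vartheta_s$ (the Liouville equation) is $\int_H\langle F(\bu),[DS(T-s)(\bu)]^\ast\Phi'(S(T-s)\bu)\rangle_{V',V}\,\rd\vartheta_s$, while the contribution from the backward flow, using $\tfrac{\rd}{\rd s}S(T-s)\bu=-F(S(T-s)\bu)$, is $-\int_H\langle F(S(T-s)\bu),\Phi'(S(T-s)\bu)\rangle_{V',V}\,\rd\vartheta_s$, where $F(\bu)=\f-\nu A\bu-B(\bu,\bu)$. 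These cancel because differentiating the semigroup identity $S(t+\eps)=S(t)S(\eps)$ yields $DS(T-s)(\bu)F(\bu)=F(S(T-s)\bu)$, so $g$ is constant. Evaluating at the endpoints gives $\int_H\Phi\,\rd\vartheta_T=g(T)=g(0)=\int_H\Phi\,\rd(S(T)\mu_0)$; since cylindrical test functions are measure-determining for Borel probability measures on $H$, we conclude $\vartheta_T=S(T)\mu_0$, and $T>0$ was arbitrary.

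The delicate point, and where I expect the real work to lie, is that $\Phi\circ S(T-s)$ is not itself a cylindrical test function, so condition (iii) of Definition \ref{defstatsolphasespace} cannot be applied verbatim and the differentiation of $g$ must be justified rigorously. This requires the differentiability and smoothing properties of the 2D NSE solution operator $S(t)$ (so that $DS(T-s)(\bu)$ and the identity $DS(T-s)(\bu)F(\bu)=F(S(T-s)\bu)$ are meaningful) together with an approximation of $\Phi\circ S(T-s)$ by cylindrical functions, controlling the errors through the a priori bounds furnished by the second-moment hypothesis. Alternatively, one may bypass this technical step by invoking the established uniqueness of phase-space statistical solutions for the 2D NSE (see \cite{FMRT2001,FRT2013}) and merely verifying that $\{S(t)\mu_0\}_{t\ge0}$ is such a solution, which is already covered by the existence argument above.
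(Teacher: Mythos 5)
Your proposal is correct and takes essentially the same route as the paper: your trajectory-uniqueness argument (using that $\mS\circ\mE_0$ is the identity on $\fT^{\R_+}$ and then pushing forward) is exactly the paper's computation, and for phase-space uniqueness the paper does precisely what you offer as the fallback, namely citing \cite[Theorem V.1.4]{FMRT2001} rather than carrying out a Liouville/characteristics argument. The direct argument you sketch --- and rightly flag as incomplete at the step where $\Phi\circ S(T-s)$ fails to be cylindrical --- is therefore not needed.
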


\begin{proof}
The fact that the family of measures $\{\mE_t \mS \mu_0\}_{t \geq 0} = \{S(t) \mu_0\}_{t\geq 0}$ is a phase-space statistical solution on $[0,\infty)$ corresponding to the initial measure $\mu_0$ follows from the definition of a phase space space statistical solution and that of the push forward measure $S(t)\mu_0$, while the uniqueness follows from  \cite[Theorem V.1.4]{FMRT2001}.

For the second part of the statement, clearly, $\mu = \mS \mu_0$ is a trajectory statistical solution on $\Cloc(\R_+,H)$ satisfying the initial condition $\mE_0 \mu = \mu_0$. To show uniqueness, if $\rho$ is any trajectory statistical solution on $\Cloc(\R_+,H)$ satisfying $\mE_0 \rho = \mu_0$, then, for every Borel subset $E$ of $\Cloc(\R_+,H)$, using the fact
that $\mS \circ \mE_0$ is the identity map on $\fT^I$, where $I=\R_+$,
we have
\begin{multline}
 \rho(E) = \rho(E \cap \fT^I) = \rho((\mS\circ \mE_0)^{-1}(E\cap \fT^I)) = \mE_0 \rho (\mS^{-1}(E \cap \fT^I)) \\
 = \mu_0 (\mS^{-1}(E \cap \fT^I)) = \mu_0(\mS^{-1}( E)) = \mS \mu_0(E).
\end{multline}

\end{proof}


\subsection{Ensemble Downscaling Data Assimilation}\label{subsecEnsDA}

In this subsection, our goal is to show that if $\mu$ is a trajectory statistical solution on
$ \R_+$, then the translations (or evaluations) in time of the measure $(W_+ \circ \mJ) \mu$, with $\mJ$ as defined in \eqref{defmJ}, converge to the translations (or evaluations) in time of $\mu$ asymptotically in time, in a suitable sense. In practice, $\mJ \mu$ is a measure constructed through uncertainties associated with measurements of the model state variables and the purpose of $W_+$ is to downscale $\mJ \mu$, and hence
reduce these uncertainties by decreasing the error due to the coarse spatial resolution of the measurements.

Let us consider the following set of trajectories, for every interval $I \subset \R$:
\bea\label{defTB}
\lefteqn{\fTB^I = \{ \bu \in \fT^{I} \,:}\nn \\
&  \qquad \|\nabla\bu(t)\|_{L^2} \leq \sqrt{2} \nu \kappa_0 G, \,\, \|A\bu(t)\|_{L^2} \leq c_2 \nu \kappa_0^2 (G + c_L^{-2})^3 \,\,\, \forall t \in I \},
\eea
where $c_2$ is the constant from \eqref{boundmADA}. Notice that $\fTB^+ = \fTB^{\R_+}$ is a nonempty set, since the global attractor $\mA$ is nonempty, and for every trajectory $\bu(t), t \in \R$ in the global attractor, its restriction to $I$ belongs to $\fT_b^+$,
 due to the estimates \eqref{boundmAH1} and  \eqref{boundmADA}.


As shown in Proposition \ref{propfTIBorel}, $\fTB^I$ is closed, and therefore a Borel subset, of  $\Cloc(I;H)$.
Additionally, as we show in the proposition below,
it is a compact subset of $\Cloc(I;V)$, and therefore it is a compact subset of
$\Cloc(I;H)$ as well.

\begin{lem}\label{lemTBIcptCIV}
 For every interval $I \subset \R$, $\fTB^I$ is a compact subset of $\Cloc(I;V)$.
\end{lem}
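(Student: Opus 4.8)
The plan is to establish compactness through the Arzel\`a--Ascoli theorem. Since $V$ carries the $\|\nabla\cdot\|_{L^2}$ norm, the space $\Cloc(I;V)$ is metrizable by a metric analogous to $d_1^I$ in \eqref{defd1}, so compactness is equivalent to sequential compactness. By Arzel\`a--Ascoli it then suffices to verify three things: that $\fTB^I$ is (a) pointwise relatively compact in $V$, (b) equicontinuous in the $V$-norm on each compact subinterval $K \subset I$, and (c) closed in $\Cloc(I;V)$. Property (a) is immediate: by \eqref{defTB}, for each fixed $t$ the set $\{\bu(t) : \bu \in \fTB^I\}$ is bounded in $D(A)$, and since $A$ has compact inverse the embedding $D(A) \hookrightarrow V$ is compact, whence this set is relatively compact in $V$.

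For (b), which I expect to be the crux, I would first obtain a uniform Lipschitz bound in the weaker $H$-norm and then upgrade it to $V$ by interpolation. From \eqref{eqNSE} we have $\ds \bu = \f - \nu A \bu - B(\bu,\bu)$; using \eqref{estNonlTermC}, the Poincar\'e inequality \eqref{ineqPoincare}, and the uniform bounds on $\|\nabla\bu(t)\|_{L^2}$ and $\|A\bu(t)\|_{L^2}$ built into \eqref{defTB}, each term on the right is bounded in $L^2$ by a constant $C_0 = C_0(\nu,\ka_0,G,c_L)$ independent of $\bu \in \fTB^I$ and of $t \in I$. Hence $\|\bu(t) - \bu(s)\|_{L^2} \leq C_0 |t - s|$ for all $\bu \in \fTB^I$. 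To transfer this to the $V$-norm I would apply the interpolation identity $\|\nabla\bw\|_{L^2}^2 = (\bw, A\bw) \leq \|\bw\|_{L^2}\|A\bw\|_{L^2}$, valid for $\bw \in D(A)$ in the periodic case, to $\bw = \bu(t) - \bu(s)$. Combined with the uniform bound $\|A(\bu(t) - \bu(s))\|_{L^2} \leq 2 c_2 \nu \ka_0^2 (G + c_L^{-2})^3$ from \eqref{defTB}, this yields
\[
 \|\nabla(\bu(t) - \bu(s))\|_{L^2}^2 \leq 2 c_2 C_0 \nu \ka_0^2 (G + c_L^{-2})^3 \, |t - s|,
\]
so that $\fTB^I$ is uniformly H\"older-$1/2$ continuous in $V$, hence equicontinuous on every compact $K$. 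The main obstacle is precisely this step, converting equicontinuity in $H$ into equicontinuity in $V$, which is what forces us to exploit the uniform second-order bound in \eqref{defTB}.

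It then remains to check (c). Let $\bu_n \in \fTB^I$ with $\bu_n \to \bu$ in $\Cloc(I;V)$; then $\bu_n \to \bu$ in $\Cloc(I;H)$ as well, so $\bu \in \fT^I$ by Proposition \ref{propfTIBorel}. The first bound in \eqref{defTB} passes to the limit from the strong convergence $\nabla\bu_n(t) \to \nabla\bu(t)$ in $L^2$ at each $t$. For the second bound I would argue by weak compactness: for fixed $t$ the sequence $\{\bu_n(t)\}$ is bounded in the Hilbert space $D(A)$, hence has a weakly convergent subsequence whose limit, being in particular a weak $V$-limit, must equal $\bu(t)$; thus $\bu(t) \in D(A)$ and, by weak lower semicontinuity of the $D(A)$-norm, $\|A\bu(t)\|_{L^2} \leq \liminf_n \|A\bu_n(t)\|_{L^2} \leq c_2 \nu \ka_0^2 (G + c_L^{-2})^3$. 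Therefore $\bu \in \fTB^I$, so $\fTB^I$ is closed. Being closed and relatively compact in the metrizable space $\Cloc(I;V)$, the set $\fTB^I$ is compact, as claimed.
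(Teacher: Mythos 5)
Your proof is correct, and it rests on the same pillars as the paper's own argument: the uniform $L^2$ bound on $\frac{d}{dt}\bu$ obtained from equation \eqref{eqNSE} together with the bounds built into \eqref{defTB}, the interpolation inequality $\|\nabla\bw\|_{L^2}^2 \le \|\bw\|_{L^2}\|A\bw\|_{L^2}$, compactness of the embeddings $D(A)\hookrightarrow V\hookrightarrow H$, and weak compactness in $D(A)$ (with lower semicontinuity of $\|A\cdot\|_{L^2}$) to show the limit stays in $\fTB^I$. The only structural difference is \emph{where} the interpolation inequality enters. You apply it to time increments $\bu(t)-\bu(s)$ of a single trajectory, which gives uniform H\"older-$1/2$ equicontinuity in $V$ and lets you invoke Arzel\`a--Ascoli directly in $\Cloc(I;V)$. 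The paper instead invokes an Arzel\`a--Ascoli argument only in $\Cloc(I;H)$ (equicontinuity there being just the Lipschitz-in-$H$ bound \eqref{arzela}), verifies that the limit lies in $\fTB^I$, and only then applies the interpolation inequality to the differences $\bu_{n_k}(t)-\bu(t)$ along the extracted subsequence, upgrading $\Cloc(I;H)$-convergence to $\Cloc(I;V)$-convergence in one line. The two routes are interchangeable and of comparable length; yours establishes the slightly stronger fact that $\fTB^I$ is $V$-equicontinuous, while the paper's defers all $V$-norm considerations to the final step. You also spell out the closedness argument via weak convergence in $D(A)$, a point the paper only sketches with ``one can easily show.''
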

\begin{proof}
 First, note that the inclusion $\fTB^I \subset \Cloc(I;V)$ follows from the bounds in \eqref{defTB}, the fact that $\fTB^I \subset \Cloc(I;H)$ and the interpolation inequality
 \[
  \|\nabla (\bu(t_1) - \bu(t_2))\|_{L^2} \leq \|\bu(t_1) - \bu(t_2)\|_{L^2}^{1/2} |A\bu(t_1) - A\bu(t_2)|_{L^2}^{1/2}, \quad \forall t_1,t_2 \in I.
 \]

 Since $\Cloc(I;V)$ is metrizable, it suffices to show that $\fTB^I$ is sequentially compact.
  Note first that due to \eqref{ineqLadyzhenskaya}, for all $\bu \in \fT^I_b$, we have
 \[
 |B(\bu,\bu)| \le \|\bu\|_{L^2}^{1/2}\|\nabla \bu\|_{L^2} \|A\bu\|_{L^2}^{1/2}.
 \]
 From \eqref{boundfTBV}, \eqref{boundfTBDA} and \eqref{nse}, it now follows that
 \be \label{arzela}
 \sup_{\bu \in \fTB^I} \left\|\dt \bu (t)\right\|_{L^2} < \infty.
 \ee
  Let $\{\bu_n\}_{n\in \mathbb{N}}$ be a sequence in $\fTB^I$.
   Due to \eqref{arzela} and the compact embedding of $V$ in $H$,
we can invoke an Arzela-Ascoli type theorem and a diagonal process,
to extract a subsequence $\bu_{n_k} \lra \bu$ in $\Cloc(I; H)$ as $k \ra \infty$.
Moreover, due to the uniform bounds on $\|A\bu_{n_k}(t)\|_{L^2}$ and
$\|\nabla \bu_{n_k}(t)\|_{L^2}$ given in \eqref{defTB}, and since
$\bu_{n_k}  \lra u$ in $C_{loc}(I;H)$ as $k \ra \infty$, one can easily show that
$\|A\bu(t)\|_{L^2}$ and $\|\nabla \bu_{n_k}(t)\|_{L^2}$ satisfy the same bounds given in
\eqref{defTB}. Thus, $\bu \in \fT^I_b$ and
we conclude that $\fTB^I$ is compact in $\Cloc(I,H)$.


 Let now $K \subset I$ be a compact subinterval. By interpolation, we have
 \[
  \|\nabla\bu_{n_k}(t) - \nabla\bu(t) \|_{L^2} \leq c \|\bu_{n_k}(t) - \bu(t)\|_{L^2}^{1/2} \|A\bu_{n_k}(t) - A \bu(t)\|_{L^2}^{1/2} \quad \forall t \in K.
 \]
 Thus,
 \[
  \sup_{t\in K} \|\nabla\bu_{n_k}(t) - \nabla\bu(t) \|_{L^2} \leq c (2  c_2 \nu \kappa_0^2 (G + c_L^{-2})^3)^{1/2} \sup_{t\in K} \|\bu_{n_k}(t) - \bu(t)\|_{L^2}^{1/2}.
 \]
 Since $\bu_{n_k} \to \bu$ in $\Cloc(I;H)$, the right-hand side of the above inequality vanishes as $k \to \infty$, and we obtain that $\bu_{n_k} \to \bu$ in $\Cloc(I;V)$. Therefore, $\fTB^I$ is compact in $\Cloc(I;V)$.
\end{proof}

Here, we consider trajectory statistical solutions on $\R_+$ that are carried by $\fTB^+$. This assumption of being carried by $\fTB^+$ is needed in order to make sense of the measure $(W_+ \circ \mJ) \mu$, with $\mJ$ corresponding to an interpolant operator $J$ which is either a Type I interpolant operator satisfying \eqref{h1type1} or a Type II interpolant operator satisfying \eqref{h1type2}. More specifically, the requirement that trajectories in $\fTB^+$ are uniformly bounded in time with respect to the norm in $V$ is needed  when $J$ is as in the former case; while the requirement that the trajectories are uniformly bounded in time with respect to the norm in $D(A)$ is needed in addition when $J$ is as in the latter case. This is clearly seen from the definitions of the map $W_+$ and the types of interpolant operator $J$.

Notice that, for every solution $\bu \in \fT^{\R_+}$, there exists $t_0 = t_0 (\nu, \kappa_0, G,$ $\|\nabla \bu(0)\|_{L^2})$ such that
\be\label{boundfTBV}
 \|\nabla \bu(t)\|_{L^2} \leq \sqrt{2} \nu \kappa_0 G \quad \forall t \geq t_0,
\ee
and
\be\label{boundfTBDA}
 \|A\bu(t)\|_{L^2} \leq c_2 \nu \kappa_0^2 (G + c_L^{-2})^3  \quad \forall t \geq t_0.
\ee
Therefore, $\tau_{t_0} \bu \in \fTB^+$. The bound in \eqref{boundfTBV} is easily seen by taking the inner product of \eqref{eqNSE} in $H$ with $A\bu$ and performing the usual estimates, while the bound in \eqref{boundfTBDA} follows analogously to the proof of \eqref{boundmADA} in \cite{FoiasJollyLanRupamYangZhang2015}.

Therefore, if we consider an initial measure $\mu_0$ which is carried by $B_V(R)$, for some $R > 0$, then  by \eqref{boundfTBV}, \eqref{boundfTBDA},
the trajectory statistical solution on $\R_+$ starting from $\mu_0$, i.e., $\mu = \mS \mu_0$, satisfies $\tau_{t_0} \mu (\fTB^+) = 1$, for some $t_0 = t_0(\nu, \kappa_0, G, R)$. Indeed,
\[
 \tau_{t_0} \mu (\fTB^+) = \mu (\tau_{t_0}^{-1} \fTB^+) = \mS \mu_0 (\tau_{t_0}^{-1} \fTB^+) = \mu_0 (\mS^{-1} \circ \tau_{t_0}^{-1} (\fTB^+)) \geq \mu_0 (B_V(R)) =1.
\]
Therefore, $\tau_{t_0} \mu$ is a trajectory statistical solution on $[0,\infty)$ which is carried by $\fTB^+$.

Another fact that   we need to verify in order to make sense of the measure $(W_+ \circ \mJ) \mu$ and, in addition, its translations or evaluations in time, is the measurability of the mappings $W_+$, $\mJ$, $\tau_\sigma$ and $\mE_t$, with $\sigma,t \geq 0$. More specifically, this needs to be proved by considering the corresponding domain and range spaces endowed with the topology of uniform convergence on compact sets (see subsection \ref{subsecSpacContFunc}), since this is the natural topology to be considered in the context of statistical solutions.  This is done in Lemma \ref{lemmeasurability} below. For this, we assume throughout this section that $J$ is either a Type I interpolant operator satisfying \eqref{type1} and \eqref{h1type1} or a Type II interpolant satisfying the stronger condition \eqref{type2b}, as well as \eqref{h1type2}. The stronger condition \eqref{type2b} is needed for establishing continuity of $W_+$, and consequently its measurability, between appropriate spaces.


From now on, for every interval $I \subset \R$, we denote by $\fTBH^I$ the space $\fTB^I$ endowed with the topology  inherited from $\Cloc(I;H)$, and by $\fTBV^I$ when $\fT^I$ is endowed with the topology  inherited from $\Cloc(I;V)$. Moreover, we denote by $B_X(\rho)_\locH$ and $B_{X_+}(\rho)_\locH$ the balls $B_X(\rho)$ and $B_{X_+}(\rho)$ endowed with the induced topology from $\Cloc(\R,H)$ and $\Cloc(\R_+,H)$, respectively.

\begin{lem}\label{lemmeasurability}
 The following hold:
 \begin{enumerate}[(i)]
  \item\label{lemmeasurabilityia} For every $t \geq 0$  and $I \subset \R$,  $\mE_t: \Cloc(I,H) \to H$ is a continuous function.
  \item\label{lemmeasurabilityi} For every $\sigma \geq 0$, $\tau_\sigma: \Cloc(\R_+;H) \to \Cloc(\R_+;H)$ is a continuous function.
  \item\label{lemmeasurabilityii}  Let $J$ be an interpolant operator satisfying either \eqref{type1} or \eqref{type2b}. Also, consider $\rho > 0$, $\beta > 0$ and $h > 0$ satisfying conditions \eqref{condbeta} and \eqref{condbetah}. Then,
    \[
     W: B_X(\rho)_\locH \to \Cloc(\R;V)
    \]
    and
    \[
     W_+: B_{X_+}(\rho)_\locH \to \Cloc(\R_+;V)
    \]
    are continuous functions. Moreover, $W_+$ is a Lipschitz function with respect to the metrics $d_0^+$ in $B_{X_+}(\rho)_\locH$ and $d_1^+$ in $\Cloc(\R_+;V)$.
  \item\label{lemmeasurabilityiv} Let $J$ be an interpolant operator satisfying either \eqref{type1} or \eqref{type2b}. Then, the mapping $\mJ: \fTBH^I \to \Cloc(I;(\dot{L}^2(\Omega))^2)$, defined as in \eqref{defmJ}, is continuous.
 \end{enumerate}
\end{lem}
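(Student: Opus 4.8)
Items \eqref{lemmeasurabilityia} and \eqref{lemmeasurabilityi} are immediate consequences of the definition of the topology of uniform convergence on compact subsets, together with the metrizability of the spaces involved, which lets me argue sequentially. For \eqref{lemmeasurabilityia}, if $\bu_n \to \bu$ in $\Cloc(I,H)$ then, taking the compact set $\{t\}\subset I$, I get $\|\bu_n(t)-\bu(t)\|_{L^2}\to 0$, that is $\mE_t\bu_n \to \mE_t\bu$ in $H$. For \eqref{lemmeasurabilityi}, the point is that for $\sigma \geq 0$ and any compact $K \subset \R_+$ the shifted set $K+\sigma$ is again compact in $\R_+$, so that $\sup_{t\in K}\|(\tau_\sigma\bu_n)(t)-(\tau_\sigma\bu)(t)\|_{L^2} = \sup_{r\in K+\sigma}\|\bu_n(r)-\bu(r)\|_{L^2}\to 0$.

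The substantive item is \eqref{lemmeasurabilityii}, where convergence of the data is only assumed in the weak ($H$-valued) topology while the conclusion is in the stronger ($V$-valued) one; I would adapt the energy estimate of Proposition \ref{propexistuniqw} accordingly. Given $\bv_n\to\bv$ in $B_{X_+}(\rho)_\locH$, set $\bw_n=W_+(\bv_n)$, $\bw=W_+(\bv)$, $\tw=\bw_n-\bw$, $\tv=\bv_n-\bv$; by item \eqref{thmpropsW+i}, $\bw_n,\bw\in B_{Y_+}(\sqrt M)$, so they obey the uniform bound \eqref{estwVb}. Taking the inner product with $A\tw$ of the equation satisfied by $\tw$ and estimating the nonlinear contributions exactly as in \eqref{nonlinLip}--\eqref{wtildeineq2} (the term $(B(\tw,\tw),A\tw)$ vanishing by \eqref{em}), the only change from Proposition \ref{propexistuniqw} is in the forcing term: instead of pairing $\tv$ with $A\tw$ through $\nabla\tv$, I would use Young's inequality in the form $\beta\nu\kappa_0^2(\tv,A\tw)\leq \frac{\nu}{8}\|A\tw\|_{L^2}^2 + c\beta^2\nu\kappa_0^4\|\tv\|_{L^2}^2$, which requires only $L^2$-control of $\tv$. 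Absorbing the resulting $\|A\tw\|_{L^2}^2$ terms via \eqref{condbeta} and \eqref{condbetah} leaves
\[
 \frac{\rd}{\rd s}\|\nabla\tw\|_{L^2}^2 + \frac{\beta\nu\kappa_0^2}{2}\|\nabla\tw\|_{L^2}^2 \leq c\beta^2\nu\kappa_0^4\|\tv(s)\|_{L^2}^2.
\]

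For $W_+$ I would integrate this inequality on $[0,t]$ using the initial condition $\tw(0)=0$, giving $\|\nabla\tw(t)\|_{L^2}^2\leq c\beta^2\nu\kappa_0^4\int_0^t\Exp^{-\frac{\beta\nu\kappa_0^2}{2}(t-r)}\|\tv(r)\|_{L^2}^2\rd r$; since for $t\in K_n^+$ the integration range lies in $K_n^+$, this yields the clean term-by-term bound $\sup_{t\in K_n^+}\|\nabla\tw(t)\|_{L^2}\leq L_0\sup_{t\in K_n^+}\|\tv(t)\|_{L^2}$ with $L_0=L_0(\beta,\kappa_0)$ independent of $n$. Inserting this into \eqref{defd0}--\eqref{defd1}, using that $x\mapsto x/(c+x)$ is increasing and the elementary bound $\sup_{r\geq 0}L_0(\nu+r)/(\nu\kappa_0+L_0 r)=\max\{1,L_0/\kappa_0\}$, I would obtain $d_1^+(W_+(\bv_1),W_+(\bv_2))\leq \max\{1,L_0/\kappa_0\}\,d_0^+(\bv_1,\bv_2)$, which is the asserted Lipschitz (hence continuity) property. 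For $W$, where there is no initial value, I would integrate on $[\sigma,t]$ and let $\sigma\to-\infty$ (permissible since $\bw_n,\bw\in\Cb(\R;V)$), obtaining $\|\nabla\tw(t)\|_{L^2}^2\leq c\beta^2\nu\kappa_0^4\int_{-\infty}^t\Exp^{-\frac{\beta\nu\kappa_0^2}{2}(t-r)}\|\tv(r)\|_{L^2}^2\rd r$. To deduce $\sup_{t\in K}\|\nabla\tw(t)\|_{L^2}\to 0$ on a compact $K=[a,b]$, I would split the memory integral at some $-R$: the tail $\int_{-\infty}^{-R}$ is controlled using the uniform bound $\|\tv(r)\|_{L^2}\leq 2\rho\nu$ and made small by taking $R$ large (uniformly in $n$), while on $[-R,b]$ the $\Cloc$-convergence forces $\sup_{r\in[-R,b]}\|\tv(r)\|_{L^2}\to 0$; this splitting is the main technical point of the item.

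Finally, for item \eqref{lemmeasurabilityiv} I would use the linearity of $J$ to write $(\mJ\bu_n)(t)-(\mJ\bu)(t)=J(\bu_n(t)-\bu(t))$ and apply \eqref{type1} in the Type I case, or the \emph{stronger} property \eqref{type2b} in the Type II case, where
\begin{multline*}
 \|J(\bu_n(t)-\bu(t))\|_{L^2}\leq \|\bu_n(t)-\bu(t)\|_{L^2}+c_{2,1}'h\|\nabla(\bu_n(t)-\bu(t))\|_{L^2}\\
 +c_{2,2}'h^{3/2}\|\nabla(\bu_n(t)-\bu(t))\|_{L^2}^{1/2}\|A(\bu_n(t)-\bu(t))\|_{L^2}^{1/2}.
\end{multline*}
The first term tends to $0$ uniformly on compacts by hypothesis; for the remaining two I would invoke the interpolation argument of Lemma \ref{lemTBIcptCIV}, namely that the uniform $D(A)$-bound in \eqref{defTB} upgrades $\Cloc(I;H)$-convergence to $\Cloc(I;V)$-convergence on $\fTB^I$, so that $\sup_{t\in K}\|\nabla(\bu_n(t)-\bu(t))\|_{L^2}\to 0$, while $\|A(\bu_n(t)-\bu(t))\|_{L^2}^{1/2}$ stays uniformly bounded there. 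This is exactly why \eqref{type2b} is indispensable: under the weaker \eqref{type2} the last term would instead carry a full power $\|A(\bu_n(t)-\bu(t))\|_{L^2}$, which is only bounded and not small, and continuity would fail. The central obstacle throughout is thus the mismatch between the $H$-topology assumed on the data and the $V$-topology demanded in the conclusion, resolved in \eqref{lemmeasurabilityii} by the parabolic smoothing in the energy estimate and in \eqref{lemmeasurabilityiv} by the a priori $D(A)$-bound available on $\fTB^I$.
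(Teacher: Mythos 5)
Your proposal is correct and takes essentially the same route as the paper's proof: items \eqref{lemmeasurabilityia}--\eqref{lemmeasurabilityi} are immediate, item \eqref{lemmeasurabilityii} is handled by the same energy estimate in which the nudging term is bounded as $\beta \nu \kappa_0^2 |(\tv,A\tw)| \leq \tfrac{\nu}{8}\|A\tw\|_{L^2}^2 + 2\beta^2\nu\kappa_0^4\|\tv\|_{L^2}^2$ so that only $L^2$-control of the data is required, and item \eqref{lemmeasurabilityiv} uses \eqref{type2b} together with interpolation and the uniform $D(A)$-bound on $\fTB^I$, exactly as the paper does. The only cosmetic differences are that your tail-splitting of the Duhamel memory integral for $W$ plays the role of the paper's choice of a sufficiently negative $\sigma$ combined with the a priori bound on $\|\nabla\tw(\sigma)\|_{L^2}$, and that you quote the $H$-to-$V$ upgrade from Lemma \ref{lemTBIcptCIV} where the paper inlines the same interpolation inequality.
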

\begin{proof}
%
The proofs of \eqref{lemmeasurabilityia} and \eqref{lemmeasurabilityi} follow directly from the definition of the topology in $\Cloc(\R_+;H)$ and the definitions of the mappings $\mE_t$ and $\tau_\sigma$.

Now let us prove \eqref{lemmeasurabilityii}. Let $\bv_1,\bv_2 \in B_X(\rho)$ and consider $\varepsilon >0$ and $K =[a,b] \subset \R$, a compact interval. Denote $\tv = \bv_2 - \bv_1$ and $\tw = W(\bv_2) - W(\bv_1)$,  and proceed as in the proof of item \eqref{estdiffw1} of Proposition \ref{propexistuniqw}. Then, estimating the third term on the right-hand side of the first inequality in \eqref{ineqwtilde} as
\[
 \beta \nu \kappa_0^2 |(\tv,A\tw)| \leq 2 \beta^2 \nu \kappa_0^4 \|\tv\|_{L^2}^2 + \frac{\nu}{8} \|A\tw\|_{L^2}^2,
\]
and proceeding analogously as in \eqref{ineqwtilde}-\eqref{estdiffw3a}, one obtains that, for every $t, \sigma \in \R$ with $\sigma < t$,
\begin{multline}\label{ineqtwlemmeas}
 \|\nabla\tw(t)\|_{L^2}^2 \leq \|\nabla\tw(\sigma)\|_{L^2}^2  \Exp^{- \frac{\beta \nu \kappa_0^2}{2}(t - \sigma) }+ 8 \beta \kappa_0^2 \sup_{s \in [\sigma,t]}\|\tv(s)\|_{L^2}^2 \\
 \leq 2 (\nu \kappa_0)^2 \left( \frac{G^2}{\beta} + \rho^2 \right)  \Exp^{- \frac{\beta \nu \kappa_0^2}{2}(t-\sigma)} + 8 \beta \kappa_0^2 \sup_{s \in [\sigma,b]}\|\tv(s)\|_{L^2}^2,
\end{multline}
where in the last inequality we used item \eqref{estw1} of Proposition \ref{propexistuniqw}. In particular, let us choose $t \in K = [a,b]$ and $\sigma <a$, with sufficiently large absolute value such that
\be\label{ineqcondsigmalemmeas}
 2 (\nu \kappa_0)^2 \left( \frac{G^2}{\beta} + \rho^2 \right)  \Exp^{ -\frac{\beta \nu \kappa_0^2}{2}(a-\sigma)} < \frac{\varepsilon}{2}.
\ee
Thus, if $\bv_2 \in \bv_1 + \mN_1$, with
\[
 \mN_1 = \{ \bv \in B_X(\rho) \,:\, \|\bv(s)\|_{L^2} < (16 \beta \kappa_0^2)^{-1}\varepsilon, \,\,\, \forall s \in [\sigma,b] \},
\]
then, from \eqref{ineqtwlemmeas} and \eqref{ineqcondsigmalemmeas}, it follows that
\[
 \sup_{t \in [a,b]} \|\nabla [W(\bv_2) - W(\bv_1)](t)\|_{L^2} < \varepsilon.
\]Moreover, we also show the measurability of the determining map $W$ since this is needed in Subsection \ref{subsecDetParamStatSol} when we consider trajectory statistical solutions on the whole $\R$.
This shows that $W: B_X(\rho)_\locH \to \Cloc(\R;V)$ is continuous.

In order to prove that $W_+: (B_{X_+}(\rho),d_0^+) \to (\Cloc(\R_+;V), d_1^+)$ is Lipschitz, consider $\bv_1,\bv_2 \in B_{X_+}(\rho)$. As before, denote $\tv = \bv_1 - \bv_2$ and $\tw = W_+(\bv_2) - W_+(\bv_1)$. Similarly as in \eqref{ineqtwlemmeas}, we have, for every $t, \sigma \in \R_+$,
\[
 \|\nabla\tw(t)\|_{L^2}^2 \leq \|\nabla\tw(\sigma)\|_{L^2}^2 \Exp^{- \frac{\beta \nu \kappa_0^2}{2}(t - \sigma)} + 8 \beta \kappa_0^2 \sup_{s \in [\sigma,t]}\|\tv(s)\|_{L^2}^2.
\]
In particular, choosing $\sigma = 0$ and $t \in [0, n(\nu \kappa_0^2)^{-1}]$, with $n\in \mathbb{N}$, we obtain that
\[
 \sup_{t \in [0, n(\nu \kappa_0^2)^{-1}]} \|\nabla\tw(t)\|_{L^2}^2 \leq 8 \beta \kappa_0^2 \left(\sup_{s \in [0, n(\nu \kappa_0^2)^{-1}]}\|\tv(s)\|_{L^2}^2\right),
\]
where we used that $\tw(0) = 0$. Thus,
\be\label{estWLip}
 \sup_{s\in K_n} \frac{\|\nabla[W_+(\bv_2) - W_+(\bv_1)](s)\|_{L^2}}{\nu \kappa_0} \leq (8 \beta)^{1/2} \sup_{s\in K_n} \frac{\|\bv_2(s) - \bv_1(s)\|_{L^2}}{\nu} \quad \forall n \in \mathbb{N},
\ee
where $K_n = [0, n(\nu \kappa_0^2)^{-1}]$. From the definitions of the metrics $d_0^+$ and $d_1^+$, this implies that
\be\label{ineqW+Lipschitz}
 d_1^+(W_+(\bv_2),W_+(\bv_1)) \leq (8 \beta)^{1/2} d_0^+(\bv_2,\bv_1),
\ee
as desired.

Finally, let us prove \eqref{lemmeasurabilityiv}. Suppose that $J$ is a Type II interpolant operator satisfying the stronger property \eqref{type2b}. Let $\bu_1, \bu_2 \in \fTB^I$. Notice that, for every $t,s \in I$,
\begin{multline}
 \|(\mJ \bu_1)(t) - (\mJ \bu_2)(s)\|_{L^2} \leq \|\bu_1(t) - \bu_2(s)\|_{L^2} + c_{2,1}' h \|\nabla\bu_1(t) - \nabla\bu_2(s)\|_{L^2} \\
 + c_{2,2}' h^{3/2} \|\nabla\bu_1(t) - \nabla\bu_2(s)\|_{L^2}^{1/2} \|A\bu_1(t) - A \bu_2(s)\|_{L^2}^{1/2} \\
 \leq \|\bu_1(t) - \bu_2(s)\|_{L^2} + c_{2,1}' h  \|\bu_1(t) - \bu_2(s)\|_{L^2}^{1/2}
 \|A\bu_1(t) - A\bu_2(s)\|_{L^2}^{1/2} \\
+ c_{2,2}' h^{3/2} \|\bu_1(t) - \bu_2(s)\|_{L^2}^{1/4} \|A\bu_1(t) - A \bu_2(s)\|_{L^2}^{3/4}\\
\leq \|\bu_1(t) - \bu_2(s)\|_{L^2} + c_{2,1}' h
 (2 c_2 \nu \kappa_0^2 (G + c_L^{-2})^3)^{1/2}\|\bu_1(t) - \bu_2(s)\|_{L^2}^{1/2} \\
+ c_{2,2}' h^{3/2} (2 c_2 \nu \kappa_0^2 (G + c_L^{-2})^3)^{3/4} \|\bu_1(t) - \bu_2(t)\|_{L^2}^{1/4}. \label{intermedineq}
\end{multline}
First, by taking $\bu_1=\bu_2=\bu$ in \eqref{intermedineq}, we see that $\mJ \bu$ belongs to $\Cloc(I,$ $ (L^2(\Omega))^2)$. Subsequently, by taking $s=t$ in \eqref{intermedineq}, it follows that
$\mJ: \fTBH^I \to \Cloc(I, (\dot{L}^2(\Omega))^2)$ is continuous.
The proof for an interpolant operator satisfying \eqref{type1} is similar, and therefore omitted.
\end{proof}

%

In the next theorem, for a given trajectory statistical solution $\mu$ on $\R_+$, we show the asymptotic convergence in time of the translations or evaluations in time of $(W_+ \circ \mJ) \mu$ to $\mu$, with respect to the Kantorovich metric. Note that the measure
$\mJ \mu$ is constructed from the observations. For instance, if $J$ is the nodal interpolant, then the corresponding measure $\mJ \mu$ is constructed from the statistics observed at the measurement nodes.
Applying $W_+$ to the measure $\mJ \mu$, \comments{which is constructed from the uncertainties inherent to the measurements,} constitutes  an ensemble-based data assimilation algorithm, where the evolution equation \eqref{dataeqn} is used as the forecast model.

We recall that $\Gamma_H$ denotes the Kantorovich metric on $\mP_1(\Cloc(\R_+;H),d_0^+)$ and $\gamma_H$ denotes the Kantorovich metric on $\mP_1(H,d_H)$, with $d_H(\bu,\bv) = \|\bu - \bv\|_{L^2}$. Moreover, we recall that $\mu_{\fTB^+}$ denotes the restriction of the measure $\mu$ to the measurable set $\fTB^+$, as defined in \eqref{restrictedsigmaalg}. In case $\mu$ is carried by $\fTB^+$ then, for any Borel set $E$, $\mu_{\fTB^+} (E \cap \fTB^+) = \mu(E)$.

\begin{thm}\label{thmDAmeasTB}
 Let $J$ be an interpolant operator satisfying either \eqref{type1} and \eqref{h1type1} or \eqref{type2b} and \eqref{h1type2}. Let $\rho > 0$, $\beta > 0$ and $h >0$ satisfying conditions \eqref{condbeta} and \eqref{condbetah}. Moreover, in case $J$ satisfies \eqref{type1} and \eqref{h1type1}, we assume that $\rho$ satisfies
 \be\label{condrhoballa}
  \rho \geq \sqrt{2}(1+\tilde{c}_1) G,
 \ee
where $\tilde{c}_1$ is the constant from \eqref{h1type1}; and in case $J$ satisfies \eqref{type2b} and \eqref{h1type2}, we assume that
\be\label{condrhoballb}
 \rho \geq c_3^{**} \left[ G + \frac{(G + c_L^{-2})^3}{\beta^{1/2}}\right],
\ee
where $c_3^{**} = \max\{ \sqrt{2} (1 + \tilde{c}_{2,1}), \tilde{c}_{2,2} c_2 \sqrt{c_2^*}\}$, with $\tilde{c}_{2,1}$ and $\tilde{c}_{2,2}$ being the constants from \eqref{h1type2}, $c_2$ the constant from \eqref{boundmADA} and $c_2^*$ the constant from \eqref{condbetah}. Let $\mu \in \mP(\Cloc(\R_+;H),d_0^+)$ be a trajectory statistical solution on $\R_+$ which is carried by $\fTB^+$. Then, the following properties hold:
 \begin{enumerate}[(i)]
  \item\label{thmDAmeasTBi} $\Gamma_H ((\tau_t \circ W_+ \circ \mJ)\mu|_{\fTB^+}, \tau_t \mu) \to 0$ exponentially, as $t \to \infty$;
  \item\label{thmDAmeasTBii} $\gamma_H ((\mE_t \circ W_+ \circ \mJ)\mu_{\fTB^+}, \mE_t \mu) \to 0$ exponentially, as $t \to \infty$.
 \end{enumerate}
\end{thm}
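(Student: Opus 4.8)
The plan is to deduce the convergence of the measures directly from the uniform exponential synchronization of individual trajectories, and then to transport that pointwise estimate through the variational definition of the Kantorovich metric. I will use throughout that $\mu$ is carried by $\fTB^+$, that conditions \eqref{condrhoballa}--\eqref{condrhoballb} guarantee $J\bu(s) \in B_{X_+}(\rho)$ for every $\bu \in \fTB^+$ (exactly as in the proof of Theorem \ref{thmpropsW}\eqref{thmpropsWiv}), so that $W_+ \circ \mJ$ is well defined on $\fTB^+$, and that the maps $\mJ$, $W_+$, $\tau_t$ and $\mE_t$ are measurable by Lemma \ref{lemmeasurability}. Since $\mu$, and hence the push-forward measures, are carried by bounded sets, they lie in the relevant $\mP_1$ spaces and $\Gamma_H$, $\gamma_H$ are well defined.

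The first reduction is standard. Fix a test function $\varphi$ with $\|\varphi\|_{\Lip} \le 1$. Applying the change-of-variables formula \eqref{defpushforwardmeas} and using $\mu(\fTB^+) = 1$, both integrals collapse to integrals over $\fTB^+$ against $\mu$, and the Lipschitz bound on $\varphi$ yields, for part \eqref{thmDAmeasTBi},
\[
 \left| \int \varphi \, \rd [(\tau_t \circ W_+ \circ \mJ)\mu|_{\fTB^+}] - \int \varphi \, \rd (\tau_t \mu) \right| \le \int_{\fTB^+} d_0^+(\tau_t W_+(\mJ\bu), \tau_t \bu) \, \rd\mu(\bu),
\]
and the analogue for part \eqref{thmDAmeasTBii} with $d_0^+$ replaced by $d_H$ and $\tau_t$ by $\mE_t$. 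Taking the supremum over all such $\varphi$ bounds $\Gamma_H$ (resp.\ $\gamma_H$) by the right-hand integral, so it remains to estimate the integrand uniformly in $\bu \in \fTB^+$.

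The crucial per-trajectory estimate is the exponential synchronization of Theorem \ref{thmpropsW+}\eqref{thmpropsW+iiip}, made quantitative and \emph{uniform} via \eqref{ineqwtildeb}. Applying that inequality with $\tv \equiv 0$ and $\sigma = 0$, and using that $W_+(\mJ\bu)(0)=0$ by \eqref{initcond+} while $\|\nabla\bu(0)\|_{L^2} \le \sqrt{2}\,\nu\kappa_0 G$ for every $\bu \in \fTB^+$ (by \eqref{defTB}), I obtain
\[
 \|\nabla[W_+(\mJ\bu) - \bu](r)\|_{L^2}^2 \le 2(\nu\kappa_0 G)^2 \Exp^{-\frac{\beta\nu\kappa_0^2}{2} r}, \qquad r \ge 0,
\]
and hence, by the Poincar\'e inequality \eqref{ineqPoincare},
\[
 \|W_+(\mJ\bu)(r) - \bu(r)\|_{L^2} \le \sqrt{2}\,\nu G \, \Exp^{-\frac{\beta\nu\kappa_0^2}{4} r},
\]
with rate and constant independent of $\bu \in \fTB^+$. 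Part \eqref{thmDAmeasTBii} then follows by evaluating at $r=t$, giving $\gamma_H(\cdots) \le \sqrt{2}\,\nu G \,\Exp^{-\beta\nu\kappa_0^2 t/4}\to 0$. For part \eqref{thmDAmeasTBi} I unwind $d_0^+$ from \eqref{defd0}: after translating by $\tau_t$, every time occurring in a window $K_n^+=[0,n(\nu\kappa_0^2)^{-1}]$ is shifted to $s+t \ge t$, so $\sup_{s\in K_n^+}\|W_+(\mJ\bu)(s+t)-\bu(s+t)\|_{L^2} \le \sqrt{2}\,\nu G \,\Exp^{-\beta\nu\kappa_0^2 t/4}$ for every $n$; using the monotonicity of $x\mapsto x/(\nu+x)$, the bound $x/(\nu+x)\le x/\nu$, and $\sum_{n\ge1}2^{-n}=1$ gives $d_0^+(\tau_t W_+(\mJ\bu),\tau_t\bu) \le \sqrt{2}\,G\,\Exp^{-\beta\nu\kappa_0^2 t/4}$, again uniformly in $\bu$, whence $\Gamma_H(\cdots)\le \sqrt{2}\,G\,\Exp^{-\beta\nu\kappa_0^2 t/4}\to 0$.

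I expect the only genuinely delicate points to be: (a) guaranteeing that the decay rate \emph{and} prefactor are uniform in $\bu$, which is exactly why the hypothesis that $\mu$ be carried by $\fTB^+$ (uniform bounds on $\|\nabla\bu(0)\|_{L^2}$, and on $\|A\bu\|_{L^2}$ so that $\mJ\bu\in B_{X_+}(\rho)$) is indispensable; and (b) the interaction of the translation with the weighted series defining $d_0^+$, where the favorable feature is that $\tau_t$ only shifts the relevant times forward, so the single worst-case exponential at time $t$ dominates the entire series. Measurability and the collapse of the integrals onto $\fTB^+$ are routine, being furnished by Lemma \ref{lemmeasurability} and the carrying hypothesis respectively.
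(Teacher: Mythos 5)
Your proposal is correct and follows essentially the same route as the paper: reduce the Kantorovich distances, via the supremum over Lipschitz test functions and the push-forward formula, to a per-trajectory integrand over $\fTB^+$, then bound that integrand uniformly by the exponential synchronization estimate \eqref{ineqwtildeb} with $\tv\equiv 0$, using $W_+(\mJ\bu)(0)=0$ and the uniform bound $\|\nabla\bu(0)\|_{L^2}\le\sqrt{2}\,\nu\kappa_0 G$ on $\fTB^+$ (this is exactly the content of Theorem \ref{thmpropsW+}\eqref{thmpropsW+iiip} as invoked in the paper). Your treatment of the weighted series defining $d_0^+$ (dividing by $\nu$, summing $\sum_n 2^{-n}$) is in fact slightly more careful than the paper's one-line bound $d_0^+\le\sup_{s\ge t}\|\cdot\|_{L^2}$, but the argument is the same.
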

\begin{proof}
 Let $\Phi: (\Cloc(\R_+;H),d_0^+) \to \R$ be a Lipschitz function with $\|\Phi\|_{\Lip} \leq 1$. Notice that
 \begin{multline}\label{estdiffmeasmuWJmu}
  \left| \int_{\Cloc(\R_+;H)} \Phi(\bu) \rd (\tau_t \circ W_+ \circ \mJ) \mu_{\fTB^+}(\bu) - \int_{\Cloc(\R_+;H)} \Phi(\bu) \rd (\tau_t \mu)(\bu)\right| \\
  = \left| \int_{\fTB^+} \Phi((\tau_t\circ  W_+ \circ \mJ) \bu) \rd \mu(\bu) - \int_{\fTB^+} \Phi(\tau_t \bu) \rd \mu (\bu)\right| \\
  \leq \int_{\fTB^+} d_0^+((\tau_t \circ W_+ \circ \mJ) \bu, \tau_t \bu) \rd \mu (\bu) \leq  \int_{\fTB^+} \sup_{s \geq 0} \|[\tau_t \circ W_+ \circ \mJ(\bu) - \tau_t \bu](s)\|_{L^2} \rd \mu(\bu) \\
  \leq  \int_{\fTB^+} \sup_{s\geq t} \|[W_+\circ  \mJ (\bu) - \bu](s)\|_{L^2} \rd \mu(\bu) \leq \sqrt{2} \nu \kappa_0 G \Exp^{-\frac{\beta \nu \kappa_0^2}{4}t},
 \end{multline}
where, in the last inequality we used item \eqref{thmpropsW+iiip} of Theorem \ref{thmpropsW+}. Since $\Phi$ is an arbitrary Lipschitz function on $(\Cloc(\R_+;H),d_0^+)$ with $\|\Phi\|_{\Lip} \leq 1$, we conclude that
\[
 \Gamma_H ((\tau_t\circ  W_+ \circ \mJ)\mu_{\fTB^+}, \tau_t \mu) \leq \sqrt{2} \nu \kappa_0 G \Exp^{-\frac{\beta \nu \kappa_0^2}{4}t},
\]
which proves \eqref{thmDAmeasTBi}.

In order to prove \eqref{thmDAmeasTBii}, let $\varphi: H \to \R$ be a Lipschitz function with $\|\varphi\|_{\Lip} \leq 1$. Notice that
\begin{multline}
 \left| \int_H \varphi(\bu) \rd (\mE_t\circ  W_+\circ \mJ) \mu_{\fTB^+}(\bu) - \int_H \varphi(\bu) \rd (\mE_t \mu)(\bu)\right| \\
 = \left| \int_{\fTB^+} \varphi(\mE_t \circ W_+ \circ \mJ (\bu) \rd \mu (\bu) - \int_{\fTB^+} \varphi(\mE_t \bu) \rd \mu(\bu)\right| \\
 \leq \int_{\fTB^+} \|\mE_t \circ W_+ \circ \mJ( \bu) - \mE_t\bu\|_{L^2} \rd \mu(\bu) \leq \int_{\fTB^+} \sup_{s\geq t}\|[W_+\circ \mJ (\bu) - \bu](s)\|_{L^2} \rd \mu(\bu).
\end{multline}

The remaining of the proof follows analogously as in the previous item.
\end{proof}

The next result shows that, when $J$ is either a Type I interpolant operator or a Type II interpolant operator satisfying the stronger property \eqref{type2b}, then we can prove that, given a trajectory statistical solution $\mu$ on $\R_+$, the asymptotic convergence of translations or evaluations in time of $(W_+ \circ \mJ)\mu$ to $\mu$ is valid in the sense of distributions. Observe that convergence in the Kantorovich metric implies convergence in distribution (i.e., weak convergence of probability measures) provided the underlying measures are carried by a compact set (see, e.g., \cite[Theorem 11.3.3]{Dudley2002}). Under the assumption that $J$ is a Type I interpolant, or a Type II interpolant satisfying the stronger condition \eqref{type2b}, the associated mapping $\mJ$ is continuous between appropriate spaces. Therefore, the push forward of measures carried by a compact set is also carried by a compact set. Thus, convergence in the Kantorovich metric implies convergence in distribution in this case. This is stated in the Corollary below.
\begin{cor}\label{corDAmeasTB}
 Let $J$ be an interpolant operator satisfying either \eqref{type1} and \eqref{h1type1} or \eqref{type2b} and \eqref{h1type2}. Let $\rho > 0$, $\beta > 0$ and $h >0$ satisfying conditions \eqref{condbeta} and \eqref{condbetah}. Moreover, in case $J$ satisfies \eqref{type1} and \eqref{h1type1}, assume that $\rho$ satisfies \eqref{condrhoballa}; and in case $J$ satisfies \eqref{type2b} and \eqref{h1type2}, assume that $\rho$ satisfies \eqref{condrhoballb}. Let $\mu \in \mP(\Cloc(\R_+;H),d_0^+)$ be a trajectory statistical solution on $\R_+$ which is carried by $\fTB^+$. Then, the following properties hold:
 \begin{enumerate}[(i)]
  \item\label{corDAmeasTBi} For every continuous function $\Phi: \Cloc(\R_+,H) \to \R$,
   \begin{multline}
    \lim_{t \to \infty} \left| \int_{\Cloc(\R_+;H)} \Phi(\bu) \rd (\tau_t \circ W_+ \circ \mJ) \mu_{\fTB^+}(\bu) - \int_{\Cloc(\R_+;H)} \Phi(\bu) \rd (\tau_t \mu)(\bu)\right|\\
     = 0;
   \end{multline}
  \item\label{corDAmeasTBii} For every continuous function $\varphi: H \to \R$,
   \be
    \lim_{t \to \infty} \left| \int_H \varphi(\bu) \rd (\mE_t\circ  W_+\circ \mJ) \mu_{\fTB^+}(\bu) - \int_H \varphi(\bu) \rd (\mE_t \mu)(\bu)\right| = 0.
   \ee
 \end{enumerate}
\end{cor}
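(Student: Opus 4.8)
The plan is to deduce the corollary from the Kantorovich-metric convergence already established in Theorem \ref{thmDAmeasTB}, upgrading it to convergence of integrals against arbitrary continuous (not merely Lipschitz) functions. The mechanism is the standard fact that, on a \emph{compact} metric space, the Kantorovich metric metrizes the weak topology on the space of Borel probability measures (see \cite[Theorem 11.3.3]{Dudley2002}), the moment conditions appearing there being automatic because every measure in sight will be carried by a fixed compact set. Thus the argument reduces to two points: (a) exhibiting a \emph{single} compact set that carries all the measures $(\tau_t\circ W_+\circ\mJ)\mu_{\fTB^+}$ and $\tau_t\mu$ (respectively $(\mE_t\circ W_+\circ\mJ)\mu_{\fTB^+}$ and $\mE_t\mu$) for every $t\ge0$; and (b) converting the Kantorovich convergence of the difference into convergence of the integrals of a fixed continuous functional.

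First I would treat the supports. By Lemma \ref{lemTBIcptCIV}, $\fTB^+$ is compact in $\Cloc(\R_+;V)$, hence in $\Cloc(\R_+;H)$, and by hypothesis $\mu$ is carried by it. Since a time shift of a trajectory satisfying the bounds in \eqref{defTB} for all $s\ge0$ still satisfies them, one has $\tau_t(\fTB^+)\subseteq\fTB^+$, so each $\tau_t\mu$ is again carried by the fixed compact set $\fTB^+$. For the forecast measures, conditions \eqref{condrhoballa}--\eqref{condrhoballb} guarantee $\mJ(\fTB^+)\subset B_{X_+}(\rho)$, and by Lemma \ref{lemmeasurability}(\ref{lemmeasurabilityiv}),(\ref{lemmeasurabilityii}) the maps $\mJ$ and $W_+$ are continuous between the relevant $\Cloc$-spaces; hence $\mathcal W:=W_+(\mJ(\fTB^+))$ is compact in $\Cloc(\R_+;V)$. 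It remains to control the translates: by Theorem \ref{thmpropsW+}(\ref{thmpropsW+i}) the whole of $\mathcal W$ lies in $B_{Y_+}(\sqrt M)$, and the defining $V$-bound and translation-bounded $D(A)$-bound of $Y_+$ (cf. Proposition \ref{propexistuniqw}(\ref{estw1}),(\ref{estw2})) are invariant under $\tau_t$. Since each $\tau_t\bw$, $\bw\in\mathcal W$, solves \eqref{dataeqn+} with data $\mJ\tau_t\bu$ ranging in the bounded set $\mJ(\fTB^+)$, the equation yields a uniform bound on $\tfrac{\rd}{\rd s}\tau_t\bw$ in $L^2_{\loc}(\R_+;H)$, and an Arzela--Ascoli argument identical to the one in the proof of Lemma \ref{lemTBIcptCIV} (using $V\hookrightarrow\hookrightarrow H$) shows that $\bigcup_{t\ge0}\tau_t\mathcal W$ is relatively compact in $\Cloc(\R_+;H)$. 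Let $\mathcal K$ be a compact set containing $\fTB^+$ together with this union; then every measure occurring in \eqref{corDAmeasTBi} is carried by $\mathcal K$.

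With a common compact support in hand, step (b) is routine. On the compact metric space $(\mathcal K,d_0^+)$ a continuous $\Phi$ is automatically bounded and uniformly continuous, and its integral depends only on $\Phi|_{\mathcal K}$; moreover $\mP_1(\mathcal K,d_0^+)=\mP(\mathcal K)$ is weakly compact and metrized by $\Gamma_H$. Hence the functional $\nu\mapsto\int\Phi\,\rd\nu$ is weakly continuous on $\mP(\mathcal K)$, therefore uniformly continuous with respect to $\Gamma_H$ on the compact set $\mP(\mathcal K)$; combined with $\Gamma_H\big((\tau_t\circ W_+\circ\mJ)\mu_{\fTB^+},\tau_t\mu\big)\to0$ from Theorem \ref{thmDAmeasTB}(\ref{thmDAmeasTBi}), this forces $\big|\int\Phi\,\rd(\tau_t\circ W_+\circ\mJ)\mu_{\fTB^+}-\int\Phi\,\rd(\tau_t\mu)\big|\to0$, which is \eqref{corDAmeasTBi}. (Equivalently, one argues by contradiction: along any subsequence one extracts weak limits of the two families, which must coincide because $\Gamma_H\to0$ metrizes weak convergence on the compact $\mP(\mathcal K)$, and then the integrals of $\Phi$ agree in the limit.) For \eqref{corDAmeasTBii} the compactness issue is simpler: evaluation lands in $H$, and both $\mE_t\mu$ and $(\mE_t\circ W_+\circ\mJ)\mu_{\fTB^+}$ are carried by the image of a set bounded in $V$, which is compact in $H$ by the Rellich embedding $V\hookrightarrow\hookrightarrow H$; one then repeats the argument with $\gamma_H$ in place of $\Gamma_H$, using Theorem \ref{thmDAmeasTB}(\ref{thmDAmeasTBii}).

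The main obstacle is step (a) for the trajectory measures, i.e. producing a single compact set carrying all the translated forecast measures $(\tau_t\circ W_+\circ\mJ)\mu_{\fTB^+}$, $t\ge0$. This is exactly where the uniform, translation-invariant bounds on $W_+$ together with the equicontinuity-plus-Arzela--Ascoli mechanism of Lemma \ref{lemTBIcptCIV} are needed; it is also what makes the passage from Lipschitz to arbitrary continuous test functions legitimate, since on the resulting compact set every continuous function is bounded. Once this compactness is secured, everything else is the soft measure-theoretic fact that the Kantorovich metric metrizes weak convergence of probability measures over a compact metric space.
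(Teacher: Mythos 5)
Your proof is correct, and its skeleton is exactly the argument by which the paper justifies this corollary: the paper gives no separate proof, but relies on the remark preceding the statement, namely that convergence in the Kantorovich metric implies convergence in distribution when the measures are carried by a compact set (\cite[Theorem 11.3.3]{Dudley2002}), combined with continuity of $\mJ$ and $W_+$ (Lemma \ref{lemmeasurability}), compactness of $\fTB^+$ (Lemma \ref{lemTBIcptCIV}), and the Kantorovich convergence of Theorem \ref{thmDAmeasTB}. What you add---and what the paper glosses over---is the uniformity in $t$: since \emph{both} families $(\tau_t\circ W_+\circ\mJ)\mu_{\fTB^+}$ and $\tau_t\mu$ move with $t$, a compact carrier for each fixed $t$ is not enough (on $\R$, the Kantorovich distance between $\delta_t$ and $\delta_{t+1/t}$ tends to $0$, yet the integrals of the bounded continuous function $x\mapsto\sin(x^3)$ against these two measures need not come together), so one genuinely needs a single compact set carrying all the measures before invoking uniform continuity of $\nu\mapsto\int\Phi\,\rd\nu$ on the compact metric space $(\mP(\mathcal{K}),\Gamma_H)$, as you do, or equivalently before approximating $\Phi$ uniformly by Lipschitz functions on that set. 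Your Arzel\`a--Ascoli construction of this common carrier is sound: the uniform $Y_+$-bounds give equiboundedness in $V$, and equation \eqref{dataeqn+} then yields a uniform $L^2_{\loc}(\R_+;H)$ bound on time derivatives, hence equicontinuity into $H$. Note, however, that the same conclusion is available more cheaply from results already in the paper: $\mJ$ commutes with $\tau_t$, $W_+$ commutes with $\tau_t$ by item \eqref{thmpropsW+v} of Theorem \ref{thmpropsW+}, and $\tau_t(\fTB^+)\subseteq\fTB^+$, so every measure $(\tau_t\circ W_+\circ\mJ)\mu_{\fTB^+}$ is carried by the single compact set $W_+(\mJ(\fTB^+))$; your longer route has the compensating merit of not depending on that commutation property. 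Your treatment of item \eqref{corDAmeasTBii} via the boundedness in $V$ of all evaluations and the compact embedding of $V$ in $H$ is likewise correct.
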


\begin{rmk}
Note that for any Borel measure $\eta$ on $\Cloc(\R_+,H)$, $\tau_t \eta$ is the time shifted measure on paths starting at time $t$. The above theorem says that the measures
$W_+ \circ \mJ \circ \mu$, which is constructed by the data assimilation algorithm from the observed measure $\mJ \circ \mu$, and $\mu$ converge when shifted by time $t$ as $t \ra \infty$. This is precisely the data assimilation algorithm for observed path space measures.
\end{rmk}

\subsection{Determining parameters for statistical solutions}\label{subsecDetParamStatSol}

In this subsection, we show that statistical solutions of the 2D NSE can be determined by a finite number of parameters, which are represented through the finite-rank interpolant operator $J$ (of Type I or Type II). This type of result, as is well-known, holds for individual solutions of the 2D NSE, and thus it is natural to expect it to hold for statistical solutions as well. The concept of data assimilation by the ``nudging approach"
initiated in \cite{AzouaniOlsonTiti2014} was motivated by the existence of determining parameters for dissipative systems. However,
as in the case of individual trajectories for the MHD equation in \cite{BHLP},
here we use the data assimilation algorithm for statistical solutions to establish the existence
of determining parameters for statistical solutions.


We start by showing this type of result for trajectory statistical solutions on $\R$ which are carried by the set of trajectories in the global attractor $\mA$, denoted by

\be\label{defTA}
\fTA = \{\bu \in \fT^{\R} \,:\, \bu(t) \in \mA \,\,\, \forall t \in \R \}.
\ee

\comments{Notice that $\fTA$ is contained in the compact set $\mS(B_H(\nu G))$. Hence, since $\fTA$ is also closed in $\Cloc(I,H)$, it follows in particular that $\fTA$ is compact and thus a Borel set in $\Cloc(I,H)$. Moreover,}  Similarly as in the proof of Lemma \ref{lemTBIcptCIV}, we can prove that $\fTA$ is compact in $\Cloc(\R,V)$.
Thus, $\fTA$ is compact, and therefore a Borel subset, of $\Cloc(\R,H)$.

\begin{thm}\label{thmDAmeasA}
 Let $J$ be an interpolant operator satisfying either \eqref{type1} and \eqref{h1type1} or \eqref{type2b} and \eqref{h1type2}. Let $\rho > 0$, $\beta > 0$ and $h >0$ satisfying conditions \eqref{condbeta} and \eqref{condbetah}. In case $J$ satisfies \eqref{type1} and \eqref{h1type1}, assume that $\rho$ satisfies \eqref{condrhoa}; and in case $J$ satisfies \eqref{type2b} and \eqref{h1type2}, assume that $\rho$ satisfies \eqref{condrho}. Let $\mu$ and $\eta$ be two trajectory statistical solutions carried by $\fTA$. If $\mJ \mu_{\fTA} = \mJ \eta_{\fTA}$, then $\mu = \eta$.
\end{thm}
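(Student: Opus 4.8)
The plan is to deduce $\mu=\eta$ from the single structural fact, already proved in Theorem \ref{thmpropsW} item \eqref{thmpropsWiv}, that the determining map $W$ is a left inverse of $\mJ$ on trajectories in the global attractor, together with routine push-forward bookkeeping. Once we know that $W\circ\mJ$ restricts to the identity on $\fTA$, applying the push-forward by $W$ to the hypothesis $\mJ\mu_{\fTA}=\mJ\eta_{\fTA}$ will immediately recover $\mu=\eta$.

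First I would check that $W\circ\mJ$ is well defined and equals the identity on $\fTA$. For $\bu\in\fTA$ the bounds \eqref{boundmAH1} and \eqref{boundmADA} hold along the whole trajectory, so $\fTA\subset\fTB^{\R}$; moreover the estimate carried out in the proof of Theorem \ref{thmpropsW}\eqref{thmpropsWiv} shows $\|\mJ\bu\|_X\le\rho$, precisely because $\rho$ is assumed to satisfy \eqref{condrhoa} (Type I) or \eqref{condrho} (Type II). Hence $\mJ\bu\in B_X(\rho)$, so $W(\mJ\bu)$ is defined, and that same item yields $W(\mJ\bu)=\bu$ for every $\bu\in\fTA$, i.e.\ $W\circ\mJ=\mathrm{id}_{\fTA}$.

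Next I would record the measurability needed to form the relevant push-forwards. Since $\fTA\subset\fTB^{\R}$, Lemma \ref{lemmeasurability}\eqref{lemmeasurabilityiv} gives that $\mJ$ is continuous from $\fTA$ (with the topology inherited from $\Cloc(\R,H)$) into $\Cloc(\R,(\dot{L}^2(\Omega))^2)$, with image contained in $B_X(\rho)$, while Lemma \ref{lemmeasurability}\eqref{lemmeasurabilityii} gives that $W:B_X(\rho)_\locH\to\Cloc(\R;V)\hookrightarrow\Cloc(\R;H)$ is continuous. Thus $W\circ\mJ$ is Borel measurable on the carrying set $\fTA$, and the measures $\mJ\mu_{\fTA}$, $W(\mJ\mu_{\fTA})$ (and the analogues for $\eta$) are all well-defined Borel measures. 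Then, because $\mu$ is carried by $\fTA$ and $W\circ\mJ$ agrees with the identity there, the elementary computation
\[
 (W\circ\mJ)\mu_{\fTA}(E)=\mu_{\fTA}\bigl(\{\bu\in\fTA: (W\circ\mJ)\bu\in E\}\bigr)=\mu_{\fTA}(E\cap\fTA)=\mu(E)
\]
for every Borel $E$ shows $(W\circ\mJ)\mu_{\fTA}=\mu$; by functoriality of push-forwards, $\mu=W(\mJ\mu_{\fTA})$, and likewise $\eta=W(\mJ\eta_{\fTA})$. The hypothesis $\mJ\mu_{\fTA}=\mJ\eta_{\fTA}$ then gives $\mu=W(\mJ\mu_{\fTA})=W(\mJ\eta_{\fTA})=\eta$.

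I expect the main (and rather modest) difficulty to be measure-theoretic bookkeeping rather than any new analysis: one must confirm that $\mJ(\fTA)\subset B_X(\rho)$ so that $W\circ\mJ$ is genuinely defined on all of $\fTA$, that the topologies invoked in Lemma \ref{lemmeasurability} are exactly the ones making $W\circ\mJ$ a Borel self-map of $\Cloc(\R,H)$, and that passing to the restriction $\mu_{\fTA}$ (legitimate since $\mu(\fTA)=1$) commutes with the push-forwards as used above. All of the genuinely hard work—the left-inverse identity $W\circ\mJ=\mathrm{id}$ on $\fTA$ and the continuity of $W$ and $\mJ$—has already been supplied by Theorem \ref{thmpropsW} and Lemma \ref{lemmeasurability}.
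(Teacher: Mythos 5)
Your proposal is correct and follows essentially the same route as the paper: both arguments rest on the left-inverse identity $W \circ \mJ = \mathrm{id}$ on $\fTA$ from Theorem \ref{thmpropsW}, item \eqref{thmpropsWiv} (available because \eqref{condrhoa}, respectively \eqref{condrho}, together with \eqref{boundmAH1} and \eqref{boundmADA}, guarantee $\mJ(\fTA) \subset B_X(\rho)$), followed by push-forward bookkeeping using the hypothesis $\mJ\mu_{\fTA} = \mJ\eta_{\fTA}$. The only difference is presentational: the paper verifies $\mu = \eta$ by testing against bounded continuous functions $\Phi$ and applying the change-of-variables formula for the push-forward $\mJ\mu_{\fTA}$, whereas you work directly at the level of Borel sets via functoriality of push-forwards, being somewhat more explicit about the measurability input from Lemma \ref{lemmeasurability}; both versions are routine once the left-inverse identity is in hand.
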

\begin{proof}
Let $\Phi \in \Cb(\Cloc(\R;H))$. It suffices to prove that (see, e.g., \cite[Lemma 9.3.2]{Dudley2002})
\be\label{eqmeasmueta}
 \int_{\Cloc(\R;H)} \Phi(\bu) \rd \mu(\bu) = \int_{\Cloc(\R;H)} \Phi(\bu) \rd \eta(\bu).
\ee
Observe that due to \eqref{condrhoa} or \eqref{condrho}, as well as \eqref{boundmAH1} and \eqref{boundmADA}, $\mJ u \in B_\rho(X)$ for $u \in \fTA$.
Thus, $W \circ \mJ $ is defined for $\bu \in \fTA$.
Since $\mu$ is carried by $\fTA$, using item \eqref{thmpropsWiv} of Theorem \ref{thmpropsW}, we obtain that
\begin{multline}\label{eqintmu}
 \int_{\Cloc(\R;H)} \Phi(\bu) \rd \mu(\bu) =  \int_{\fTA} \Phi(\bu) \rd \mu(\bu) =  \int_{\fTA} \Phi(W \circ \mJ (\bu)) \rd \mu(\bu)\\
 =  \int_{\mJ(\fTA)} \Phi(W (\bv)) \rd (\mJ\mu)(\bv).
\end{multline}

Since, by hypothesis, $\mJ\mu_{\fTA} = \mJ \eta_{\fTA}$, then
\be
 \int_{\mJ(\fTA)} \Phi(W (\bv)) \rd (\mJ\mu)(\bv) = \int_{\mJ(\fTA)} \Phi(W (\bv)) \rd (\mJ\eta)(\bv)
\ee

But, analogously to \eqref{eqintmu}, we have
\be\label{eqinteta}
 \int_{\mJ(\fTA)} \Phi(W (\bv)) \rd (\mJ\eta)(\bv) =  \int_{\Cloc(\R;H)} \Phi(\bu) \rd \eta(\bu)
\ee

Thus, from \eqref{eqintmu}-\eqref{eqinteta}, we conclude \eqref{eqmeasmueta}.
\end{proof}

\begin{rmk}
We observe that if $\mu$ is a trajectory statistical solution on $\R$ which is invariant under the action of the semigroup of translations $\{\tau_t\}_{t \geq 0}$ (i.e., $\tau_t \mu = \mu$, for all $t \geq 0$) then $\mu$ is carried by $\fTA$. Indeed, for every $t \geq 0$, we have $(S(t) \circ \mE_0) \mu = \mE_t \mu =  (\mE_0 \circ \tau_t )\mu = \mE_0 \mu$. Thus, $\mE_0 \mu$ is an invariant measure with respect to the semigroup $\{ S(t)\}_{t \geq 0}$, which implies that $\mE_0 \mu (\mA) = 1$ (\cite[Theorem IV.4.1]{FMRT2001}). Notice that, by the definition of $\mA$ (in particular, item 4), it follows that the mapping  $\mE_0|_{\fTA}:\fTA \ra \mA$ is surjective. Consequently, we have
\[
 \mu (\fTA) = \mu(\mE_0^{-1} \circ \mE_0 (\fTA)) \geq \mu(\mE_0^{-1} (\mA)) = \mE_0 \mu (\mA) = 1.
\]
\end{rmk}

Now, in the following theorem, we consider the case of trajectory statistical solutions on $\R_+$ which are carried by the set $\fTB^+$, given in \eqref{defTB}. We show that the translations in time of such measures can be determined by a finite number of parameters.

Recall that $\Gamma_H$ denotes the Kantorovich metric on $\mP_1(\Cloc(\R_+,H),d_0^+)$.

\begin{thm}\label{thmdetquantmeas}
 Let $J$ be an interpolant operator satisfying either \eqref{type1} and \eqref{h1type1} or \eqref{type2b} and \eqref{h1type2}. Let $\rho > 0$, $\beta > 0$ and $h >0$ satisfying conditions \eqref{condbeta} and \eqref{condbetah}. Moreover, in case $J$ satisfies \eqref{type1} and \eqref{h1type1}, assume that $\rho$ satisfies \eqref{condrhoballa}; and in case $J$ satisfies \eqref{type2b} and \eqref{h1type2}, assume that $\rho$ satisfies \eqref{condrhoballb}. Let $\mu, \eta \in \mP(\Cloc(\R_+;H),d_0^+)$ be two trajectory statistical solutions on $\R_+$ which are carried by $\fTB^+$. If
 \be\label{thmdetquantmeashyp}
  \lim_{t \to \infty}  \Gamma_H( (\tau_t \circ \mJ) \mu_{\fTB^+}, (\tau_t \circ \mJ) \eta_{\fTB^+}) = 0,
 \ee
 then
 \be\label{thmdetquantmeasres}
  \lim_{t \to \infty}  \Gamma_H ( \tau_t \mu, \tau_t \eta) = 0.
 \ee
\end{thm}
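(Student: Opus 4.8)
The plan is to interpolate between $\tau_t \mu$ and $\tau_t \eta$ through the data-assimilated measures, exploiting the Lipschitz continuity of $W_+$ together with the asymptotic reconstruction property of Theorem \ref{thmDAmeasTB}. Writing $A_t^\mu := (\tau_t \circ W_+ \circ \mJ)\mu_{\fTB^+}$ and $A_t^\eta := (\tau_t \circ W_+ \circ \mJ)\eta_{\fTB^+}$, the triangle inequality for $\Gamma_H$ gives
\[
\Gamma_H(\tau_t\mu, \tau_t\eta) \le \Gamma_H(\tau_t\mu, A_t^\mu) + \Gamma_H(A_t^\mu, A_t^\eta) + \Gamma_H(A_t^\eta, \tau_t\eta).
\]
Since $\mu$ and $\eta$ are carried by $\fTB^+$ and $\rho$ satisfies \eqref{condrhoballa} or \eqref{condrhoballb}, the outer two terms are precisely the quantities shown to tend to zero (in fact exponentially) in item \eqref{thmDAmeasTBi} of Theorem \ref{thmDAmeasTB}. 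The whole matter therefore reduces to controlling the middle term $\Gamma_H(A_t^\mu, A_t^\eta)$ by the hypothesis \eqref{thmdetquantmeashyp}.

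For the middle term I would first commute the translation past the other maps. Since $\mJ$ acts pointwise in time, $\tau_t \circ \mJ = \mJ \circ \tau_t$; and by item \eqref{thmpropsW+v} of Theorem \ref{thmpropsW+}, $\tau_t \circ W_+ = W_+ \circ \tau_t$. Because $\fTB^+$ is forward invariant under $\tau_t$ (the bounds in \eqref{defTB} persist for all later times) and $\mJ(\fTB^+)\subset B_{X_+}(\rho)$ under \eqref{condrhoballa}/\eqref{condrhoballb}, all compositions below are well defined, and
\[
A_t^\mu = (W_+ \circ \mJ \circ \tau_t)\mu_{\fTB^+} = W_+\bigl(\mJ(\tau_t\mu_{\fTB^+})\bigr),
\]
and likewise for $\eta$. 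The next ingredient is that $W_+$ is Lipschitz as a map into $(\Cloc(\R_+;H), d_0^+)$: the estimate \eqref{ineqW+Lipschitz} bounds $d_1^+(W_+\bv_1, W_+\bv_2)$ by $(8\beta)^{1/2} d_0^+(\bv_1,\bv_2)$, and since the outputs lie in $\Cloc(\R_+;V)$ the Poincar\'e inequality \eqref{ineqPoincare}, through the monotonicity of $x \mapsto x/(a+x)$ in \eqref{defd0}--\eqref{defd1}, yields $d_0^+ \le d_1^+$ on such outputs. Hence $W_+$ is $(8\beta)^{1/2}$-Lipschitz from $(B_{X_+}(\rho), d_0^+)$ to $(\Cloc(\R_+;H), d_0^+)$.

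With $W_+$ Lipschitz for $d_0^+$ on both sides, the standard contraction property of push-forwards under the Kantorovich metric (a $1$-Lipschitz test function composed with an $L$-Lipschitz map is $L$-Lipschitz, so the supremum defining $\Gamma_H$ contracts by $L$) gives
\[
\Gamma_H(A_t^\mu, A_t^\eta) = \Gamma_H\bigl(W_+(\mJ(\tau_t\mu_{\fTB^+})),\, W_+(\mJ(\tau_t\eta_{\fTB^+}))\bigr) \le (8\beta)^{1/2}\,\Gamma_H\bigl(\mJ(\tau_t\mu_{\fTB^+}),\, \mJ(\tau_t\eta_{\fTB^+})\bigr),
\]
and commuting $\mJ$ with $\tau_t$ once more, the right-hand side equals $(8\beta)^{1/2}\Gamma_H((\tau_t\circ\mJ)\mu_{\fTB^+}, (\tau_t\circ\mJ)\eta_{\fTB^+})$, which tends to zero by hypothesis \eqref{thmdetquantmeashyp}. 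Combining the three estimates yields \eqref{thmdetquantmeasres}.

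I expect the main obstacle to be bookkeeping rather than a genuine difficulty: one must check that every push-forward measure is supported where the maps are defined (hence the invariance of $\fTB^+$ under $\tau_t$ and the role of \eqref{condrhoballa}/\eqref{condrhoballb}), and one must reconcile the two metrics $d_0^+$ and $d_1^+$, since the Lipschitz estimate for $W_+$ naturally produces a $d_1^+$ bound on the range whereas $\Gamma_H$ is built from $d_0^+$; the comparison $d_0^+ \le d_1^+$ is the essential glue. Everything else is the triangle inequality together with the already-established Theorem \ref{thmDAmeasTB}.
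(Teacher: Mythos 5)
Your proposal is correct and follows essentially the same route as the paper's proof: the same triangle inequality through the assimilated measures $(\tau_t \circ W_+ \circ \mJ)\mu_{\fTB^+}$ and $(\tau_t \circ W_+ \circ \mJ)\eta_{\fTB^+}$, the outer terms killed by item \eqref{thmDAmeasTBi} of Theorem \ref{thmDAmeasTB}, and the middle term contracted by the $(8\beta)^{1/2}$-Lipschitz bound \eqref{ineqW+Lipschitzb} for $W_+$ in the $d_0^+$ metric (via \eqref{ineqW+Lipschitz} and Poincar\'e) together with the commutation $\tau_t \circ W_+ = W_+ \circ \tau_t$ from Theorem \ref{thmpropsW+}\eqref{thmpropsW+v}. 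The only cosmetic difference is that you also commute $\tau_t$ past $\mJ$ and back, whereas the paper keeps $\tau_t \circ \mJ$ intact, which changes nothing.
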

\begin{proof}
Notice that
\begin{multline}\label{rewritekappa0}
  \Gamma_H( \tau_t \mu, \tau_t \eta) \leq  \Gamma_H( \tau_t \mu, (\tau_t\circ  W_+ \circ \mJ) \mu_{\fTB^+}) +  \Gamma_H( (\tau_t\circ W_+ \circ \mJ)\mu_{\fTB^+}, (\tau_t\circ W_+ \circ \mJ) \eta_{\fTB^+}) \\
  +  \Gamma_H( \tau_t \eta, (\tau_t\circ  W_+\circ \mJ) \eta_{\fTB^+}).
\end{multline}

Let $\Phi: (\Cloc(\R_+;H),d_0^+) \to \R$ be a Lipschitz function with $\|\Phi\|_{\Lip} \leq 1$.

Using inequality \eqref{ineqW+Lipschitz}  for $W_+$ and the Poincar\'e inequality, we have
\be\label{ineqW+Lipschitzb}
 d_0^+(W_+(\bv_1), W_+(\bv_2)) \leq (8 \beta)^{1/2} d_0^+(\bv_1,\bv_2) \quad \forall \bv_1, \bv_2 \in B_{X_+}(\rho).
\ee
This implies that $\Phi \circ W_+: (B_{X_+}(\rho),d_0^+) \to \R$ is a Lipschitz function with $\|\Phi \circ W_+\|_{\Lip} \leq (8 \beta)^{1/2}$. Therefore,
\begin{multline}
 \left| \int_{\Cloc(\R_+;H)} \Phi(\bu) \rd (W_+ \circ \tau_t \circ \mJ) \mu_{\fTB^+}(\bu) - \right.\\
 \left. \int_{\Cloc(\R_+;H)} \Phi(\bu) \rd (W_+\circ \tau_t\circ \mJ) \eta_{\fTB^+}(\bu)  \right| \\
 =  \left| \int_{B_{X_+}(\rho)} \Phi(W_+ (\bv)) \rd (\tau_t\circ \mJ) \mu_{\fTB^+}(\bv) - \int_{B_{X_+}(\rho)} \Phi(W_+ (\bv)) \rd ( \tau_t\circ \mJ) \eta_{\fTB^+}(\bv)  \right|\\ \leq (8 \beta)^{1/2} \Gamma_H ( (\tau_t\circ \mJ) \mu_{\fTB^+}, (\tau_t \circ \mJ) \eta_{\fTB^+}).
\end{multline}
Since $\Phi$ is an arbitrary Lipschitz function with $\|\Phi\|_{\Lip} \leq 1$, we then have
\begin{multline}\label{ineqkappa0c}
 \Gamma_H( (\tau_t\circ  W_+\circ \mJ) \mu_{\fTB^+}, (\tau_t\circ W_+\circ \mJ) \eta_{\fTB^+}) = \Gamma_H(( W_+ \circ\tau_t \circ \mJ) \mu_{\fTB^+}, (W_+ \circ \tau_t \circ \mJ) \eta_{\fTB^+}) \\
 \leq (8 \beta)^{1/2} \Gamma_H ( (\tau_t\circ \mJ) \mu_{\fTB^+}, (\tau_t \circ \mJ) \eta_{\fTB^+}),
\end{multline}
where we used that $W_+$ commutes with $\tau_t$ (see Theorem \ref{thmpropsW+}, \eqref{thmpropsW+v}).

Thus, \eqref{thmdetquantmeasres} follows from \eqref{rewritekappa0}, by using item \eqref{thmDAmeasTBi} of Theorem \ref{thmDAmeasTB}, \eqref{ineqkappa0c} and hypothesis \eqref{thmdetquantmeashyp}.
\end{proof}

Finally, still considering the case of trajectory statistical solutions on $\R_+$ which are carried by $\fTB^+$, we now prove that its evaluations in time can be determined by a finite number of parameters.


\begin{thm}\label{cordetquantmeas}
 Under the hypotheses of Theorem \ref{thmdetquantmeas}, if
 \be\label{cordetquantmeashyp}
   \lim_{t \to \infty}  \Gamma_H( (\tau_t\circ \mJ) \mu_{\fTB^+}, (\tau_t\circ \mJ) \eta_{\fTB^+}) = 0,
 \ee
 then
 \be\label{cordetquantmeasres}
   \lim_{t \to \infty}  \gamma_H ( \mE_t \mu, \mE_t \eta) = 0.
 \ee
\end{thm}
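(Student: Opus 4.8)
The plan is to deduce the statement directly from Theorem~\ref{thmdetquantmeas} by comparing the two Kantorovich metrics $\gamma_H$ and $\Gamma_H$. Since the hypothesis \eqref{cordetquantmeashyp} is identical to \eqref{thmdetquantmeashyp}, Theorem~\ref{thmdetquantmeas} already gives $\Gamma_H(\tau_t\mu,\tau_t\eta)\to 0$ as $t\to\infty$. The evaluation at time $t$ factors as $\mE_t=\mE_0\circ\tau_t$, so that $\mE_t\mu=\mE_0(\tau_t\mu)$ and $\mE_t\eta=\mE_0(\tau_t\eta)$; thus it suffices to establish an inequality of the form
\[
 \gamma_H(\mE_t\mu,\mE_t\eta)\le L\,\Gamma_H(\tau_t\mu,\tau_t\eta),
\]
with a constant $L=L(\nu,G)$ independent of $t$. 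The desired limit \eqref{cordetquantmeasres} is then immediate.

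The key step is to show that $\mE_0$, restricted to $\fTB^+$, is Lipschitz from $(\fTB^+,d_0^+)$ into $(H,\|\cdot\|_{L^2})$. The main subtlety is that $d_0^+$ is a bounded metric while $\|\cdot\|_{L^2}$ is unbounded, so $\mE_0$ is not globally Lipschitz on $\Cloc(\R_+;H)$; this is exactly where I use that $\mu,\eta$ are carried by $\fTB^+$. For $\bu,\bv\in\fTB^+$, only the first ($n=1$) term of the series defining $d_0^+$ is needed: since $0\in K_1^+=[0,(\nu\kappa_0^2)^{-1}]$ and $x\mapsto x/(\nu+x)$ is increasing,
\[
 d_0^+(\bu,\bv)\ge \frac12\,\frac{\sup_{t\in K_1^+}\|\bu(t)-\bv(t)\|_{L^2}}{\nu+\sup_{t\in K_1^+}\|\bu(t)-\bv(t)\|_{L^2}}\ge \frac12\,\frac{\|\bu(0)-\bv(0)\|_{L^2}}{\nu+\|\bu(0)-\bv(0)\|_{L^2}}.
\]
By the uniform bound $\|\nabla\bu(t)\|_{L^2}\le\sqrt2\,\nu\kappa_0 G$ on $\fTB^+$ and the Poincar\'e inequality \eqref{ineqPoincare}, one has $\|\bu(0)-\bv(0)\|_{L^2}\le 2\sqrt2\,\nu G=:D$, so $\nu+\|\bu(0)-\bv(0)\|_{L^2}\le\nu+D$ and therefore $\|\bu(0)-\bv(0)\|_{L^2}\le 2(\nu+D)\,d_0^+(\bu,\bv)$. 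Hence $\mE_0|_{\fTB^+}$ is Lipschitz with constant $L:=2(\nu+D)=2\nu(1+2\sqrt2\,G)$.

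To conclude, I fix a Lipschitz $\varphi:H\to\R$ with $\|\varphi\|_{\Lip}\le1$ and set $\Phi:=\varphi\circ\mE_0$, which by the previous step is Lipschitz on $\fTB^+$ with $\|\Phi\|_{\Lip}\le L$ relative to $d_0^+$. By the McShane--Whitney extension theorem, $\Phi$ extends to $\tilde\Phi$ on all of $\Cloc(\R_+;H)$ with the same Lipschitz constant. Because the bounds defining $\fTB^+$ are preserved under forward translation, $\tau_t(\fTB^+)\subseteq\fTB^+$, so $\tau_t\mu$ and $\tau_t\eta$ are carried by $\fTB^+$, where $\tilde\Phi=\Phi$; the change-of-variables formula for push-forwards then gives
\[
 \left|\int_H\varphi\,\rd\mE_t\mu-\int_H\varphi\,\rd\mE_t\eta\right|
 =\left|\int\tilde\Phi\,\rd(\tau_t\mu)-\int\tilde\Phi\,\rd(\tau_t\eta)\right|
 \le L\,\Gamma_H(\tau_t\mu,\tau_t\eta),
\]
the last inequality applying the definition of $\Gamma_H$ to $\tilde\Phi/L$. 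Taking the supremum over all such $\varphi$ yields the displayed comparison inequality, and letting $t\to\infty$ together with Theorem~\ref{thmdetquantmeas} finishes the proof. The main obstacle is the second step: reconciling the bounded metric $d_0^+$ with the unbounded $L^2$ distance, which is what forces the use of the uniform-in-time bounds valid on $\fTB^+$ and a Lipschitz extension rather than a global argument.
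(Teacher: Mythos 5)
Your proof is correct, and it takes a genuinely different route from the paper's. The paper never invokes Theorem \ref{thmdetquantmeas} here: it reruns that theorem's triangle-inequality argument at the level of evaluations, bounding $\gamma_H(\mE_t\mu,\mE_t\eta)$ by the two terms $\gamma_H(\mE_t\mu,(\mE_t\circ W_+\circ\mJ)\mu_{\fTB^+})$ and $\gamma_H(\mE_t\eta,(\mE_t\circ W_+\circ\mJ)\eta_{\fTB^+})$, which decay exponentially by item \eqref{thmDAmeasTBii} of Theorem \ref{thmDAmeasTB}, plus the middle term $\gamma_H((\mE_t\circ W_+\circ\mJ)\mu_{\fTB^+},(\mE_t\circ W_+\circ\mJ)\eta_{\fTB^+})$, which it controls by $(8\beta)^{1/2}\nu(1+2\rho)\,\Gamma_H((\tau_t\circ\mJ)\mu_{\fTB^+},(\tau_t\circ\mJ)\eta_{\fTB^+})$ using the Lipschitz property of $\varphi\circ\mE_0\circ W_+$ on $(B_{X_+}(\rho),d_0^+)$ (inequalities \eqref{ineqW+Lipschitzb} and \eqref{ineqpropd0+}) together with the commutation $\tau_t\circ W_+=W_+\circ\tau_t$. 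You instead reduce the statement formally to Theorem \ref{thmdetquantmeas} via the comparison $\gamma_H(\mE_0\mu',\mE_0\eta')\le L\,\Gamma_H(\mu',\eta')$ with $L=2\nu(1+2\sqrt{2}\,G)$, valid for all measures $\mu',\eta'$ carried by $\fTB^+$, applied to $\mu'=\tau_t\mu$ and $\eta'=\tau_t\eta$ (legitimately, since $\fTB^+$ is forward-translation invariant). Both arguments rest on the same elementary fact, namely that $\mE_0$ is Lipschitz from a uniformly $H$-bounded set of trajectories, metrized by $d_0^+$, into $H$ --- your $n=1$ estimate is exactly the paper's \eqref{ineqpropd0+}, and your bookkeeping of the factor coming from the weight $2^{-1}$ is in fact more careful than the paper's statement of that inequality --- but the paper applies it after $W_+$, on $B_{X_+}(\rho)$, whereas you apply it directly on $\fTB^+$, which is why you need the McShane--Whitney extension to turn Lipschitz functions on $\fTB^+$ into admissible test functions in the definition of $\Gamma_H$. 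What your route buys: it is shorter and modular, making the evaluation statement a corollary of the translation statement for any forward-invariant, uniformly $H$-bounded carrier set, with a constant independent of $\beta$ and $\rho$. What the paper's route buys: it keeps the assimilation map $W_+\circ\mJ$ explicit and reads the convergence rate directly off the observed coarse-mesh statistics $\Gamma_H((\tau_t\circ\mJ)\mu_{\fTB^+},(\tau_t\circ\mJ)\eta_{\fTB^+})$, a rate your argument recovers only after unwinding the proof of Theorem \ref{thmdetquantmeas}.
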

\begin{proof}
We have
\begin{multline}\label{rewritekappaH}
 \gamma_H ( \mE_t \mu, \mE_t \eta) \leq \gamma_H ( \mE_t \mu, (\mE_t\circ W_+\circ \mJ) \mu_{\fTB^+}) \\
 + \gamma_H ((\mE_t\circ W_+\circ \mJ) \mu_{\fTB^+}, (\mE_t\circ W_+\circ \mJ) \eta_{\fTB^+}) 
 + \gamma_H ( \mE_t \eta, (\mE_t\circ W_+\circ \mJ) \eta_{\fTB^+} ).
\end{multline}

Notice that
\be\label{rewriteEt}
 \mE_t = \mE_t \circ \tau_{-t} \circ \tau_t = \mE_0 \circ \tau_t.
\ee

Let $\varphi: H \to \R$ be a Lipschitz function with $\|\varphi\|_{\Lip} \leq 1$. Notice that $\varphi \circ \mE_0 \circ W_+: (B_{X_+}(\rho), d_0^+) \to \R$ is a Lipschitz function with
\be\label{ineqphiE0W+Lipsct}
 \|\varphi \circ \mE_0 \circ W_+\|_{\Lip} \leq (8 \beta)^{1/2} \nu (1 + 2 \rho).
\ee
Indeed, by the definition of $d_0^+$, it follows that
\be\label{ineqpropd0+}
 \|\mE_0(\bu_1) - \mE_0(\bu_2)\|_{L^2} \leq \nu(1 + 2 \rho) d_0^+(\bu_1,\bu_2) \quad \forall \bu_1, \bu_2 \in B_{X_+}(\rho).
\ee
Thus, \eqref {ineqphiE0W+Lipsct} follows from \eqref{ineqpropd0+} and \eqref{ineqW+Lipschitzb}.

Now, using that $\tau_t \circ W_+ = W_+ \circ \tau_t$ (cf. Theorem \ref{thmpropsW+}, \eqref{thmpropsW+v}), \eqref{rewriteEt} and \eqref{ineqphiE0W+Lipsct}, we obtain
\begin{multline}
 \left| \int_H \varphi(\bu) \rd (\mE_t \circ W_+ \circ \mJ) \mu_{\fTB^+}(\bu) - \int_H \varphi(\bu) \rd (\mE_t\circ W_+\circ \mJ) \eta_{\fTB^+}(\bu) \right|\\
 = \left| \int_{B_{X_+}(\rho)} \varphi \circ \mE_0 \circ W_+(\bv) \rd (\tau_t\circ \mJ) \mu_{\fTB^+}(\bv) \right.\\
 \left.- \int_{B_{X_+}(\rho)} \varphi \circ \mE_0 \circ W_+(\bv) \rd (\tau_t\circ \mJ) \eta_{\fTB^+}(\bv) \right|\\
 \leq (8\beta)^{1/2} \nu (1 + 2 \rho) \Gamma_H( (\tau_t\circ \mJ) \mu_{\fTB^+}, (\tau_t\circ \mJ) \eta_{\fTB^+}).
\end{multline}
But since $\varphi$ is an arbitrary Lipschitz function with $\|\varphi\|_{\Lip} \leq 1$, we conclude that
\begin{multline}\label{ineqkappaH}
 \gamma_H ((\mE_t\circ W_+\circ \mJ) \mu_{\fTB^+}, (\mE_t\circ W_+\circ \mJ) \eta_{\fTB^+}) \\
 \leq (8\beta)^{1/2} \nu (1 + 2 \rho)\Gamma_H( (\tau_t\circ \mJ) \mu_{\fTB^+}, (\tau_t\circ \mJ) \eta_{\fTB^+}).
\end{multline}
Therefore, \eqref{cordetquantmeasres} follows from \eqref{rewritekappaH}, by using item \eqref{thmDAmeasTBii} of Theorem \ref{thmDAmeasTB}, \eqref{ineqkappaH} and hypothesis \eqref{cordetquantmeashyp}.
\end{proof}

\section*{Acknowledgements}
The authors would like to thank Prof. Ricardo Rosa for the stimulating and helpful discussions. The work of A.B. was supported in part by the NSF grant DMS 1517027, that of C.F. was supported in part by the NSF grant DMS 1516866 and the ONR grant  N00014-15-1-2333. The work of C.F.M. and E.S.T. was partially supported by the ONR grant N00014-15-1-2333.

\setcounter{equation}{0}
\appendix

\section*{Appendix}
\renewcommand{\thesection}{A}
\renewcommand{\theequation}{{A.}\arabic{equation}}

We now show that the example of interpolant operator concerning nodal values, given in \eqref{Jnodes}, is a Type II interpolant operator satisfying the stronger property \eqref{type2b}. The proof follows from a slightly modified version of Proposition 4 in \cite{AzouaniOlsonTiti2014}, whose proof is given in \cite[Appendix]{JonesTiti1993}. In fact, the result below follows the same steps done in \cite[Appendix]{JonesTiti1993}, modulo an application of Young's inequality. We present it here for completeness.

\begin{prop}
 Let $l > 0$ and $Q$ be the square $[0,l] \times [0,l] \subset \R^2$. Then, for every $\varphi \in H^2(Q)$ and $(x_1,x_2), (y_1,y_2) \in Q$, we have
 \be\label{propAppendix}
  |\varphi(x_1,x_2) - \varphi(y_1,y_2)| \leq 2 \left( \|\nabla \varphi\|_{L^2(Q)}^2 + \sqrt{2} l\|\nabla \varphi\|_{L^2(Q)} \left\| \frac{\partial^2 \varphi}{\partial x \partial y} \right\|_{L^2(Q)} \right)^{1/2}
 \ee
\end{prop}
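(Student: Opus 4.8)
The plan is to reduce the two-dimensional oscillation estimate to a one-dimensional Agmon-type inequality applied slice by slice. Writing $\varphi_x$, $\varphi_y$, $\varphi_{xy}$ for the corresponding partial derivatives (so $\varphi_{xy}=\partial^2\varphi/\partial x\partial y$) and abbreviating $\mathbf{x}=(x_1,x_2)$, $\mathbf{y}=(y_1,y_2)$, I would first represent the difference along an $L$-shaped path -- horizontally at height $x_2$, then vertically at abscissa $y_1$ -- to obtain the exact identity $\varphi(x_1,x_2)-\varphi(y_1,y_2)=\int_{y_1}^{x_1}\varphi_x(s,x_2)\,\rd s+\int_{y_2}^{x_2}\varphi_y(y_1,t)\,\rd t$. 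The difficulty, which is the whole point of the estimate, is that the two integrands are traces of first derivatives on lines of measure zero, hence not controlled by the $L^2(Q)$ norms appearing on the right-hand side; the mixed derivative $\varphi_{xy}$ is exactly what will convert these line traces into genuine $L^2(Q)$ quantities.

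The main tool I would establish is the elementary one-dimensional inequality $\sup_{[0,l]}|h|^2\le\frac1l\|h\|_{L^2(0,l)}^2+2\|h\|_{L^2(0,l)}\|h'\|_{L^2(0,l)}$ for $h\in H^1(0,l)$, obtained by averaging $h(x)^2-h(\xi)^2=\int_\xi^x 2h\,h'\,\rd r$ over $\xi\in[0,l]$ and applying Cauchy--Schwarz. Applying this in the variable $\eta$ to the slice $\eta\mapsto\varphi_x(s,\eta)$, whose weak derivative is $\varphi_{xy}(s,\cdot)$ -- the required slice regularity for a.e.\ fixed $s$ being furnished by Fubini since $\varphi\in H^2(Q)$ -- then integrating in $s$ and using Cauchy--Schwarz once more, I obtain $\int_0^l\varphi_x(s,x_2)^2\,\rd s\le\frac1l\|\varphi_x\|_{L^2(Q)}^2+2\|\varphi_x\|_{L^2(Q)}\|\varphi_{xy}\|_{L^2(Q)}$. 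Since $[y_1,x_1]\subset[0,l]$, a final Cauchy--Schwarz in $s$ gives $\int_{y_1}^{x_1}|\varphi_x(s,x_2)|\,\rd s\le(\|\varphi_x\|_{L^2(Q)}^2+2l\|\varphi_x\|_{L^2(Q)}\|\varphi_{xy}\|_{L^2(Q)})^{1/2}$, and the symmetric argument -- Agmon now in the first variable applied to the slice $\xi\mapsto\varphi_y(\xi,t)$, using $\partial_\xi\varphi_y=\varphi_{xy}$ -- yields the analogous bound for the second line integral with $\varphi_y$ replacing $\varphi_x$.

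To conclude I would combine the two pieces. With $a=\|\varphi_x\|_{L^2(Q)}$, $b=\|\varphi_y\|_{L^2(Q)}$, $B=\|\varphi_{xy}\|_{L^2(Q)}$, the estimate so far reads $|\varphi(\mathbf{x})-\varphi(\mathbf{y})|\le(a^2+2laB)^{1/2}+(b^2+2lbB)^{1/2}$. This is where Young's inequality enters, the one step departing from the computation in the cited appendix: using $(\sqrt{P}+\sqrt{Q})^2\le2(P+Q)$ together with $a+b\le\sqrt{2}(a^2+b^2)^{1/2}$, the right-hand side is bounded by $(2(a^2+b^2)+4\sqrt{2}\,lB(a^2+b^2)^{1/2})^{1/2}$, which is at most $2(\|\nabla\varphi\|_{L^2(Q)}^2+\sqrt{2}\,l\|\nabla\varphi\|_{L^2(Q)}\|\varphi_{xy}\|_{L^2(Q)})^{1/2}$ since $a^2+b^2=\|\nabla\varphi\|_{L^2(Q)}^2$; this is exactly \eqref{propAppendix}. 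I expect the slice-wise Agmon step to be the crux: it is what simultaneously removes the line traces and produces the cross term $\|\nabla\varphi\|_{L^2(Q)}\|\varphi_{xy}\|_{L^2(Q)}$ under the square root, whereas bounding the $\varphi_{xy}$ contribution more crudely (e.g.\ through its $L^1(Q)$ norm) would only give an additive term of size $l\|\varphi_{xy}\|_{L^2(Q)}$ and would miss the sharper stated inequality.
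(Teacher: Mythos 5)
Your proof is correct and follows essentially the same route as the paper's: an L-shaped path decomposition, a trace-type estimate bounding $\int_0^l \varphi_x(s,x_2)^2\,\rd s$ by $\|\varphi_x\|_{L^2(Q)}^2$ plus the cross term $l\,\|\varphi_x\|_{L^2(Q)}\|\varphi_{xy}\|_{L^2(Q)}$, a Cauchy--Schwarz on each line integral, and the final combination via $(\sqrt{P}+\sqrt{Q})^2\le 2(P+Q)$ and $a+b\le\sqrt{2}(a^2+b^2)^{1/2}$ (the ``Young's inequality'' step the paper alludes to). The only differences are cosmetic: you obtain the trace bound from a slice-wise one-dimensional Agmon inequality proved by averaging over the base point (which even yields the constant $\tfrac{1}{l}$ in place of the paper's $\tfrac{2}{l}$ coming from its half-square argument), and you handle the $H^2$ regularity by Fubini on slices, whereas the paper proves everything for $C^\infty(Q)$ and concludes by density.
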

\begin{proof}
 First, consider $\psi = \psi(x,y) \in C^\infty(Q)$ and let $\tilde{y} \in [0,l]$. Without loss of generality, assume that $\tilde{y}$ is closer to $0$ than $l$, i.e., $l - \tilde{y} \geq \tilde{y}$. Notice that, for every $x \in [0,l]$ and $y \in [\tilde{y},l]$, we have
 \[
  \psi^2(x,\tilde{y}) = \psi^2(x,y) - \int_{\tilde{y}}^y \frac{\partial}{\partial y}\psi^2(x,s) \rd s
 \]
 Integrating with respect to $x$ and $y$ over $[0,l]\times[\tilde{y},l]$ and applying Cauchy-Schwarz inequality, it follows that
 \[
  (l - \tilde{y}) \int_0^l |\psi(x,\tilde{y})|^2 \rd x \leq \|\psi\|_{L^2(Q)}^2 + 2(l - \tilde{y}) \|\psi\|_{L^2(Q)} \left\| \frac{\partial \psi}{\partial y}\right\|_{L^2(Q)}.
 \]
 But since $l - \tilde{y} \geq \tilde{y}$, then, in particular, $l - \tilde{y} \geq l/2 > 0$. Therefore,
 \be\label{prelimineq}
   \int_0^l |\psi(x,\tilde{y})|^2 \rd x \leq \frac{2}{l}\|\psi\|_{L^2(Q)}^2 + 2 \|\psi\|_{L^2(Q)} \left\| \frac{\partial \psi}{\partial y}\right\|_{L^2(Q)}.
 \ee
 In case $l - \tilde{y} < l$, we consider $y \in [0,\tilde{y}]$ instead and proceed analogously as above, so that \eqref{prelimineq} is valid for every $\tilde{y} \in [0,l]$. Moreover, since $\psi \in C^\infty(Q)$ is arbitrary, we conclude, by density, that \eqref{prelimineq} is also valid for every $\psi \in H^1(Q)$.

 Now, let $\varphi \in C^\infty(Q)$ and $(x_1,y_1), (x_2,y_2) \in Q$. By triangle inequality,
 \be\label{triangineq}
   |\varphi(x_1,x_2) - \varphi(y_1,y_2)| \leq |\varphi(x_1,y_1) - \varphi(x_2,y_1)| + |\varphi(x_2,y_1) - \varphi(x_2,y_2)|.
 \ee
 Notice that
 \[
  |\varphi(x_1,y_1) - \varphi(x_2,y_1)| = \left|\int_{x_1}^{x_2} \frac{\partial\varphi}{\partial x}(s,y_1) \rd s \right| \leq l^{1/2} \left\| \frac{\partial \varphi}{\partial x}(\cdot,y_1)\right\|_{L^2(Q)}.
 \]
 Thus, using \eqref{prelimineq} with $\psi = \partial \varphi/\partial x$, it follows that
 \be\label{triangineq1}
  |\varphi(x_1,y_1) - \varphi(x_2,y_1)| \leq \left( 2 \left\| \frac{\partial \varphi}{\partial x}\right\|^2_{L^2(Q)} + 2 l \left\| \frac{\partial \varphi}{\partial x}\right\|_{L^2(Q)} \left\| \frac{\partial \varphi}{\partial y \partial x}\right\|_{L^2(Q)} \right)^{1/2}.
 \ee
 Analogously, one can prove that
 \be\label{triangineq2}
  |\varphi(x_2,y_1) - \varphi(x_2,y_2)| \leq \left( 2 \left\| \frac{\partial \varphi}{\partial y}\right\|^2_{L^2(Q)} + 2 l \left\| \frac{\partial \varphi}{\partial y}\right\|_{L^2(Q)} \left\| \frac{\partial \varphi}{\partial x \partial y}\right\|_{L^2(Q)} \right)^{1/2}.
 \ee
 Plugging estimates \eqref{triangineq1} and \eqref{triangineq2} into \eqref{triangineq} and using the density of $C^\infty(Q)$ in $H^2(Q)$, we conclude \eqref{propAppendix}.
\end{proof}

In \cite[Proposition 8]{AzouaniOlsonTiti2014}, it is proved that the interpolant operator given in \eqref{Jnodes} satisfies \eqref{type2}. Now, the proof that this interpolant operator satisfies the stronger property \eqref{type2b} follows the same steps, but using \eqref{propAppendix} instead. We refer the reader to \cite{AzouaniOlsonTiti2014} for further details.

\end{document}